\numberwithin{equation}{section}
\begin{document}

\newtheorem{theorem}{Theorem}[section] 
\newtheorem{proposition}[theorem]{Proposition}
\newtheorem{corollary}[theorem]{Corollary}
\newtheorem{lemma}[theorem]{Lemma}

\theoremstyle{definition}
\newtheorem{assumption}[theorem]{Assumption}
\newtheorem{definition}[theorem]{Definition}

\theoremstyle{definition} 
\newtheorem{remark}[theorem]{Remark}
\newtheorem{remarks}[theorem]{Remarks}
\newtheorem{example}[theorem]{Example}
\newtheorem{examples}[theorem]{Examples}
\newenvironment{pf}%
{\begin{sloppypar}\noindent{\bf Proof.}}%
{\hspace*{\fill}$\square$\vspace{6mm}\end{sloppypar}}
\def\mE{{\mathbb E}}
\def\mF{{\mathbb F}}
\def\mG{{\mathbb G}}
\def\mL{{\mathbb L}}
\def\mX{{\mathbb X}}
\def\mP{{\mathbb P}}
\def\R{{\mathbb R}}
\def\N{{\mathbb N}}
\def\C{{\mathbb C}}
\def\Q{{\mathbb Q}}
\def\Z{{\mathbb Z}}
\def\mH{\mathbb H}
\def\mA{\mathbb A}
\def\mT{\mathbb T}
\def\cD{{\mathcal D}}
\def\cB{{\mathcal B}}
\def\cE{{\mathcal E}}
\def\cF{{\mathcal F}}
\def\cA{{\mathcal A}}
\def\cH{{\mathcal H}}
\def\cS{{\mathcal S}}
\def\cT{{\mathcal T}}
\def\cP{{\mathcal P}}
\def\cL{{\mathcal L}}
\def\cK{{\mathcal K}}
\def\cJ{{\mathcal J}}
\def\cR{{\mathcal R}}
\def\cM{{\mathcal M}}
\def\bH{{\bf H}}
\def\bT{{\bf T}}
\def\tT{{\widetilde{T}}}
\def\im{{\mathrm{im}}}

\def\vp{{\varphi}}
\def\eps{\varepsilon}
\def\hA{\widehat{A}}

\def\supp{{\mathrm{supp}}}
\def\esssup{{\mathrm{ess\,sup}}}
\def\Re{{\mathrm{Re}}}
\def\Im{{\mathrm{Im}}}
\def\hperp{{^{_\perp}}}
\def\hW{\widehat{W}}
\newcommand{\essinf}[1]{{\mathrm{ess}}\!\inf_{\!\!\!\!\!\!\!\!\! #1}}
\def\div{{\mathrm {div\,}}}

\def\hookd{\stackrel{_d}{\hookrightarrow}}
\def\hook{{\hookrightarrow}}
\def\la{{\langle}}
\def\lla{\left\langle}
\def\ra{{\rangle}}
\def\rra{\right\rangle}
\def\sL{{\mathscr L}}
\def\sLis{{\mathscr L}_{is}}
\def\vp{{\varphi}}

\newcommand{\trace}[1]{{\langle {#1} \rangle}_\bullet}
\newcommand{\jump} [1]{{\langle \! \langle {#1} \rangle \! \rangle}_\bullet}

\hyphenation{Lipschitz}

\newcommand{\narrowarray}{\setlength{\arraycolsep}{0.2em}}
\newcommand{\normalarray}{\setlength{\arraycolsep}{0.4em}}
\newcommand{\widearray}  {\setlength{\arraycolsep}{0.6em}}
\narrowarray

\sloppy
\title
	[Stokes Equations on a Wedge Subject to Navier Boundary Conditions]
	{Optimal regularity \\ for the Stokes Equations on a 2D Wedge Domain \\ Subject to Navier Boundary Conditions}


\author[M.\ K\"ohne]{Matthias K\"ohne}
\address{Heinrich-Heine-Uni\-ver\-sit\"at D\"usseldorf,
	Mathematisch-Naturwissenschaftliche Fakult\"at,
	Mathematisches Institut}
\email{matthias.koehne@hhu.de}

\author[J. Saal]{J\"urgen Saal}
\address{Heinrich-Heine-Uni\-ver\-sit\"at D\"usseldorf,
	Mathematisch-Naturwissenschaftliche Fakult\"at,
	Mathematisches Institut}
\email{juergen.saal@hhu.de}

\author[L.\ Westermann]{Laura Westermann}
\address{Heinrich-Heine-Uni\-ver\-sit\"at D\"usseldorf,
	Mathematisch-Naturwissenschaftliche Fakult\"at,
	Mathematisches Institut}
\email{laura.westermann@hhu.de}

\thispagestyle{empty}
\parskip0.5ex plus 0.5ex minus 0.5ex

\begin{abstract}
We consider the Stokes equations subject to Navier boundary conditions on a two-dimensional wedge domain
with opening angle $\theta_0 \in (0,\,\pi)$.
We prove existence and uniqueness of solutions with optimal regularity in an $L^p$-setting.
The results are based on optimal regularity results for the Stokes equations subject to perfect slip boundary
conditions on a two-dimensional wedge domain that have been obtained by the authors in \cite{Koehne-Saal-Westermann:Stokes-Wedge}.
Based on a detailed study of the corresponding trace operator on anisotropic Sobolev-Slobodeckij type function spaces
on a two-dimensional wedge domain we are able to generalize the results proved in \cite{Koehne-Saal-Westermann:Stokes-Wedge}
to the case of inhomogeneous boundary conditions.
Existence and uniqueness of solutions to the Stokes equations subject to (inhomogeneous) Navier boundary conditions
are then obtained using a perturbation argument.
\end{abstract}
\maketitle
{\footnotesize
\noindent {\bf Keywords.} Stokes equations, Navier boundary condition, wedge domain \\[0.5em]
\noindent {\bf 2000 Mathematics Subject Classification.}
Primary: 35Q30; Secondary: 76D03, 76D05, 35K67
}

{\renewcommand\thefootnote{}\footnote{\today}\addtocounter{footnote}{-1}}


\section{Introduction and Main Result}
\label{sec_intro}

The main objective of this note is to study the (instationary) Stokes equations subject to (inhomogeneous) Navier boundary conditions
\begin{equation}
	 \label{problem_navierslipbc}
	 	\begin{array}{r@{\ =\ }lll}
	 		 \partial_t u - \Delta u + \nabla p &  f &
			\quad \text{in} & J \times G, \\[0.25em]
			\div u & g & \quad \text{in} & J \times G, \\[0.25em]
			\alpha u \cdot \tau - \tau^T D_\pm (u) \nu & h_1& \quad \text{on} & J \times \Gamma, \\[0.25em]
	 		u \cdot \nu & h_0 & \quad \text{on} & J \times \Gamma,\\[0.25em]
			u(0) & u_0 & \quad \text{in} & G
	 	\end{array}
\end{equation}
on a two-dimensional wedge domain $G$.
We aim at existence and uniqueness of solutions with optimal regularity in an $L^p$-setting for $p \in (1,\infty)$.
The wedge domain is defined as
\begin{equation}
	\label{def_wedge}
	G:= \Big\{ \, (x_1, x_2) = (r \cos \theta, r \sin \theta): \ r > 0,\ 0 < \theta < \theta_0 \, \Big\} \subseteq \R^2
\end{equation}
with opening angle $ \theta_0 \in (0, \pi)$ and $J=(0, T)$ with $T>0$.
Here $\alpha$ is a given (variable) parameter, $\nu$ and $\tau$ denote the unit outer normal vector and a unit tangential vector on $\Gamma := \partial G \setminus \{0 \}$ respectively.
We have $\nu_1 = - e_2$ and set $\tau_1 := -e_1$ as the unit outer normal vector and a unit tangential vector on $\Gamma_1 := (- \infty,0) \cdot \tau_1$.
Furthermore, we have $\nu_2 = (-\sin \theta_0, \cos \theta_0)^T$ and set $\tau_2 := (\cos \theta_0, \sin \theta_0)^T$ as the unit normal vector and a unit tangential vector on $\Gamma_2 := (0, \infty) \cdot \tau_2$.
Thus, the boundary of $G$ is decomposed as
\begin{equation}\label{def_boundary}
\Gamma = \Gamma_1 \cup \Gamma_2 = \partial G \setminus \{ 0\}
\end{equation}
and we have $(\tau,\,\nu) = (\tau_j,\,\nu_j)$ on $\Gamma_j$ for $j = 1,\,2$.
Note that $(\tau_j,\,\nu_j)$ is positively oriented for $j = 1,\,2$.
The boundary conditions in the third and fourth equation of system \eqref{problem_navierslipbc} have to be understood as
\begin{align*}
	\alpha u \cdot \tau_1 - \tau^T_1 D_\pm(u) \nu_1 &= h^{(1)}_1 \ \text{on} \ J \times \Gamma_1, \\
	\alpha u \cdot \tau_2 - \tau^T_2 D_\pm(u) \nu_2 &= h^{(2)}_1 \ \text{on} \ J \times \Gamma_2, \\ 
	u \cdot \nu_1 &= h^{(1)}_0 \ \text{on} \ J \times \Gamma_1, \\
	u \cdot \nu_2 &= h^{(2)}_0 \ \text{on} \ J \times \Gamma_2,
\end{align*}
where $h^{(j)}_\ell := h_\ell|_{\Gamma_j}$ for $\ell = 0,\,1$ and $j = 1,\,2$.
Moreover, $ D_\pm (u):= \frac{1}{2} (\nabla u \pm \nabla u^T)$ denote the rate of deformation tensor and the rate of rotation tensor, respectively.

If $\psi: G \longrightarrow \R$ or $\psi: \Gamma \longrightarrow \R$ is a function, we denote
by $\langle \psi \rangle_j := \lim_{x \rightarrow 0} \psi|_{\Gamma_j}(x)$ its trace at the corner $x = 0$ of the wedge $G$
taken w.\,r.\,t.\ its values on $\Gamma_j$ for $j = 1,\,2$, whenever it exists.
By $\langle\!\langle \psi \rangle\!\rangle_\bullet := \langle \psi \rangle_2 - \langle \psi \rangle_1$ we denote
its {\itshape jump} across the corner, whenever the two traces exist.
Finally, we denote by $\trace{\psi} := \langle \psi \rangle_1 = \langle \psi \rangle_2$
its unique trace at the corner, provided that $\jump{\psi} = 0$.
Thus, a condition like $\trace{\psi} = 0$ implicitly requires $\jump{\psi} = 0$.

We aim at solutions
\begin{equation}
	\label{solution_class}
	(u,\,p) \in \mE := \mE_u \times \mE_p,
\end{equation}
where
\begin{align*}
\mE_u&:= W^{1,p}(J, L^p(G, \R^2))\cap L^p(J, W^{2,p}(G, \R^2))\\
\mE_p&:=L^p(J, \widehat{W}^{1,p}(G))
\end{align*}
are given as anisotropic (homogeneous) Sobolev spaces; see~Section~\ref{sec_not}.
Of course, in this setting uniqueness of the pressure $p$ has to be understood as uniqueness up to an additive constant.
Then, necessarily, the given data in \eqref{problem_navierslipbc} have to satisfy the regularity conditions
\begin{equation*}
	\begin{array}{rclcl}
		  f & \in & \mF_f          & := & L^p(J, L^p(G, \R^2)), \\[0.25em]
		  g & \in & \mF_g          & := & W^{1/2}_p(J, L^p(G))\cap L^p(J, W^{1, p}(G)), \\[0.25em]
		h_1 & \in & \mF_\tau       & := & \{\,h: \Gamma \longrightarrow \R\,:\,h|_{\Gamma_j} \in \mF^{(j)}_\tau\ \textrm{for}\ j = 1,\,2\,\},\ \textrm{where} \\[0.25em]
		    &     & \mF^{(j)}_\tau & := & W^{1/2 - 1/2p}_p(J, L^p(\Gamma_j)) \cap L^p(J, W^{1 - 1/p}_p(\Gamma_j)),\ j = 1,\,2, \\[0.25em]
		h_0 & \in & \mF_\nu        & := & \{\,h: \Gamma \longrightarrow \R\,:\,h|_{\Gamma_j} \in \mF^{(j)}_\nu\ \textrm{for}\ j = 1,\,2\,\},\ \textrm{where} \\[0.25em]
		    &     & \mF^{(j)}_\nu  & := & W^{1 - 1/2p}_p(J, L^p(\Gamma_j)) \cap L^p(J, W^{2-1/p}_p(\Gamma_j)),\ j = 1,\,2, \\[0.25em]
		u_0 & \in & \mF_0          & := & W^{2 - 2/p}_p(G, \R^2),
	\end{array}
\end{equation*}
i.\,e.\ we have to work with anisotropic Sobolev-Slobodeckij spaces; see~Section~\ref{sec_not}.
For convenience we abbreviate
\begin{equation}
	\label{data_class}
	\mF := \mF_f \times \mF_g \times \mF_\tau \times \mF_\nu \times \mF_0.
\end{equation}
We employ the space $BUC^1(\Gamma) := \{\,\alpha: \Gamma \longrightarrow \R\,:\,\alpha|_{\Gamma_j} \in BUC^1(\Gamma_j),\ j = 1,\,2\,\}$ for the coefficients.
Besides the obvious necessary compatibility conditions between the right-hand side $g$ in the divergence equation and the initial datum $u_0$
and between the boundary datum $h_j$ and the initial datum $u_0$, respectively,
there is a somewhat hidden but well-known necessary compatibility condition between $g$ and the normal boundary datum $h_0$.
To formulate this compatibility condition we denote by $p' \in (1,\infty)$ the dual exponent of $p \in (1,\infty)$
and define the functional $F(\gamma, \eta): W^{1, p'}(G) \longrightarrow \R$ for $\gamma \in \mF_g$ and $\eta \in \mF_\nu$ as
\begin{equation}
	\label{funktional}
	[F(\gamma, \eta)](\phi) := \left(\eta,\,\phi\right)_{\Gamma} - (\gamma,\,\phi)_G, \qquad \phi \in W^{1,p'}(G).
\end{equation}
Since
\begin{equation*}
	\begin{array}{rcl}
		[F(g, h_0)](\phi) & = & \left(h_0,\,\phi\right)_{\Gamma} - (g,\,\phi)_G = (u \cdot \nu,\,\phi)_{\Gamma} - (\div u,\,\phi)_G \\[0.75em]
			& = & (u,\,\nabla \phi)_G \in W^{1,p}(J), \qquad \phi \in W^{1, p'}(G),
	\end{array}
\end{equation*}
we infer that
\begin{equation*}
	F(g, h_0) \in W^{1,p}\big(J, (W^{1,p'}(G), \|\nabla \cdot\|_{L^{p'}(G, \R^2)})'\big).
\end{equation*}
By the fact that $C_c^\infty(\bar{G})$ is dense in $\widehat{W}^{1,p'}(G)$
it follows that $F(g, h_0)  \in W^{1,p}(J,\widehat{W}^{-1,p}(G))$;
cf.\ Corollary~\ref{Homogeneous-Density-Wedge}.

\begin{remark}\label{rem_compat}
For $g \in \mF_g$ the requirement $F(g, 0) \in W^{1,p}(J,\widehat{W}^{-1,p}(G))$
is equivalent to $g \in W^{1,p}(J,\widehat{W}^{-1,p}(G))$,
while for $h_0 \in \mF_\nu$ the requirement $F(0, h_0) \in W^{1,p}(J,\widehat{W}^{-1,p}(G))$
is equivalent to $h_0|_{\Gamma_j} \in W^{1, p}(J, \widehat{W}^{-1 / p}_p(\Gamma_j))$ for $j = 1,\,2$.
\end{remark}
\noindent
Now, our main result reads as follows.

\begin{theorem}\label{thm_navierslip}
Let $J = (0,T)$ with $0 < T < \infty$ and let $G \subset \R^2$ be defined as in \eqref{def_wedge} with $\theta_0 \in (0,\,\pi)$.
Let $p \in (1, \infty) \setminus \{ \frac{2 \theta_0}{3 \theta_0- \pi},\ \frac{2 \theta_0}{3 \theta_0 - 2\pi},\ \frac{3}{2},\ 2,\ 3\, \}$.
Let $\alpha \in BUC^1(\Gamma)$ with $\trace{\alpha} = 0$.
Suppose the data satisfy the regularity condition
\begin{equation*}
(f,\ g,\ h_1,\ h_0,\ u_0) \in \mF
\end{equation*}
and the compatibility conditions
\begin{equation*}
	\begin{array}{rcll}
		\mbox{div}\,u_0 & = & g|_{t=0}, & \quad \textrm{if}\ p > 2 ,\\[0.5em]
		u_0 \cdot \nu & = & h_0|_{t=0}, & \quad \textrm{if}\ p> \frac{3}{2}, \\[0.5em]
		\alpha u_0 \cdot \tau - \tau^T D_{\pm} (u_0) \nu & = & h_1|_{t=0}, & \quad \textrm{if}\ p> 3,
	\end{array}
\end{equation*}
as well as
\begin{equation*}
	F(g, h_0) \in W^{1,p}(J, \widehat{W}^{-1,p}(G)).
\end{equation*}
If the boundary condition is posed based on $D_+$,
then assume the compatibility conditions $\jump{\partial_\tau h_0 + h_1} = 0$ in $J$, if $p > 2$, and
\begin{equation*}
	{\textstyle \frac{1}{2}} \langle \partial_{\tau_1} h_0 \rangle_1 + {\textstyle \frac{1}{2}} \langle \partial_{\tau_2} h_0 \rangle_2 = \trace{\partial_\tau h_0 + h_1} \quad \textrm{in}\ J,
		\qquad \textrm{if}\ \theta_0 = {\textstyle \frac{\pi}{2}}\ \textrm{and}\ p > 2.
\end{equation*}
If the boundary condition is posed based on $D_-$,
then assume the compatibility conditions $\jump{h_1} = 0$ in $J$, if $p > 2$, and
\begin{equation*}
	- {\textstyle \frac{1}{2}} \langle \partial_{\tau_1} h_0 \rangle_1 - {\textstyle \frac{1}{2}} \langle \partial_{\tau_2} h_0 \rangle_2 = \trace{h_1} \quad \textrm{in}\ J,
		\qquad \textrm{if}\ \theta_0 = {\textstyle \frac{\pi}{2}}\ \textrm{and}\ p > 2.
\end{equation*}
Then there exists a unique solution $(u,p) \in \mE$ to \eqref{problem_navierslipbc}.
\end{theorem}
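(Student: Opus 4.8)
The plan is to follow the route sketched in the abstract: bootstrap from the optimal-regularity result for the \emph{perfect-slip} problem (the case $\alpha=0$ with homogeneous boundary data) of \cite{Koehne-Saal-Westermann:Stokes-Wedge}, first to inhomogeneous boundary data via a careful study of the trace operator on the wedge, and then to Navier conditions via a perturbation in $\alpha$. Concretely, I would assemble three pieces. (a) A bounded right inverse of the divergence/normal-trace pair: a map $(g,h_0)\mapsto(v_0,q_0)\in\mE$ with $\div v_0=g$ in $G$ and $v_0\cdot\nu=h_0$ on $\Gamma$, available precisely when $F(g,h_0)\in W^{1,p}(J,\widehat{W}^{-1,p}(G))$ — this is the solvability condition of the underlying weak divergence problem, cf.\ Remark~\ref{rem_compat}, with the duality made rigorous by Corollary~\ref{Homogeneous-Density-Wedge}. (b) A bounded right inverse of the perfect-slip tangential trace operator $(v,q)\mapsto-\tau^TD_\pm(v)\nu|_\Gamma$ on divergence-free, normal-trace-free fields over $G$; combined with \cite{Koehne-Saal-Westermann:Stokes-Wedge} this yields a bounded solution operator $\cL\colon(\tilde f,\varphi,\tilde u_0)\mapsto(u,p)\in\mE$ for the \emph{inhomogeneous} perfect-slip problem, defined for $\tilde f\in\mF_f$, $\tilde u_0\in\mF_0$ (satisfying the homogeneous compatibility conditions) and tangential datum $\varphi\in\mF_\tau$ with $\jump{\varphi}=0$ plus the corresponding companion condition when $\theta_0=\tfrac{\pi}{2}$. (c) The perturbation argument in $\alpha$.

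\textbf{Reduction.} Subtracting $(v_0,q_0)$ from $(u,p)$ turns \eqref{problem_navierslipbc} into a problem for $w=u-v_0$ with $\div w=0$, $w\cdot\nu=0$, a modified right-hand side $\tilde f\in\mF_f$, a modified initial value $\tilde u_0=u_0-v_0(0)\in\mF_0$ still satisfying the homogeneous compatibility conditions, and modified tangential datum $\tilde h_1=h_1+\tau^TD_\pm(v_0)\nu-\alpha\,v_0\cdot\tau\in\mF_\tau$. A computation at the corner shows that for any sufficiently regular field the jump of the rotational part $\tau^TD_-(v)\nu$ vanishes at $x=0$; hence the compatibility conditions $\jump{\partial_\tau h_0+h_1}=0$ (for $D_+$), resp.\ $\jump{h_1}=0$ (for $D_-$), together with the distinguished identities imposed when $\theta_0=\tfrac{\pi}{2}$, are exactly what forces $\jump{\tilde h_1}=0$ (and the associated second-order corner relation), i.e.\ $\tilde h_1$ lies in the domain of $\cL$. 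Writing the boundary condition for $w$ as $-\tau^TD_\pm(w)\nu=\tilde h_1-\alpha\,w\cdot\tau$, the problem becomes the fixed-point equation $w=\cL\big(\tilde f,\,\tilde h_1-\alpha\,w\cdot\tau,\,\tilde u_0\big)$; the hypothesis $\trace{\alpha}=0$ forces $\jump{\alpha\,w\cdot\tau}=0$, so the argument of $\cL$ stays in its domain throughout the iteration.

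\textbf{Perturbation in $\alpha$.} I would subtract the $\alpha=0$ solution $\cL(\tilde f,\tilde h_1,\tilde u_0)$ and write $w=\cL(\tilde f,\tilde h_1,\tilde u_0)+\omega$, so that $\omega(0)=0$ and $\omega=\cL\big(0,\,-\alpha\,\omega\cdot\tau,\,0\big)+\omega_\ast$ with a fixed $\omega_\ast\in\mE$. Because the boundary trace of any $\omega\in\mE_u$ lies in $\mF_\nu$, which embeds into $\mF_\tau$ with a gain of one half time-derivative, and because $\omega(0)=0$ and $\alpha\in BUC^1(\Gamma)$, the linear map $\omega\mapsto\cL(0,-\alpha\,\omega\cdot\tau,0)$ has operator norm of order $(T')^{\delta}$, for some $\delta>0$, on the spaces over a subinterval $(0,T')$; for $T'$ small a Neumann series converges and the contraction principle gives the unique solution on $(0,T')$. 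Since the trace $\omega(kT')\in\mF_0$ is an admissible initial value at each step, finitely many iterations in time cover $J=(0,T)$. Uniqueness follows by running the same reduction on the difference of two solutions, which then solves the homogeneous perfect-slip problem and hence vanishes by the uniqueness part of \cite{Koehne-Saal-Westermann:Stokes-Wedge}.

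\textbf{Main obstacle.} The decisive step will be piece (b): constructing the right inverse of the tangential trace operator on the anisotropic Sobolev--Slobodeckij spaces over the wedge and determining the corner compatibility conditions exactly; securing the full anisotropic regularity of the divergence lift in (a) — in particular the $W^{1,p}(J,L^p(G,\R^2))$ part in time — while only assuming $F(g,h_0)\in W^{1,p}(J,\widehat{W}^{-1,p}(G))$ is a second delicate point. The excluded exponents $\tfrac32,2,3$ are the thresholds at which the temporal trace at $t=0$ of $g$, $h_0$, $h_1$ ceases to exist continuously, so that the corresponding compatibility conditions change character; the exponents $\tfrac{2\theta_0}{3\theta_0-\pi}$, $\tfrac{2\theta_0}{3\theta_0-2\pi}$ and the angle $\theta_0=\tfrac{\pi}{2}$ are resonances of the Mellin/spectral analysis of the Stokes operator at the corner inherited from \cite{Koehne-Saal-Westermann:Stokes-Wedge}, where singular exponents collide with the scaling relevant in the $L^p$-setting. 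Both phenomena must be excluded for the right inverse to be bounded and for the stated compatibility conditions to be sharp.
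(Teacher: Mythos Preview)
Your proposal is essentially the paper's approach. The paper first establishes the fully inhomogeneous perfect-slip isomorphism (its Corollary~\ref{cor_free_perfect_slip}, built from a trace lift for $(h_0,\textrm{curl}\,u)$, then a divergence lift via the Neumann--Laplace problem, then the homogeneous result from \cite{Koehne-Saal-Westermann:Stokes-Wedge}), and then runs precisely your step~(c): solve the $\alpha=0$ problem with data $(f,g,h_1-\widetilde{h}_1,h_0,u_0)$, where $\widetilde{h}_1$ is a correction with $\widetilde{h}_1|_{t=0}=\alpha u_0\cdot\tau$ and $\trace{\widetilde{h}_1}=0$, reduce to a problem on $_0\mE$ with vanishing initial trace, and invert $1-\mL^{-1}R$ with $R(v,q)=-\alpha v\cdot\tau$ by a Neumann series for small $T$. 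Your ordering differs only in that you remove $(g,h_0)$ before touching the tangential datum, whereas the paper's trace lemma (Corollary~\ref{cor_trace}) lifts $(h_0,\textrm{curl}\,u)$ simultaneously and fixes divergence afterwards; both orderings work, and your corner-compatibility check for $\tilde h_1$ is exactly the computation $\tau^TD_-(v)\nu=\tfrac12\,\textrm{curl}\,v$ (whose jump vanishes) that underlies the paper's reformulation (\ref{probu2curlformplus})--(\ref{probu2curlformminus}).

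One correction to your closing paragraph: the excluded exponents $\tfrac{2\theta_0}{3\theta_0-\pi}$, $\tfrac{2\theta_0}{3\theta_0-2\pi}$ are \emph{not} inherited from \cite{Koehne-Saal-Westermann:Stokes-Wedge}; that reference works for all $p\in(1,\infty)$. They arise here from the spectral condition $\beta_p^2\notin\sigma(T_\theta)$ in the operator-sum treatment of the Neumann--Laplace problem needed for your divergence lift~(a) (see Proposition~\ref{isomorph_op_neum} and Remark~\ref{remark_spectral_condition}), while $p=2$ is excluded by Hardy's inequality in the same construction. Also, $\theta_0=\tfrac{\pi}{2}$ is not an excluded resonance: it is admissible, and merely forces the extra corner identity you list, because for that angle the curl trace at the vertex is determined by $\partial_{\tau_1}h_0,\partial_{\tau_2}h_0$ alone (cf.\ Remark~\ref{rem_trace_curl}).
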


\begin{remark}
The values $p = 2$, $p = \frac{2\theta_0}{3\theta_0 - \pi}$ and $p = \frac{2 \theta_0}{3 \theta_0- 2\pi}$ with $\theta_0 \in (0, \pi)$ are excluded in Theorem~\ref{thm_navierslip} due to technical reasons.
In Section \ref{sectionlaplaceneumann} we solve the Laplace equation subject to Neumann boundary conditions on the wedge domain by transforming this problem into a problem on a layer domain.
The latter is then solved using the operator sum method, which is based on the Kalton-Weis theorem.
Due to this method a condition on the spectrum of the operators appears, which excludes $p = \frac{2\theta_0}{3\theta_0 - \pi}$ and $p = \frac{2 \theta_0}{3 \theta_0- 2 \pi}$.
Moreover, the transformation from the layer back to the wedge introduces weights.
The norms in the corresponding weighted function spaces can be estimated thanks to Hardy's inequality for all $p \in (1, \infty)$ except for $p=2$.
See Lemma \ref{lemma_hardy} for Hardy's inequality on the wedge.
Thanks to the solvability of the Laplace equation we can then prove the solvability of equation \eqref{probu2perfectslip} below,
which is a crucial step for the proof of Theorem~\ref{thm_navierslip}.
The values $p = \frac{3}{2}$ and $p = 3$ are excluded in Theorem~\ref{thm_navierslip} to allow for inhomogeneous right-hand sides,
which leads to the necessity of the stated compatibility conditions.
These two values could be included for the case $g = 0$ and $h_0 = h_1 = 0$.
\end{remark}

Note that literature on the Stokes equations on domains with conical
boundary points is still somewhat rare, in particular for the instationary
system. Results on the stationary Stokes system and on 
pure Dirichlet conditions can be found, e.g., in
\cite{Kondratev:Elliptic-Equations-Conical-Angular-Points, Kellogg-Osborn:Stokes-Convex-Polygon,Dauge:Stationary-Stokes-With-Corners, Mazya-Rossmann:Elliptic-Equations,
Grisvard:Elliptic-Problems, Deuring:Stokes-With-Conical-Boundary-Points, Guo-Schwab:Stokes-Polygonal-Domains, Kozlov-Rossmann:Nonstationary-Stokes-Cone}.
For a Lipschitz approach to Robin type boundary conditions we refer to \cite{Monniaux-Shen:Stokes-Irregular-Domains}. For more ``classical''
approaches to Navier boundary conditions on smooth domains we refer to
\cite{Shibata-Shimada:Stokes-Robin-Resolvent-Estimate, Farwig-Rosteck:Stokes-Navier-Boundary-Conditions, Hobus-Saal:Partial-Slip}, 
just to name a few. For an overview on the Stokes equations
including approaches to various boundary conditions and to non-smooth
domains see \cite{Hieber-Saal:Stokes-Equation}.
This note essentially relies on the approach to perfect slip conditions presented in \cite{Koehne-Saal-Westermann:Stokes-Wedge, Maier-Saal:Stokes-Wedge}.

To provide an outline for the following sections
we summarize the strategy of the proof of Theorem~\ref{thm_navierslip}.
At the end, problem \eqref{problem_navierslipbc} is a perturbed variant of the problem
\begin{equation}\label{probu2perfectslip}
	 	\begin{array}{r@{\ =\ }lll}
	 		 \partial_t u - \Delta u + \nabla p & f &
			\quad \text{in} & J \times G, \\[0.25em]
			\div u & g & \quad \text{in } & J \times G, \\[0.25em]
			- \tau^T D_\pm(u) \nu & h_1 & \quad \text{on} & J \times \Gamma ,\\[0.25em]
	 		u \cdot \nu & h_0 & \quad \text{on} & J \times \Gamma,\\[0.25em]
			u(0) & u_0& \quad \text{in} & G
	 	\end{array}
\end{equation}
with fully inhomogeneous right-hand sides $(f, g, h_1, h_0, u_0) \in \mF$.
Therefore, it is sufficient to show existence and uniqueness of solutions $(u, p) \in \mE$
to problem \eqref{probu2perfectslip}, provided the data satisfy appropriate compatibility conditions.
This is achieved by Corollary~\ref{cor_free_perfect_slip}.

This result, in turn, relies on the unique solvability of the Stokes equations
subject to inhomogeneous perfect slip boundary conditions
\begin{equation*}
	u \cdot \nu = h_0, \qquad \textrm{curl}\,u = h_1 \qquad \textrm{on}\ J \times \Gamma.
\end{equation*}
The latter problem is dealt with in Theorem~\ref{thm_perfect_slip}.
On the one hand, the proof of Theorem~\ref{thm_perfect_slip} relies on the result \cite[Corollary~1]{Koehne-Saal-Westermann:Stokes-Wedge},
which provides optimal regularity for the Stokes equations subject to homogeneous perfect slip boundary conditions in the $L^p$-setting for all $p\in (1, \infty)$.
On the other hand, to cope with the inhomogeneous boundary conditions,
for the proof of Theorem~\ref{thm_perfect_slip} we also need to show optimal regularity for the Laplace equation subject to Neumann boundary conditions
in the space $\widehat{W}^{1,p}(G)$ for all $p \in (1, \infty) \setminus \{ \frac{2 \theta_0}{3 \theta_0- \pi},\ \frac{2 \theta_0}{3 \theta_0 - 2 \pi},\ 2\, \}$.
This is accomplished by Corollary \ref{cor_probl_zeit},
where we show the invertibility of the operator $A_{L,T} \phi:= \Delta \phi$ associated to the problem
\begin{equation}
	 	\begin{array}{r@{\ =\ }lll}
	 		 \Delta \phi &  f  & \quad \text{in} & J \times G, \\[0.25em]
	 		  \partial_\nu \phi  & 0 & \quad \text{on} & J \times \Gamma
	 	\end{array}
\end{equation}
to obtain $ \phi \in L^p(J,K^3_p(G))$ for $f \in L^p(J,\widehat{W}^{1,p}(G))$.
For a definition of the weighted Sobolev space $K^3_p(G)$ see \eqref{def_weighted_sobolevspace} below.

Now, this note is organized as follows.
In Section~\ref{sec_not} we introduce the notation.
Section~\ref{sectionlaplaceneumann} is devoted to the proof of Corollary \ref{cor_probl_zeit},
i.\,e.\ to the treatment of the Laplace equation subject to Neumann boundary conditions in a wedge within the above function spaces.
Finally, in Section~\ref{sec_navierslipbc} we prove the unique solvability of problem \eqref{probu2perfectslip}
and we provide a complete proof of Theorem \ref{thm_navierslip}.
As auxiliary results, we provide several generic trace theorems for the wedge domain $G$ for anisotropic Sobolev-Slobodeckij spaces,
which may be of independent interest.
For convenience this note is complemented by two appendices,
where we discuss Hardy's inequality for the wedge domain $G$
and approximation results in homogeneous Sobolev spaces $\widehat{W}^m_p$ for a large class of unbounded domains.

\section{Notation}\label{sec_not}

Let $X$ be a Banach space, let $1 \leq p \leq \infty$ and let $\Omega\subset \R^2$ be a domain.
We set $C_c^\infty(\Omega, X):= \{u \in C^\infty(\Omega,X): \ \text{supp}(u) \subset\!\!\!\subset \Omega \} $ where $\text{supp}(u)$ is the support of $u$.
We denote by $L^p(\Omega, X)$ the $X$-valued Bochner-Lebesgue space
and we define $W^{k,p}(\Omega, X)$ to be the Sobolev space of order $k \in \N$ and we set $W^{0,p}(\Omega, X):=L^p(\Omega, X)$.
We denote by $W_0^{k,p}(\Omega, X)$ the closure of $C_c^\infty(\Omega, X)$ in the space $W^{k,p}(\Omega, X)$.
Furthermore, for $s=k + \lambda$ with $k \in \N_0$ and $0<\lambda<1$ we define $W^{s}_p(\Omega, X)$ to be the Sobolev-Slobodeckij space
that consists of all functions $u \in W^{k,p}(\Omega, X)$ satisfying
\[ \|u \|_{W^{s}_p(\Omega, X)}:= \| u\|_{W^{k,p}(\Omega, X)} + \sum_{|\alpha |= k} \left( \int_\Omega \int_\Omega \frac{\|\partial^\alpha u(y)- \partial^\alpha u(x)\|^p_X}{|y-x|^{n+ \lambda p}} dy dx \right)^{1/p} < \infty.
\]
For $k \in \N_0$ the homogeneous Sobolev space of scalar valued functions is defined as
\begin{equation*}
	\widehat{W}^{k,p}(\Omega, X):= \{\,u \in L^1_{loc}(\Omega, X): \ \partial^\alpha u \in L^p(\Omega, X), \ |\alpha|=k\,\},
\end{equation*}
and equipped with the seminorm
\begin{equation*}
	\|u\|_{\widehat{W}^{k,p}(\Omega, X)}:= \sum_{|\alpha|=k} \|\partial^\alpha u\|_{L^p(\Omega, X)}.
\end{equation*}
Of course, for $X = \R$ we write $L^p(\Omega)$, $W^{k, p}(\Omega)$, $W^{s}_p(\Omega)$, \dots
for $L^p(\Omega, X)$, $W^{k, p}(\Omega, X)$, $W^{s}_p(\Omega, X)$, \dots, respectively.

Now, let $G \subset \R^2$ be the wedge domain defined in \eqref{def_wedge} with opening angle $\theta_0 \in (0,\,\pi)$.
We define the Kondrat'ev spaces as
\begin{equation*}
	L^p_\gamma(G):= L^p(G, \rho^\gamma d(x_1,x_2)), \quad \rho:=|(x_1,x_2)|, \ \gamma \in \R,
\end{equation*}
and for $m \in \N_0$ as
\begin{equation} \label{def_weighted_sobolevspace}
	K^m_{p,\gamma}(G):= \{\,u \in L^1_{loc}(G): \ \rho^{|\alpha| -m} \partial^\alpha u\in L^p_\gamma(G), \ |\alpha| \leq m\,\}, \quad \gamma \in \R.
\end{equation}
The space $K^m_{p, \gamma}(G)$ equipped with the norm
\begin{equation*}
	 \|u \|_{K^m_{p, \gamma}(G)}:= \left(\sum_{|\alpha| \leq m} \|\rho^{|\alpha| -m} \partial^\alpha u \|^p_{L^p_{\gamma}(G)} \right)^{1/p}
\end{equation*}
is a Banach space for all $m \in \N_0$ and all $\gamma \in \R$ and we abbreviate $K^m_{p}(G):= K^m_{p,0}(G)$.
For $k \in \N$ the weighted homogeneous Sobolev space is defined as
\begin{equation*}
	\widehat{W}_{\gamma}^{k,p}(G):= \{\,u \in L^1_{loc}(G): \ \partial^\alpha u \in L^p_{\gamma}(G), \ |\alpha|=k\,\}, \quad \gamma \in \R,
\end{equation*} 
and equipped with the seminorm
\begin{equation*}
	\|u\|_{\widehat{W}_\gamma^{k,p}(G)}:= \|u\|_{\widehat{W}_\gamma^{k,p}}:= \sum_{|\alpha|=k} \|\partial^\alpha u\|_{L^p_\gamma(G)} 
\end{equation*}
for $k \in \N$ and $\gamma \in \R$.

The norm on a generic Banach space $X$ is usually denoted by $\|\cdot\|_X$.
If $Y$ is another Banach space,
then $\sL(X,Y)$ denotes the space of all continuous, linear operators from $X$ to $Y$
and $\sLis(X,Y)$ denotes the subspace of all linear isomorphisms from $X$ onto $Y$.
For $Y = X$ we employ the abbreviations $\sL(X)$ and $\sLis(X)$, respectively. 
\pagebreak

If $\psi: \Gamma \longrightarrow \R$ is a function, we occasionally denote by $\psi^{(j)} = \psi|_{\Gamma_j}$
the restriction to $\Gamma_j$ for $j = 1,\,2$.
The same notation is also occasionally used for vector fields $\psi: \Gamma \longrightarrow \R^m$ with $m \in \N$
and should not be confused with the components of $\psi$ in this case.
Moreover, if $\psi: \Gamma_j \longrightarrow \R$ with $j \in \{\,1,\,2\,\}$ is a function that is defined
on one of the smooth parts of the boundary of $G$ only,
we also employ the notation $\langle \psi \rangle_j := \lim_{x \rightarrow 0} \psi(x)$ for its trace at the corner $x = 0$ of the wedge $G$.



\section{The Laplace Equation subject to Neumann Boundary Conditions}\label{sectionlaplaceneumann}
Let $G \subset \R^2$ be the wedge domain defined as in \eqref{def_wedge} and $J=(0,T)$ with $0 < T < \infty$. The objective of this section is to consider the problem
\begin{equation*} 
	 	\begin{array}{r@{\ =\ }lll}
	 		 \Delta \phi &  f  &\quad \text{in} & J \times G, \\[0.5em]
	 		  \partial_\nu \phi  & 0& \quad \text{on} & J \times \Gamma
	 	\end{array}
\end{equation*}
and to show its optimal regularity.
Here $\nu$ denotes the unit outer normal vector at $\Gamma$ with $\Gamma$ defined as in \eqref{def_boundary}.
Recall that $\tau_1= -e_1$ and $\nu_1= - e_2$ on $\Gamma_1= (- \infty,0) \cdot \tau_1$ and
$\tau_2= (\cos \theta_0, \sin \theta_0)^T$ and $\nu_2= (-\sin \theta_0, \cos \theta_0)^T $ on $\Gamma_2= (0, \infty) \cdot \tau_2$, respectively.
The boundary condition in the above system is to be understood as
\begin{align*}
\partial_{\nu_1 } \phi &= 0 \quad \text{on} \ \Gamma_1, \\
\partial_{\nu_2 } \phi &= 0 \quad \text{on} \ \Gamma_2. 
\end{align*}
Here, optimal regularity of the Neumann-Laplace equation means to show the invertibility of the operator $A_{L,T} \phi := \Delta \phi$, where
$$A_{L,T}: \ L^p(J, K^3_p(G)) \rightarrow L^p(J, \widehat{W}^{1,p}(G)) $$
for all $p \in (1, \infty) \setminus \{ \frac{2 \theta_0}{3 \theta_0- \pi},\ \frac{2 \theta_0}{ 3 \theta_0 - 2 \pi}, 2\, \}$. 

The strategy will be to start by considering the time independent Neumann problem for the Laplace operator
\begin{equation} \label{probl_laplace_neumann}
	 	\begin{array}{r@{\ =\ }lll}
	 		 \Delta \phi &  f  &\text{in} &   G, \\[0.5em]
	 		  \partial_\nu \phi  & 0& \text{on} &  \Gamma
	 	\end{array}
\end{equation}
and to transform it onto a layer domain $\Omega:=\R \times (0, \theta_0)$ in a first step. Using the operator sum method we can then show the well-posedness of the transformed problem in the unweighted $L^p$-setting. In a second step we will show higher regularity of the transformed problem and then transform it back onto the wedge domain.

\begin{remark}
In \cite[Chapter 4]{Grisvard:Elliptic-Problems} the Laplace equation subject to general boundary conditions, where the Neumann boundary conditions are included, is studied on polygonal domains.
There, localizing the vertices and transforming the Laplace equation to a layer domain yields the same form of the Laplace equation on the layer as in our setting.
Hence, alternatively to the operator sum method, by modifying a step in the proof of \cite[Theorem~4.3.2.3]{Grisvard:Elliptic-Problems} we could also prove the invertibility of the transformed Neumann-Laplace operator on the layer.
For this approach a suitable variant of the condition (4.3.2.10) in \cite{Grisvard:Elliptic-Problems} has to be satisfied, which leads to a constraint on the parameter $p$ of the $L^p$-space.
Now, inserting into that equation $ \beta_p= 3-\frac{2+\gamma}{p}$ instead of $\frac{2}{p'}=2-\frac{2}{p}$, we get a condition that is equivalent to our spectral condition \eqref{cond_eigenv}; see also Remark \ref{remark_spectral_condition}.
Thus, this approach would lead to optimal regularity for the Neumann problem for the Laplace operator for the same values of $p$.
However, we prefer to provide a self-contained proof based on the operator sum method.
\end{remark}

Let's start with the transformation of problem \eqref{probl_laplace_neumann} onto the layer domain.
We set $\Omega:= \R \times I$, with $I:=(0, \theta_0)$ where $\theta_0$ is the angle of the wedge $G$.
We write the inverse of the transformation to polar coordinates as
$$\psi_P: \ \R_+ \times I \rightarrow G, \quad (r, \theta ) \mapsto (r \cos \theta, r \sin \theta) = (x_1, x_2). $$
We use the Euler transformation $r=e^x$ in radial direction and write, by a slight abuse of notation, $x \in \R$ for the new variable.
We set
$$\psi_E: \ \Omega \rightarrow \R_+ \times I, \quad (x, \theta) \mapsto (e^x, \theta)=: (r, \theta).  $$
It is not difficult to see that
$$ \psi:= \psi_P \circ \psi_E : \Omega \rightarrow G$$
is a diffeomorphism. We set
$$\Psi \phi := \phi \circ \psi \quad \text{and} \quad \Psi^{-1} \varphi := \varphi \circ \psi^{-1}. $$
Analogously to \cite{Maier-Saal:Stokes-Wedge} we define pull-back and push-forward by
\begin{equation}\label{def_pullback}
\vp :=\Theta^*_p \phi := e^{-\beta_p x} \Psi \phi \quad \text{and} \quad \phi:= \Theta^p_* \vp:= \Psi^{-1}e^{\beta_p x} \vp
\end{equation}
with $\beta_p \in \R$. 
Let $\phi$ be the solution of \eqref{probl_laplace_neumann}, then by \cite[Chapter 4]{Grisvard:Elliptic-Problems} we have that
\begin{equation} \label{trans_laplace}
\Theta^*_p(\Delta \phi)= e^{-2x} (r_{\beta_p}(\partial_x) + \partial_\theta^2) \varphi,
\end{equation}
where
\begin{equation} \label{polynomial_operator}
r_{\beta_p} (\partial_x)\varphi := (\partial_x + \beta_p)^2 \vp.
\end{equation}
To absorb the factor $e^{-2 x}$ in \eqref{trans_laplace}, we set
\begin{equation} \label{transf_pullback_neumann}
g= \widetilde{\Theta}^*_p f := e^{2x} \Theta^*_p f
\end{equation}
with inverse $\widetilde{\Theta}^p_* := (\widetilde{\Theta}^*_p)^{-1}$.
By the choice 
\begin{equation} \label{def_beta}
\beta_p= 3 - \frac{2 +\gamma}{p}
\end{equation}
 Lemma \ref{transfomationen} implies that
\[
\widetilde{\Theta}^*_p \in \sL_{is}\left( \widehat{W}_\gamma^{1,p}(G), W^{1,p}(\Omega) \right).
\]
We notice that $\beta_p$, the pull-back and the push-forward depend on $p$. That means that the corresponding operator families may not be consistent in $p$.

After transforming the boundary conditions of \eqref{probl_laplace_neumann} to the layer domain we obtain
\begin{equation*}
\partial_\theta \vp = 0 \quad \text{on} \ \partial \Omega= \R \times \{0, \theta_0 \}.
\end{equation*}
Hence, \eqref{probl_laplace_neumann} is equivalent to

\begin{equation} \label{probl_transf_laplace}
\begin{array}{r@{\ =\ }lll}
-(r_{\beta_p}(\partial_x) + \partial_\theta^2) \vp  & g &  \text{in} & \Omega \\[0.5em]
\partial_\theta \vp   & 0 & \text{on} & \partial \Omega.  \\
\end{array}
\end{equation}

The proof of the well-posedness of problem \eqref{probl_transf_laplace} needs some preparation. We start to describe the operators associated to the single parts of \eqref{probl_transf_laplace}:

\begin{enumerate}
\item Let $r_{\beta_p}$ be the polynomial given in \eqref{polynomial_operator} with $\beta_p$ given as in \eqref{def_beta}. We define $\cT_{x}$ in $L^p(\R)$ by setting
\[
	\cT_{x} \vp := - r_{\beta_p}(\partial_x) \vp, \quad \vp \in D(\cT_{x}) := W^{2,p}(\R).
\]
The spectrum of $\cT_{x}$ is given by the parabola $- r_{\beta_p}(i \R)$, which is symmetric w.r.t. the real axis, open to the right and has its intersection point with the real axis at $- \beta_p^2$. It is known that $\cT_{x} + d \in \cH^{\infty} (L^p(\R))$ for $d> \beta_p^2$ with $\phi^\infty_{\cT_{x} +d} < \frac{\pi}{2}$, see \cite{Nazarov:Heat-Equation, Maier-Saal:Stokes-Wedge}.
These properties are also true for the canonical extension of $\cT_x$ to $L^p(\R, L^p(I))$, that is for the operator
$$T_x \vp:= \cT_x \vp, \quad \vp \in D(T_x):=W^{2,p}(\R, L^p(I)).$$
See, for instance, \cite{Weis:Maximal-Regularity, Denk-Hieber-Pruess:Maximal-Regularity, Kunstmann-Weis:Parabolic-Equations} for operator-valued Fourier multiplier results. \\[-0.5\baselineskip]

\item We define $\cT_{\theta}$ in $L^p(I)$ by setting
$$\cT_{\theta}  \vp:= -\partial^2_\theta \vp, \quad  \ \varphi \in D(\cT_{\theta}):=
\left\{\phi \in W^{2,p}(I): \ \partial_\theta \phi =0 \ \text{on} \ \partial I \right\}. $$
It is straight forward to calculate its spectrum, which is given as 
\begin{equation}\label{eigenwert_neumann}
\sigma(\cT_{\theta})= \left\{ 0 \right\} \cup \left\{ \left(\frac{\pi k}{\theta_0} \right)^2, \ k \in \N \right\} 
\end{equation}
with corresponding eigenfunctions
$$\tilde{e}_k(\theta):= \cos \left(\frac{\pi k}{\theta_0} \theta \right), \quad k\in \N_0, \ \theta \in I,  $$
see also \cite{Maier-Saal:Stokes-Wedge}. Since $\cT_{\theta}$ is self-adjoint in
$L^2(I)$, the eigenfunctions form a basis of $L^2(I)$. We denote by
$(\lambda_i)_{i \in \N_0}$ the set of eigenvalues of $\cT_{\theta}$, i.e.,
$(\lambda_i)_{i \in \N_0}= \sigma(\cT_{ \theta})$ such that
$\lambda_0<\lambda_1<\dots.$
Setting $e_0 := \tilde{e}_0 / \sqrt{\theta_0}$, where $\tilde{e}_0$ is the eigenfunction to the eigenvalue $\lambda_0=0$,
and $e_i := \sqrt{2} \tilde{e}_i / \sqrt{\theta_0}$, where $\tilde{e}_i$ is the eigenfunction to the eigenvalue $\lambda_i$ for all $i \in \N$, we have
$$ \left<e_i, e_j \right> = \frac{2}{\theta_0} \int_0^{\theta_0} \tilde{e}_i \cdot \tilde{e}_j \ d\theta = \delta_{ij}, \quad \quad i,j \in \N,$$
and 
\begin{align*}
\left< e_0,\ e_j \right> &= \frac{\sqrt{2}}{\theta_0} \int_{0}^{\theta_0} \tilde{e}_0 \cdot \tilde{e}_{j} \ d\theta =0, \quad \quad j \in \N,  \\[0.5em]
\left< e_0,\ e_0 \right> &= \frac{1}{\theta_0} \int_{0}^{\theta_0} \tilde{e}_0 \cdot \tilde{e}_{0} \ d\theta  =1.
\end{align*}
By Fourier series techniques it is straight forward to see that
$\cT_{\theta}$ admits an $\cH^\infty$-calculus on $L^q(I)$ with
$\phi^\infty_{\cT_{\theta}}=0$; see \cite{Duong:Functional-Calculus-Elliptic-Operators} for more details.
Again these facts remain valid for the canonical extension of
$\cT_{\theta}$ to $L^p(\R, L^p(I))$, which is defined by
$$T_{\theta} \vp:= \cT_{\theta} \vp, \quad D(T_{\theta})
:= L^p(\R, D(\cT_{\theta})). $$
\end{enumerate}

Optimal regularity for \eqref{probl_transf_laplace} is reduced to invertibility of the operator
\[ T_p:= T_x + T_\theta: D(T_p) \rightarrow L^p(\Omega)
\]
if we can show that
\begin{equation*}
D(T_p) = \left\{ \vp \in W^{2,p}(\Omega):  \ \partial_\theta \vp =0 \ \text{on} \ \partial \Omega \right\}
= D(T_x) \cap D( T_\theta).
\end{equation*}
To this end, for $m\in\N$ let	
\begin{equation*}
P^c_{m,p} \vp = \sum^m_{i=0}\left< \vp , e_i \right>e_i
\end{equation*}
be the projection of $\vp \in L^p(I)$ onto $\left< e_0, \dots , e_m \right>$ and put $P_{m,p}:=1-P^c_{m,p}$. 
We also set $E^p_m:=P_{m,p} \left( L^p(I) \right)$. It is obvious that
$\left( P_{m,p} \right)_{1<p<\infty}$ is a consistent family on
$(L^p(I))_{1<p<\infty}$, so we omit the index $p$  
and write $P_m$. If $\mP_m$ denotes the canonical extension of $P_m$ to 
$L^p(\R, L^p(I))$, then $\mP_m \in \sL (L^p(\Omega))$ is a projector onto $L^p(\R, E_m^p)$.
Consequently, we have the topological decomposition
\begin{equation*}
L^p(\Omega)=L^p(\R, \left< e_0,\dots, e_m \right>) \oplus L^p(\R, E_m^p).
\end{equation*}
The proof of the following properties is straight forward.

\begin{lemma}\label{properties}
Let $1< p < \infty$. Let $d >\beta_p^2$ with $\beta_p$ as given in \eqref{def_beta}, $m\in\N$ and $T_{x}$, $T_{\theta}$ be given as above. 
Then we have
\begin{enumerate}
\item $\mP_m \vp \in D(T_{x})$ and $\mP_m T_{x} \vp= T_{x} \mP_m \vp$
for $\vp \in D(T_{x})$,
\item $\mP_m \vp \in D(T_{\theta})$ and $\mP_m T_{\theta} \vp= T_{\theta} \mP_m \vp$ for $\vp \in D(T_{\theta})$,
 \item $T_{x} + d, T_{\theta } \in \cH^{\infty} \left( L^p(\R, E^p_m) \right) \ \cap \ \cH^{\infty} \left( L^p(\R, \left<e_0,\dots, e_m \right>) \right) $ with the corresponding angles $\phi^\infty_{T_{x} + d }< \frac{\pi}{2}$, $\phi^\infty_{T_{\theta}} =0$,
 \item $\mP_{m}$, $(\lambda-T_x)^{-1}$ and $(\mu- T_{\theta})^{-1}$
 commute pairwise for $\lambda \in \rho(T_{x})$ and 
 $\mu \in \rho(T_{\theta})$.
\end{enumerate}
\end{lemma}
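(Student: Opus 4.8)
The plan is to exploit the tensor-product structure of $L^p(\Omega) = L^p(\R, L^p(I))$: the operator $T_x$ acts only in the $x$-variable, whereas $\mP_m$ and $T_\theta$ act only in the $\theta$-variable, so all of the asserted commutations are essentially tautological, and the $\cH^\infty$-calculus of $T_x + d$ and of $T_\theta$ descends to the complemented invariant subspaces $L^p(\R, E^p_m)$ and $L^p(\R, \langle e_0, \dots, e_m\rangle)$ by a standard permanence property.

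For (1), recall that $T_x$ is the canonical extension of $\cT_x$ to $L^p(\R, L^p(I))$, so $D(T_x) = W^{2,p}(\R, L^p(I))$ and $T_x = -r_{\beta_p}(\partial_x)$ differentiates only in $x$; since $\mP_m$ is the pointwise-in-$x$ extension of the \emph{bounded} operator $P_m \in \sL(L^p(I))$, it maps $W^{2,p}(\R, L^p(I))$ into itself and commutes with $r_{\beta_p}(\partial_x)$, hence with $T_x$. One checks this on $C_c^\infty(\Omega)$ and passes to the limit. For (2), write $P_m = I - \sum_{i=0}^m \langle\,\cdot\,, e_i\rangle e_i$; since each $e_i$ lies in $D(\cT_\theta)$ and is an eigenfunction, self-adjointness of $\cT_\theta$ in $L^2(I)$ gives $\cT_\theta P^c_{m,p} \vp = \sum_{i=0}^m \lambda_i \langle \vp, e_i\rangle e_i = P^c_{m,p} \cT_\theta \vp$ for $\vp \in D(\cT_\theta)$, so $P_m$ leaves $D(\cT_\theta)$ invariant and commutes with $\cT_\theta$ there; extending canonically to $L^p(\R, \cdot)$ — using $D(T_\theta) = L^p(\R, D(\cT_\theta))$ — yields (2).

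For (3), I would invoke the permanence of the bounded $\cH^\infty$-calculus under restriction to complemented invariant subspaces: if $A \in \cH^\infty(X)$ with angle $\phi$ and $Q \in \sL(X)$ is a projection commuting with $(\lambda - A)^{-1}$, then the parts of $A$ in $QX$ and $(I-Q)X$ admit a bounded $\cH^\infty$-calculus on those subspaces with angle $\le \phi$, because $f(A|_{QX}) = f(A)|_{QX}$ and $\|f(A)|_{QX}\| \le \|Q\|\,\|f(A)\|$; see e.g.\ \cite{Kunstmann-Weis:Parabolic-Equations, Denk-Hieber-Pruess:Maximal-Regularity}. Applying this with $A = T_x + d$ (which lies in $\cH^\infty(L^p(\Omega))$ with $\phi^\infty_{T_x+d} < \frac\pi2$ by the properties of $\cT_x$ recalled above together with the operator-valued Fourier-multiplier theorem) and with $A = T_\theta$ (angle $0$, from the Fourier-series $\cH^\infty$-calculus of $\cT_\theta$), and $Q = \mP_m$, gives (3); alternatively, on the summand $L^p(\R, \langle e_0, \dots, e_m\rangle) \cong L^p(\R)^{m+1}$ one simply notes that $T_x + d$ is a diagonal of $m+1$ copies of $\cT_x + d$ and $T_\theta$ is the bounded diagonal operator with entries $\lambda_0, \dots, \lambda_m \ge 0$. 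Finally, (4) follows by combining (1)–(2) with the fact that $(\lambda - T_x)^{-1}$ and $(\mu - T_\theta)^{-1}$ act on independent variables and therefore commute (again verified on $C_c^\infty(\Omega)$ and extended by density).

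The only point requiring any care is (3): one must ensure the $\cH^\infty$-angle is not enlarged upon passing to the invariant subspaces, which is exactly the content of the permanence lemma. All the remaining assertions are genuine separation-of-variables bookkeeping and are indeed straightforward.
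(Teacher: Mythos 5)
Your proposal is correct, and it matches the argument the paper intends: the authors state only that ``the proof of the following properties is straight forward'' and omit it, and your separation-of-variables bookkeeping (pointwise action of $\mP_m$ in $x$, eigenfunction computation for $\cT_\theta$, permanence of the bounded $\cH^\infty$-calculus on complemented invariant subspaces, and commutation of operators acting in independent variables) is exactly the standard verification being alluded to.
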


The invertibility of $T_p=T_x+T_\theta$ essentially
follows by the operator sum
method. For instance one can apply \cite[Proposition 3.5]{Nau-Saal:Cyindrical-Boundary-Value-Problems}, which is a consequence of the Kalton-Weis theorem \cite[Cororallary 5.4]{Kalton-Weis:Operator-Sums}. 

\begin{proposition}\label{isomorph_op_neum}
Let $1<p<\infty$ and $\beta_p $ be defined as in \eqref{def_beta} with $\gamma := 0$.
Then
\begin{equation*}\label{isomtqa}
T_{p} \in \sL_{is}\bigl(D(T_p),\, L^p(\Omega)\bigr),
\end{equation*}
if and only if 
\begin{equation}\label{cond_eigenv}
{\beta_p}^2 \notin \sigma(T_{\theta})=\{(\pi k/\theta_0)^2,\,k\in\N_0\}.
\end{equation}
\end{proposition}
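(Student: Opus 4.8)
The plan is to reduce the biconditional to a pair of one-dimensional-in-$\theta$ spectral computations by means of the spectral decomposition already prepared above. Using the projector $\mP_m$ we have the topological splitting $L^p(\Omega)=L^p(\R,\langle e_0,\dots,e_m\rangle)\oplus L^p(\R,E^p_m)$, and by Lemma~\ref{properties} the operators $T_x$, $T_\theta$, and hence $T_p=T_x+T_\theta$, leave both summands invariant. Therefore $T_p\in\sL_{is}(D(T_p),L^p(\Omega))$ if and only if the restrictions of $T_p$ to $L^p(\R,\langle e_0,\dots,e_m\rangle)$ and to $L^p(\R,E^p_m)$ are both isomorphisms. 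The point will be to choose $m$ large enough so that the second restriction is always invertible (by the operator sum method), while the entire obstruction $\beta_p^2\in\sigma(T_\theta)$ is concentrated in the first, finite-rank-in-$\theta$, restriction.

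For the \emph{sufficiency} of \eqref{cond_eigenv}: assume $\beta_p^2\notin\sigma(T_\theta)$ and fix $d>\beta_p^2$; since $\sigma(\cT_\theta)=\{\lambda_i\}_{i\in\N_0}$ is discrete with $\lambda_i\to\infty$, choose $m\in\N$ with $\lambda_{m+1}>d$. On $L^p(\R,\langle e_0,\dots,e_m\rangle)\cong L^p(\R)^{m+1}$ the operator $T_\theta$ acts, in the basis $e_0,\dots,e_m$, as the diagonal matrix $\mathrm{diag}(\lambda_0,\dots,\lambda_m)$, so $T_p$ restricts to $\bigoplus_{i=0}^m(\cT_x+\lambda_i)$. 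Recall that $\sigma(\cT_x)=-r_{\beta_p}(i\R)$ is the parabola $\xi\mapsto\xi^2-2i\beta_p\xi-\beta_p^2$; since $\beta_p=3-2/p\ge 1>0$, this curve meets $\R$ only at its vertex $-\beta_p^2$, whence $0\in\rho(\cT_x+\lambda_i)$ precisely when $\lambda_i\neq\beta_p^2$, which holds for every $i$ by hypothesis. So this restriction is an isomorphism. On $L^p(\R,E^p_m)$ I write $T_p=(T_x+d)+(T_\theta-d)$: by Lemma~\ref{properties}, $T_x+d$ has a bounded $\cH^\infty$-calculus of angle $<\pi/2$ and, its spectrum being the parabola with vertex $d-\beta_p^2>0$, is invertible; $T_\theta$ has a bounded $\cH^\infty$-calculus of angle $0$, and on $E^p_m$ its spectrum is $\{\lambda_i\}_{i>m}\subseteq[\lambda_{m+1},\infty)$, so $T_\theta-d$ is invertible with spectrum in $(0,\infty)$ and $\cH^\infty$-angle $0$; moreover the resolvents commute. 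Since the angles sum to less than $\pi$, the Kalton--Weis theorem \cite[Corollary~5.4]{Kalton-Weis:Operator-Sums} in the form of \cite[Proposition~3.5]{Nau-Saal:Cyindrical-Boundary-Value-Problems} yields that $(T_x+d)+(T_\theta-d)=T_p$ is closed and invertible on $L^p(\R,E^p_m)$, with domain $D(T_x)\cap D(T_\theta)$. Combining the two restrictions gives $T_p\in\sL_{is}(D(T_p),L^p(\Omega))$ with $D(T_p)=D(T_x)\cap D(T_\theta)$.

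For the \emph{necessity} I argue by contraposition: suppose $\beta_p^2=\lambda_k$ for some $k\in\N_0$. Fix $\psi\in C_c^\infty(\R)\setminus\{0\}$ and set $v_n(x):=n^{-1/p}\psi(x/n)$, so that $\|v_n\|_{L^p(\R)}=\|\psi\|_{L^p(\R)}$ is constant while $\|v_n'\|_{L^p(\R)}=n^{-1}\|\psi'\|_{L^p(\R)}\to 0$ and $\|v_n''\|_{L^p(\R)}=n^{-2}\|\psi''\|_{L^p(\R)}\to 0$. Since $\beta_p^2=\lambda_k$ the operator $\cT_x+\lambda_k$ has symbol $-(i\xi+\beta_p)^2+\beta_p^2=\xi^2-2i\beta_p\xi$, i.e.\ $\cT_x+\lambda_k=-\partial_x^2-2\beta_p\partial_x$, hence $\|(\cT_x+\lambda_k)v_n\|_{L^p(\R)}\le\|v_n''\|_{L^p(\R)}+2\beta_p\|v_n'\|_{L^p(\R)}\to0$. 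The functions $\vp_n:=v_n\otimes e_k$ lie in $D(T_x)\cap D(T_\theta)$ (as $v_n\in W^{2,p}(\R)$ and $e_k\in D(\cT_\theta)$), satisfy $\|\vp_n\|_{L^p(\Omega)}=\|v_n\|_{L^p(\R)}\|e_k\|_{L^p(I)}$ bounded away from $0$, while $T_p\vp_n=\bigl((\cT_x+\lambda_k)v_n\bigr)\otimes e_k$ has $L^p(\Omega)$-norm tending to $0$. Thus $T_p$ is not bounded below, hence not an isomorphism, completing the contrapositive.

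The genuinely load-bearing points are (a) isolating the condition \eqref{cond_eigenv} on the finite part — i.e.\ identifying, from the known parabolic spectrum of $\cT_x$, exactly when $\cT_x+\lambda_i$ fails to be invertible — and (b) the Weyl-sequence construction for the converse; the infinite part, by contrast, is a routine application of the operator sum theorem, all of whose hypotheses ($\cH^\infty$-calculus, angles, commuting resolvents, invertibility of one summand) were already verified in Lemma~\ref{properties}. A minor additional bookkeeping step is the identification $D(T_p)=D(T_x)\cap D(T_\theta)=\{\vp\in W^{2,p}(\Omega):\partial_\theta\vp=0\ \text{on}\ \partial\Omega\}$, the last equality being the mixed-derivative estimate delivered together with the operator sum theorem. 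I do not expect any of these to present a serious obstacle beyond careful bookkeeping.
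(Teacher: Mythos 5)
Your proposal is correct and follows essentially the same route the paper takes: the splitting $L^p(\Omega)=L^p(\R,\langle e_0,\dots,e_m\rangle)\oplus L^p(\R,E^p_m)$ via $\mP_m$, explicit inversion of $\cT_x+\lambda_i$ on the finitely many low modes using the parabola $-r_{\beta_p}(i\R)$, the Kalton--Weis operator sum theorem (after shifting by $d$) on the high-mode part, and the identification $D(T_p)=D(T_x)\cap D(T_\theta)$ -- which is exactly the argument of \cite[Theorem~2.3, Lemma~2.5]{Koehne-Saal-Westermann:Stokes-Wedge} that the paper's proof cites verbatim rather than reproducing. Your explicit Weyl-sequence argument for the necessity of \eqref{cond_eigenv} is a sound way to make the ``only if'' direction self-contained.
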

\begin{proof}
Relying on Lemma~\ref{properties}, the fact that
\[
T_{p} \in \sL_{is}\bigl(D(T_{x}) \cap D(T_{\theta}),\, L^p(\Omega)\bigr)
\]
follows by copying almost verbatim the lines of the proof of \cite[Theorem~2.3]{Koehne-Saal-Westermann:Stokes-Wedge}.
The proof of \cite[Lemma 2.5]{Koehne-Saal-Westermann:Stokes-Wedge} in addition shows that
\[
	W^{2,p}(\Omega)=W^{2,p}(\R,L^p(I))\cap L^p(\R,W^{2,p}(I)).
\]
The definition of the Sobolev space 
then yields
 that
$$D(T_{x})\cap D(T_\theta)=\left\{\varphi \in W^{2,p}(\Omega): \partial_\theta \vp=0 \text{ on } \partial \Omega \right\} = D(T_p). $$
This completes the proof.
\end{proof}

Next, we show higher regularity of the transformed problem \eqref{probl_transf_laplace}. 

\begin{corollary} \label{cor_higher_regularity}
Let $1<p< \infty$ and let $\beta_p $ be defined as in \eqref{def_beta} with $\gamma := 0$ and let condition \eqref{cond_eigenv} be fulfilled.
Then for every $g \in W^{1,p}(\Omega)$ the solution $\vp \in W^{2,p}(\Omega)$ of \eqref{probl_transf_laplace} satisfies the estimate
\begin{equation*}
\| \vp\|_{W^{3,p}(\Omega)} \leq C \| g \|_{W^{1,p}(\Omega)} 
\end{equation*}
for some constant $C>0$ that is independent of $\vp$ and $g$.
\end{corollary}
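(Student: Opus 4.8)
The plan is to obtain the claimed $W^{3,p}$-bound by a one-step bootstrap. Proposition~\ref{isomorph_op_neum} already provides $\vp\in W^{2,p}(\Omega)$; one additional derivative in the tangential variable $x$ comes essentially for free from the translation invariance of the layer operator, and the last remaining third-order derivative $\partial_\theta^3\vp$ is then recovered algebraically from the differential equation. Concretely, since \eqref{cond_eigenv} holds, $\vp=T_p^{-1}g\in D(T_p)$ by Proposition~\ref{isomorph_op_neum}, and because $D(T_p)$ equals, with equivalent norm, the closed subspace $\{\psi\in W^{2,p}(\Omega):\partial_\theta\psi=0\text{ on }\partial\Omega\}$ of $W^{2,p}(\Omega)$, we have $T_p^{-1}\in\sL(L^p(\Omega),W^{2,p}(\Omega))$, in particular $\|\vp\|_{W^{2,p}(\Omega)}\le C\|g\|_{L^p(\Omega)}$. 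It then suffices to estimate the four third-order derivatives $\partial_x^3\vp$, $\partial_x^2\partial_\theta\vp$, $\partial_x\partial_\theta^2\vp$, $\partial_\theta^3\vp$ in $L^p(\Omega)$ by $C\|g\|_{W^{1,p}(\Omega)}$.

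For the first three I would argue by difference quotients in $x$. Let $\tau_h\psi:=h^{-1}(\psi(\cdot+he_1)-\psi)$ on $L^p(\Omega)=L^p(\R,L^p(I))$. Since $r_{\beta_p}(\partial_x)$ has constant coefficients and $T_\theta$ acts only in $\theta$, the operator $T_p=T_x+T_\theta$ commutes with translations in $x$; hence $\tau_h$ maps $D(T_p)$ into itself and commutes with $T_p^{-1}$, so that $\tau_h\vp=T_p^{-1}(\tau_h g)$. Because $g\in W^{1,p}(\Omega)$ we have $\tau_h g\to\partial_x g$ in $L^p(\Omega)$ as $h\to0$, and because $\vp\in W^{2,p}(\Omega)$ we have $\tau_h\vp\to\partial_x\vp$ in $L^p(\Omega)$; letting $h\to0$ and using the boundedness of $T_p^{-1}:L^p(\Omega)\to W^{2,p}(\Omega)$ yields $\partial_x\vp=T_p^{-1}(\partial_x g)\in W^{2,p}(\Omega)$ with $\|\partial_x\vp\|_{W^{2,p}(\Omega)}\le C\|\partial_x g\|_{L^p(\Omega)}\le C\|g\|_{W^{1,p}(\Omega)}$. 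This controls $\partial_x^3\vp$, $\partial_x^2\partial_\theta\vp$ and $\partial_x\partial_\theta^2\vp$.

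For $\partial_\theta^3\vp$ I would solve \eqref{probl_transf_laplace} for the second $\theta$-derivative: $\partial_\theta^2\vp=-g-(\partial_x+\beta_p)^2\vp=-g-\partial_x^2\vp-2\beta_p\partial_x\vp-\beta_p^2\vp$ as an identity in $L^p(\Omega)$. Differentiating once more in $\theta$ in the distributional sense, the right-hand side has $\theta$-derivative $-\partial_\theta g-\partial_x^2\partial_\theta\vp-2\beta_p\partial_x\partial_\theta\vp-\beta_p^2\partial_\theta\vp$, each term of which lies in $L^p(\Omega)$: indeed $\partial_\theta g\in L^p(\Omega)$ since $g\in W^{1,p}(\Omega)$, $\partial_x^2\partial_\theta\vp\in L^p(\Omega)$ by the previous step, and $\partial_x\partial_\theta\vp,\partial_\theta\vp\in L^p(\Omega)$ since $\vp\in W^{2,p}(\Omega)$. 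Hence $\partial_\theta^3\vp\in L^p(\Omega)$ with $\|\partial_\theta^3\vp\|_{L^p(\Omega)}\le C(\|g\|_{W^{1,p}(\Omega)}+\|\partial_x\vp\|_{W^{2,p}(\Omega)}+\|\vp\|_{W^{2,p}(\Omega)})\le C\|g\|_{W^{1,p}(\Omega)}$. Combining this with the tangential step and $\vp\in W^{2,p}(\Omega)$, all derivatives of $\vp$ up to order three belong to $L^p(\Omega)$ with norm bounded by $C\|g\|_{W^{1,p}(\Omega)}$; that is, $\vp\in W^{3,p}(\Omega)$ and the asserted estimate holds.

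The only genuinely delicate point I anticipate is the tangential-regularity step. It hinges on the translation invariance of $T_p$ in the $x$-variable — which rests on the constant-coefficient structure of $r_{\beta_p}(\partial_x)$ recorded in \eqref{polynomial_operator} — and on the fact that $x$ is a tangential direction, so that difference quotients of $\vp$ retain the homogeneous Neumann condition and hence remain in $D(T_p)$. Everything else amounts to routine manipulation of the equation and bookkeeping of norms.
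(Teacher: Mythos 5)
Your proposal is correct and follows essentially the same route as the paper's proof: difference quotients in the tangential variable $x$, using the commutation of translations with $T_p$ and the isomorphism of Proposition~\ref{isomorph_op_neum} to get $\partial_x\vp\in W^{2,p}(\Omega)$ with the estimate $\|\partial_x\vp\|_{W^{2,p}(\Omega)}\le C\|\partial_x g\|_{L^p(\Omega)}$, and then recovering $\partial_\theta^3\vp$ algebraically from the equation. The only cosmetic difference is that you pass to the limit via $\tau_h\vp=T_p^{-1}(\tau_h g)$ and the boundedness of $T_p^{-1}$, whereas the paper phrases the same step as a Cauchy-sequence argument for $D_1^h\vp$ in $D(T_p)$.
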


\begin{proof}
Denote by $D_1^h \vp$ the difference quotient
  $$ D_1^h \vp(x, \theta):= \frac{\vp((x, \theta) + h e_1) -\vp(x, \theta)}{h}, \quad \quad h \in \R, \ h \neq 0,$$
where $e_1:=(1,0) $. Let $ \vp \in D(T_p)$ be the solution of \eqref{probl_transf_laplace}.
Applying $D_1^h$ to \eqref{probl_transf_laplace} and using the fact that $D_1^h$ commutes with $T_{p}$, we obtain
\begin{align}\label{diff_gleichung}
 D_1^h T_p \vp &= D_1^h g \ \text{in} \ \mathcal{D}'(\Omega) \notag \\
 \Leftrightarrow \quad T_p D_1^h \vp &= D_1^h g \ \text{in} \ \mathcal{D}'(\Omega).
\end{align}

Now, let $g \in W^{1,p}(\Omega)$. The above calculation and Proposition \ref{isomorph_op_neum} imply
\begin{align}\label{absch_diffqu}
\|D_1^h \vp - D_1^{h'}\vp \|_{W^{2,p}(\Omega)} \leq  C  \|(D_1^h- D_1^{h'}) g \|_{L^p(\Omega)}  
\end{align}
for a constant $C>0$. 

For the right-hand side of \eqref{absch_diffqu} it is straight forward to see that
\begin{align*}
\|(D_1^h- D_1^{h'})g \|_{L^p(\Omega) }
 \underset{h, h' \rightarrow 0}{ \longrightarrow} 0,
\end{align*}
which implies that $D_1^h g$ is a Cauchy sequence in $L^p(\Omega)$ converging to $\partial_x g \in L^p(\Omega)$.

It follows by the estimate \eqref{absch_diffqu} that $D_1^h \vp $ is a Cauchy sequence in $ D(T_p) $ converging to $\partial_x \vp \in D(T_p)$.
The last calculations imply
\begin{align*}
\|\partial_x \vp\|_{W^{2,p}(\Omega)} & \leq C \|\partial_x g\|_{L^p(\Omega)} \leq C \| g \|_{W^{1,p}(\Omega)},
 \quad  \quad g \in W^{1,p}(\Omega),
\end{align*}
for a constant $C>0$. This yields that $\partial_x \vp \in W^{2,p}(\Omega)$, i.e.
\begin{equation*}
\vp, \nabla \vp, \nabla^2 \vp, \nabla^2 \partial_x \vp \in L^p(\Omega).
\end{equation*}
We still have to prove that  $\partial^3_\theta \vp \in L^p(\Omega)$. This, however, follows by $T_p \vp=g$. Since 
$$T_p \vp =( \partial^2_x + 2 \beta_p \partial_x + \beta^2_p + \partial_\theta^2 ) \vp$$
we have that
$$\partial^2_\theta \vp = - \left(  \partial_x^2 + 2 \beta_p  \partial_x + \beta_p^2 \right) \vp +  g \in W^{1,p}(\Omega),$$
and, hence, $\partial_\theta^3 \vp \in L^p(\Omega)$.
\end{proof}

Now, we consider the equivalence of problems \eqref{probl_laplace_neumann} and \eqref{probl_transf_laplace}. We define the Laplacian $A_L$ on the wedge domain as
\begin{equation}\label{op_laplace_neumann}
A_L \phi:= \Delta \phi, \qquad \phi \in D(A_L) := \left\{ \eta \in K^3_{p, \gamma}(G): \ \partial_\nu \eta= 0 \ \text{on} \ \Gamma \right\} . 
\end{equation}

\begin{lemma}\label{transfomationen}
Let $p \in (1, \infty)$, let $\gamma \in \R$ such that $\gamma \neq p-2$ and let $ \beta_p= 3- \frac{2 + \gamma}{p}$. Let $ \Theta_*^p, \ \Theta^*_p$ be defined as in \eqref{def_pullback} and let $ \widetilde{\Theta}^p_*, \widetilde{\Theta}^*_p $ be defined as in \eqref{transf_pullback_neumann}. Then we have
\begin{align*}
\widetilde{\Theta}^*_p \in \sL_{is} \left( \widehat{W}_{\gamma}^{1,p} (G), W^{1,p}(\Omega) \right), \quad \Theta^*_p  \in  \sL_{is} \left( D(A_L), D(T_p) \right)
\end{align*}
where $ \|\cdot \|_{D(A_L)}=\| \cdot \|_{K^3_{p, \gamma}(G)}  $ and $\|\cdot\|_{D(T_p)}=  \| \cdot \|_{W^{3,p}(\Omega)} .  $ \\
In particular, $\phi \in D(A_L)$ is the unique solution to \eqref{probl_laplace_neumann} for $f \in \widehat{W}_{\gamma}^{1,p}(G)$,
if and only if $\varphi= \Theta^*_p \phi \in D(T_p)$ is the unique solution to \eqref{probl_transf_laplace} for $g= \widetilde{\Theta}^*_p f \in W^{1,p}(\Omega)$. 
\end{lemma}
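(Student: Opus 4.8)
The plan is to prove both isomorphism assertions by a direct computation in the polar--logarithmic coordinates $(x,\theta)$ supplied by $\psi$, and then to deduce the equivalence of the two Neumann problems from \eqref{trans_laplace} together with the transformed boundary operator. Throughout write $v := \Psi u = u \circ \psi$ for $u \in L^1_{loc}(G)$. First I would record the coordinate identities I shall use repeatedly: $\psi$ pulls $dx_1\,dx_2$ back to $e^{2x}\,dx\,d\theta$ and $\rho^\gamma$ back to $e^{\gamma x}$; a Cartesian derivative of order $k$ transforms as $\partial^\alpha = e^{-kx}L_\alpha$, where $L_\alpha$ is a differential operator in $(x,\theta)$ of order at most $k$ with coefficients smooth and bounded on $\bar I$ (the classical Kondrat'ev change of variables, cf.\ \cite[Chapter~4]{Grisvard:Elliptic-Problems} and \cite{Maier-Saal:Stokes-Wedge}; the same computation underlies \eqref{trans_laplace}); and, since on the sides $\theta \in \{0,\theta_0\}$ the outer conormal derivative $\partial_\nu$ equals $\pm e^{-x}\partial_\theta$, the condition $\partial_\nu u = 0$ on $\Gamma$ is equivalent to $\partial_\theta v = 0$ on $\partial\Omega$.

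Next I would treat $\Theta^*_p u = e^{-\beta_p x} v$. The decisive algebraic fact here is $\gamma + 2 - 3p = -p\beta_p$, which is precisely the definition \eqref{def_beta} of $\beta_p$: combining it with the identities above, $\rho^{|\alpha|-3}\partial^\alpha u = e^{-3x}L_\alpha v$ for $1 \le |\alpha| \le 3$ and $\rho^{-3}u = e^{-3x}v$, so a change of variables shows $\|u\|_{K^3_{p,\gamma}(G)}$ to be comparable to $\bigl(\sum_{j \le 3}\|e^{-\beta_p x}\,\partial^j_{x,\theta} v\|_{L^p(\Omega)}^p\bigr)^{1/p}$, where $\partial^j_{x,\theta}$ runs over all partial derivatives of order $j$ in $(x,\theta)$. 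Since $\beta_p$ is constant, $e^{-\beta_p x}\partial^j_{x,\theta} v$ is a fixed linear combination of $\partial^i_{x,\theta}(e^{-\beta_p x} v) = \partial^i_{x,\theta}(\Theta^*_p u)$, $i \le j$, and conversely, so the last quantity is comparable to $\|\Theta^*_p u\|_{W^{3,p}(\Omega)}$. With the boundary-condition equivalence from the first step, this gives $\Theta^*_p \in \sL_{is}(D(A_L),D(T_p))$ with the stated norms, the inverse being $\Theta^p_*$. No Hardy inequality enters here, since the lowest-order term $\rho^{-3}u$ already belongs to the $K^3_{p,\gamma}$-norm and transforms directly into $\Theta^*_p u \in L^p(\Omega)$.

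For $\widetilde{\Theta}^*_p u = e^{(2-\beta_p)x} v =: g$ the relevant identity is $\gamma + 2 - p = p(2-\beta_p)$, again \eqref{def_beta}; it yields that $\|u\|_{\widehat{W}^{1,p}_\gamma(G)}$ is comparable to $\|e^{(2-\beta_p)x}\partial_x v\|_{L^p(\Omega)} + \|e^{(2-\beta_p)x}\partial_\theta v\|_{L^p(\Omega)}$. Since $\partial_\theta g = e^{(2-\beta_p)x}\partial_\theta v$ and $\partial_x g = e^{(2-\beta_p)x}\partial_x v + (2-\beta_p)g$, the only term of $\|g\|_{W^{1,p}(\Omega)}$ not already controlled by $\|u\|_{\widehat{W}^{1,p}_\gamma(G)}$ is $\|g\|_{L^p(\Omega)} = \|e^{(2-\beta_p)x}v\|_{L^p(\Omega)}$. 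This is exactly where the hypothesis $\gamma \ne p-2$ is used: it is equivalent to $2 - \beta_p \ne 0$, so the one-dimensional Hardy inequality in the $x$-variable --- Hardy's inequality for the wedge, Lemma~\ref{lemma_hardy} --- estimates $\|e^{(2-\beta_p)x}v\|_{L^p(\Omega)}$ by $\|e^{(2-\beta_p)x}\partial_x v\|_{L^p(\Omega)}$, hence by $\|u\|_{\widehat{W}^{1,p}_\gamma(G)}$. Running the computation backwards --- every step is reversible, as $\psi$ is a diffeomorphism and $e^{(2-\beta_p)x}$ is smooth and nowhere vanishing --- shows $\widetilde{\Theta}^p_* = (\widetilde{\Theta}^*_p)^{-1}$ maps $W^{1,p}(\Omega)$ boundedly into $\widehat{W}^{1,p}_\gamma(G)$, whence $\widetilde{\Theta}^*_p \in \sL_{is}(\widehat{W}^{1,p}_\gamma(G),W^{1,p}(\Omega))$.

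Finally, for the ``in particular'' I would combine the two isomorphisms with \eqref{trans_laplace}: because $\widetilde{\Theta}^*_p = e^{2x}\Theta^*_p$, identity \eqref{trans_laplace} gives $\widetilde{\Theta}^*_p(\Delta\phi) = (r_{\beta_p}(\partial_x) + \partial_\theta^2)(\Theta^*_p\phi)$ for $\phi \in D(A_L)$, and together with the transformation of the boundary condition this says exactly that $\Theta^*_p$ sends a solution $\phi \in D(A_L)$ of \eqref{probl_laplace_neumann} with right-hand side $f$ to a solution $\varphi \in D(T_p)$ of \eqref{probl_transf_laplace} with right-hand side $g = \widetilde{\Theta}^*_p f \in W^{1,p}(\Omega)$, while $\Theta^p_*$ sends solutions back; bijectivity of $\Theta^*_p$ and $\widetilde{\Theta}^*_p$ between the respective spaces then makes uniqueness on one side equivalent to uniqueness on the other. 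I expect the only genuinely non-routine point to be the $L^p(\Omega)$-estimate of the zeroth-order term for $\widetilde{\Theta}^*_p$: the homogeneous norm on $G$ controls $\nabla u$ only, while $W^{1,p}(\Omega)$ also controls the function itself, so the weighted Hardy inequality cannot be bypassed --- and it is exactly this that forces the exclusion $\gamma = p-2$ (equivalently $p = 2$ when $\gamma = 0$). The rest, namely keeping track of the lower-order terms generated by the chain rule and checking that after multiplication by the exponential weights each of them lands in the right unweighted $L^p(\Omega)$-space, is routine bookkeeping.
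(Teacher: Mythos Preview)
Your proposal is correct and follows essentially the same route as the paper: the $K^3_{p,\gamma}(G) \leftrightarrow W^{3,p}(\Omega)$ isomorphism via the Kondrat'ev change of variables (which the paper outsources to an external reference while you sketch directly), the boundary-condition transfer $\partial_\nu \leftrightarrow \pm e^{-x}\partial_\theta$, and the $\widehat{W}^{1,p}_\gamma(G) \leftrightarrow W^{1,p}(\Omega)$ isomorphism with the zeroth-order $L^p(\Omega)$-term controlled by Hardy's inequality (Lemma~\ref{lemma_hardy}), which is precisely what forces $\gamma \neq p-2$. The one point you should make explicit is that Lemma~\ref{lemma_hardy} carries a vanishing hypothesis ($u(0)=0$ if $\gamma<p-2$, $u(\infty)=0$ if $\gamma>p-2$), so to invoke it you must pass to the appropriate representative modulo constants in $\widehat{W}^{1,p}_\gamma(G)$; the paper does exactly this in one line, and without it the isomorphism statement is not even well-posed.
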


\begin{proof}
The fact that $  \Theta^*_p  \in  \sL_{is} \left( K^3_{p,\gamma}(G), W^{3,p}(\Omega)\right)$ follows from \cite[Manuscript~2, Lemma~B.3~(1)]{Westermann:Stokes-Equations} with $l-k:=3$.
In combination with the boundary conditions transformed at the beginning of this section, we obtain
\[
	\Theta^*_p \in \sL\left(D(A_L), D(T_p) \right) \quad \text{and} \quad \Theta_*^p \in \sL\left(D(T_p), D(A_L) \right).
\] 
Since $\Theta^*_p$ is the inverse of $\Theta^p_*$, the second assertion is follows.

Now, let $g \in W^{1,p}(\Omega)$.
Then we have
\begin{equation*}
	\begin{array}{rclclcl}
		\Psi(\partial_1 \widetilde{\Theta}^*_p g)(x, \theta)
			& = & c_p \cos \theta e^{\delta x / p} g(x, \theta) & + & \cos \theta e^{\delta x / p} \partial_x g(x, \theta) & - & \sin \theta e^{\delta x / p} \partial_\theta g(x, \theta), \\[0.5em]
		\Psi(\partial_2 \widetilde{\Theta}^*_p g)(x, \theta)
			& = & c_p \sin \theta e^{\delta x / p} g(x, \theta) & + & \sin \theta e^{\delta x / p} \partial_x g(x, \theta) & + & \cos \theta e^{\delta x / p} \partial_\theta g(x, \theta)
	\end{array}
\end{equation*}
with $\delta := (\beta_p - 3) p = 2 + \gamma$ and $c_p := \beta_p - 2$.
For $\varphi \in L_p(\Omega)$ and $\chi \in \{\,\cos,\,\sin\,\}$ and $\phi$ given as $\Psi \phi(x, \theta) = \chi(\theta) e^{- \delta x / p} \varphi(x, \theta)$
we further have
\begin{equation*}
	\|\phi\|^p_{L^p_\gamma(G)}
		= \int_G |\phi|^p \rho^\gamma d(x_1, x_2)
		= \int_\Omega e^{(2 + \gamma) x} |\chi(\theta) e^{- \delta x / p} \varphi(x, \theta)|^p d(x, \theta)
		\leq \|\varphi\|^p_{L_p(\Omega)},
\end{equation*}
which yields
\begin{equation*}
	\|\widetilde{\Theta}^p_* g\|_{\widehat{W}^{1,p}_\gamma(G)}
		\leq C \|g\|_{W^{1,p}(\Omega)}
\end{equation*}
for some constant $C>0$;
cf.~ also \cite[Manuscript~2, Lemma~B.1~(1)]{Westermann:Stokes-Equations} with $l:=2$.

Next, we show the converse estimate. Let $f \in \widehat{W}_\gamma^{1,p}(G)$ such that $f(0)=0$ if $\gamma < p-2$ and $f(\infty)=0$ if $\gamma> p-2$.
Then Hardy's inequality, see Lemma \ref{lemma_hardy}, implies
\begin{align*}
\|\widetilde{\Theta}^*_p f \|^p_{L^p(\Omega)}& =  \int_\Omega |e^{2x} e^{-\beta_p x} \Psi f(x, \theta) |^p d(x, \theta)\\[0.5em]
& = \int_G | \rho^{2 -(3- \frac{2+ \gamma}{p})} f(x_1,x_2) |^p \rho^{-2} d(x_1, x_2) \\[0.5em]
& = \| \rho^{-1} f \|^p_{L_\gamma^p(G)} \leq C \| \nabla f \|^p_{L_\gamma^p(G)}
\end{align*} 
for some constant $C:= C(p,\gamma)>0$. Moreover, we have 
\begin{align*}
&\|  \widetilde{\Theta}^*_p f \|^p_{\widehat{W}^{1,p}(\Omega)}= \int_\Omega | \nabla e^{(2-\beta_p) x} \Psi  f(x,\theta) |^p d(x,\theta) \\[0.5em]
& \quad = \int_G \left| \rho^{2 -\beta_p} \left( \begin{pmatrix} 2- \beta_p \\ 0 \end{pmatrix} f(x_1, x_2) +\rho \left(\begin{array}{rr} \cos\theta & \ \sin \theta \\ - \sin\theta & \ \cos\theta \end{array} \right)\right)  \nabla f(x_1, x_2)\right|^p \!\! \rho^{-2}d(x_1, x_2) \\[0.5em]
& \quad \leq \int_G | \rho^{2 - (3 - \frac{2+ \gamma}{p})} \begin{pmatrix} 2 - \beta_p  \\ 0 \end{pmatrix} f(x_1, x_2) |^p \rho^{-2} d(x_1, x_2) \\[0.5em]
& \quad \quad \quad \quad + \int_G | \rho^{3- (3- \frac{2 +\gamma}{p})}\nabla f(x_1,x_2) |^p \rho^{-2} d(x_1, x_2) \\[0.5em]
& \quad \leq C \big(\| \rho^{-1} f \|^p_{L_\gamma^p(G)} + \| \nabla f \|^p_{L_\gamma^p(G)}\big) \leq C \| \nabla f \|^p_{L_\gamma^p(G)},
\end{align*}
for a constant $C:=C(p, \gamma)>0$.
Hence, the first assertion $ \widetilde{\Theta}^*_p  \in \sL_{is} ( \widehat{W}_\gamma^{1,p} (G), W^{1,p}(\Omega) )$ follows.
\end{proof}

\begin{remark}\label{remark_spectral_condition}
(a) For $ \beta_p = 3 - \frac{2+ \gamma}{p}$ the condition \eqref{cond_eigenv} is fulfilled, if every eigenvalue $\lambda_i$ of $T_{p, \theta}$ satisfies
\begin{equation}
\lambda_i \neq \beta_p ^2= \left(3- \frac{2+ \gamma}{p}\right)^2.
\end{equation}
For the case $\gamma =0$, i.e.\ for the Kondrat'ev weight $\rho^\gamma \equiv 1$, we then have
$$  \lambda_i \neq \beta_p^2 \ \Leftrightarrow \ \left(3 - \frac{2}{p}\right)^2 \neq \left( \frac{i \pi}{\theta_0} \right)^2, \qquad i \in \N_0. $$
This is equivalent to
$$ p \neq \frac{2 \theta_0}{3 \theta_0 - i \pi}, \qquad i \in \N_0.$$
Since $\theta_0 \in (0, \pi)$, the above relation is always fulfilled for $p \in (1, \infty) \setminus \{ \frac{2 \theta_0}{3 \theta_0- \pi},\  \frac{2 \theta_0}{3 \theta_0-2 \pi} \} $.

(b) Lemma \ref{transfomationen} is fulfilled for all $p \in (1, \infty)$ such that $\gamma \neq p-2$ with $\gamma \in \R$. For $\gamma = 0$ this is equivalent to $ p\neq 2$.
\end{remark}
Proposition \ref{isomorph_op_neum}, Corollary \ref{cor_higher_regularity},  Lemma \ref{transfomationen} and the last remark yield the following result.
\begin{corollary}\label{cor_probl_summary}
Let $p \in (1, \infty) \setminus \{ \frac{2 \theta_0}{3 \theta_0- \pi},\ \frac{2 \theta_0}{3 \theta_0 - 2 \pi} ,\ 2 \} $, $\theta_0 \in (0, \pi)$ and $ \rho= |(x_1,x_2)|$. Then equation \eqref{probl_laplace_neumann} is for each $f \in \widehat{W}^{1,p}(G)$ uniquely solvable with a solution $\phi$ satisfying 
$$ \rho^{|\alpha|-3} \partial^{\alpha} \phi \in L^p(G), \qquad |\alpha| \leq 3.$$
\end{corollary}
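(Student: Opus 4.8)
The plan is to simply chain together the three results that precede this statement, specialising to the Kondrat'ev weight $\gamma = 0$. First I would fix $p \in (1,\infty) \setminus \{\frac{2\theta_0}{3\theta_0-\pi},\ \frac{2\theta_0}{3\theta_0-2\pi},\ 2\}$ and set $\gamma := 0$, so that $\beta_p = 3 - \frac{2}{p}$. By Remark \ref{remark_spectral_condition}(a), the exclusion of $p = \frac{2\theta_0}{3\theta_0-\pi}$ and $p = \frac{2\theta_0}{3\theta_0-2\pi}$ (together with $\theta_0 \in (0,\pi)$, which rules out the remaining values $p = \frac{2\theta_0}{3\theta_0-i\pi}$ for $i \geq 3$ automatically as they lie outside $(1,\infty)$ or are negative) guarantees that the spectral condition \eqref{cond_eigenv}, $\beta_p^2 \notin \sigma(T_\theta)$, holds. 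By Remark \ref{remark_spectral_condition}(b), the exclusion of $p = 2$ is exactly the hypothesis $\gamma \neq p-2$ needed for Lemma \ref{transfomationen}.

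Next I would run the equivalence. Given $f \in \widehat{W}^{1,p}(G)$, Lemma \ref{transfomationen} yields $g := \widetilde{\Theta}^*_p f \in W^{1,p}(\Omega)$ and asserts that $\phi \in D(A_L)$ solves \eqref{probl_laplace_neumann} if and only if $\varphi := \Theta^*_p \phi \in D(T_p)$ solves the transformed problem \eqref{probl_transf_laplace} with right-hand side $g$. Proposition \ref{isomorph_op_neum} (applicable since \eqref{cond_eigenv} holds) gives a \emph{unique} $\varphi \in D(T_p) \subset W^{2,p}(\Omega)$ solving \eqref{probl_transf_laplace}, and Corollary \ref{cor_higher_regularity} upgrades this to $\varphi \in W^{3,p}(\Omega)$. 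Pulling back via $\phi := \Theta^p_* \varphi$ and using the isomorphism $\Theta^*_p \in \sL_{is}(D(A_L), D(T_p))$ with $D(A_L) = K^3_{p,0}(G)$ (for $\gamma = 0$), I obtain a unique $\phi \in K^3_p(G)$ solving \eqref{probl_laplace_neumann}. By the very definition of $K^3_p(G) = K^3_{p,0}(G)$ in \eqref{def_weighted_sobolevspace}, membership $\phi \in K^3_p(G)$ is precisely the assertion $\rho^{|\alpha|-3}\partial^\alpha \phi \in L^p(G)$ for all $|\alpha| \leq 3$, which is what is claimed.

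The only point requiring any care — and hence the ``main obstacle'', though it is a mild one — is bookkeeping the interplay of the excluded parameter values across the three cited results: one must check that the set $\{\frac{2\theta_0}{3\theta_0-\pi},\ \frac{2\theta_0}{3\theta_0-2\pi},\ 2\}$ contains \emph{all} exceptional values arising, i.e.\ that for $\theta_0 \in (0,\pi)$ no other spectral resonance $\beta_p^2 = (i\pi/\theta_0)^2$ with $i \in \N_0$ produces an additional $p \in (1,\infty)$. This is handled by Remark \ref{remark_spectral_condition}(a): for $i = 0$ the equation $\beta_p = 0$ gives $p = \frac{2}{3} \notin (1,\infty)$; for $i \geq 3$ one has $3\theta_0 - i\pi < 0$, so $\frac{2\theta_0}{3\theta_0 - i\pi} < 0 \notin (1,\infty)$; thus only $i = 1, 2$ contribute, matching the excluded set exactly. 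With this observation in place the corollary follows immediately by concatenating Proposition \ref{isomorph_op_neum}, Corollary \ref{cor_higher_regularity}, and Lemma \ref{transfomationen}.
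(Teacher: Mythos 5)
Your proposal is correct and follows exactly the paper's route: the paper obtains this corollary precisely by combining Proposition~\ref{isomorph_op_neum}, Corollary~\ref{cor_higher_regularity}, Lemma~\ref{transfomationen} and Remark~\ref{remark_spectral_condition} with $\gamma = 0$, which is the chain you describe. Your additional bookkeeping that no further exceptional values of $p$ arise from the spectral condition (only $i = 1,\,2$ contribute, since $\beta_p \in (1,3)$ for $p \in (1,\infty)$) is exactly the content of Remark~\ref{remark_spectral_condition}~(a).
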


The next corollary generalizes the above result to time dependent data.

\begin{corollary} \label{cor_probl_zeit}
Let $p \in  (1, \infty) \setminus \{ \frac{2 \theta_0}{3 \theta_0- \pi},\ \frac{2 \theta_0}{3 \theta_0 - 2 \pi} ,\ 2\,  \} $, $\theta_0 \in (0, \pi)$ and $\rho= |(x_1, x_2)| $. Let $J=(0,T)$ with $0 < T < \infty$.
Then for every $f \in L^p(J, \widehat{W}^{1,p}( G))$ the equation
\begin{equation} \label{probl_laplace_neumann_zeit}
	 	\begin{array}{r@{\ =\ }lll}
	 		 \Delta \phi &  f  &\text{in} & J \times G, \\[0.5em]
	 		  \partial_\nu \phi  & 0& \text{on} & J \times \Gamma
	 	\end{array}
\end{equation}
has a unique solution $\phi$ satisfying
$$ \rho^{|\alpha|-3} \partial^{\alpha} \phi \in L^p(J, L^p(G)), \qquad |\alpha| \leq 3.$$
\end{corollary}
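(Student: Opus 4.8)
The plan is to deduce the time‑dependent statement from the stationary result of Corollary~\ref{cor_probl_summary} by solving \eqref{probl_laplace_neumann_zeit} separately for a.e.\ frozen time $t \in J$ and then checking that the resulting family of solutions depends measurably and $L^p$‑integrably on $t$. In other words, this is the standard device of lifting a stationary isomorphism to a Bochner space over $J$, and accordingly I expect no serious conceptual obstacle; the only points needing (routine) care are a measurability argument and an application of Fubini's theorem.

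First I would upgrade Corollary~\ref{cor_probl_summary} to the statement that the stationary Neumann--Laplace operator $A_L$ from \eqref{op_laplace_neumann} with $\gamma := 0$, regarded as a map from $D(A_L)$ — a closed subspace of $K^3_p(G)$ — onto $\widehat{W}^{1,p}(G)$, is a topological isomorphism. This follows immediately by composing the isomorphisms already at hand: by Lemma~\ref{transfomationen} one has $\widetilde{\Theta}^*_p \in \sLis(\widehat{W}^{1,p}(G), W^{1,p}(\Omega))$ and $\Theta^*_p \in \sLis(D(A_L), D(T_p))$ (where on $D(T_p)$ the $W^{2,p}$‑ and $W^{3,p}$‑norms are equivalent thanks to Corollary~\ref{cor_higher_regularity}), while by Proposition~\ref{isomorph_op_neum} one has $T_p \in \sLis(D(T_p), L^p(\Omega))$; here the values of $p$ excluded in the hypotheses are precisely those guaranteeing that the spectral condition \eqref{cond_eigenv} and the requirement $\gamma = 0 \neq p-2$ of Lemma~\ref{transfomationen} are met. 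Composing these three isomorphisms in accordance with the transformation identity \eqref{trans_laplace} (which reads $\widetilde{\Theta}^*_p \circ \Delta = -T_p \circ \Theta^*_p$ on $D(A_L)$) shows that $A_L$ is an isomorphism from $D(A_L)$ onto $\widehat{W}^{1,p}(G)$, and I would write $\cS := A_L^{-1}$ for its bounded inverse, so that $\cS f$ is the unique solution of \eqref{probl_laplace_neumann} and $\|\cS f\|_{K^3_p(G)} \leq C\|f\|_{\widehat{W}^{1,p}(G)}$.

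Next, given $f \in L^p(J, \widehat{W}^{1,p}(G))$, I would set $\phi(t) := \cS f(t)$ for a.e.\ $t \in J$. Since $\cS$ is bounded and linear and $f$ is strongly measurable, $t \mapsto \phi(t)$ is a strongly measurable $K^3_p(G)$‑valued function, and the pointwise bound $\|\phi(t)\|_{K^3_p(G)} \leq C\|f(t)\|_{\widehat{W}^{1,p}(G)}$, whose right‑hand side lies in $L^p(J)$, yields $\phi \in L^p(J, K^3_p(G))$. By Fubini's theorem this is equivalent to $\rho^{|\alpha|-3}\partial^\alpha \phi \in L^p(J, L^p(G))$ for all $|\alpha| \leq 3$, which is exactly the claimed regularity. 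Moreover, for a.e.\ $t$ one has $\phi(t) \in D(A_L)$, hence $\Delta\phi(t) = A_L\phi(t) = f(t)$ in $G$ and $\partial_\nu\phi(t) = 0$ on $\Gamma$; so $\phi$ solves \eqref{probl_laplace_neumann_zeit}.

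Finally, for uniqueness I would take a solution $\phi \in L^p(J, K^3_p(G))$ of \eqref{probl_laplace_neumann_zeit} with $f = 0$. By Fubini's theorem $\phi(t) \in K^3_p(G)$ for a.e.\ $t \in J$, and for a.e.\ such $t$ the function $\phi(t)$ solves the stationary problem \eqref{probl_laplace_neumann} with vanishing right‑hand side; the uniqueness part of Corollary~\ref{cor_probl_summary} (equivalently, the injectivity of $A_L = \cS^{-1}$) then forces $\phi(t) = 0$ for a.e.\ $t$, i.e.\ $\phi = 0$ in $L^p(J, K^3_p(G))$. The only mildly technical ingredients in this scheme are the strong measurability of $t \mapsto \cS f(t)$ (composition of a strongly measurable function with a bounded operator), the identification via Fubini of $L^p(J, K^3_p(G))$ with the space of $\phi$ satisfying $\rho^{|\alpha|-3}\partial^\alpha\phi \in L^p(J\times G)$ for $|\alpha| \leq 3$, and the fact that $D(A_L)$ is a closed subspace of $K^3_p(G)$ (which is automatic since $D(A_L) = (\Theta^*_p)^{-1}(D(T_p))$ and $D(T_p)$ is closed), so that $\cS$ genuinely maps into it.
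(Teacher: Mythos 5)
Your argument is correct and rests on the same pillar as the paper's proof: Corollary~\ref{cor_probl_summary} (equivalently, the chain $\widetilde{\Theta}^*_p$, $T_p^{-1}$ together with the higher-regularity estimate of Corollary~\ref{cor_higher_regularity}, and $\Theta^p_*$) applied at frozen times, combined with the uniform bound $\|\phi(t,\cdot)\|_{K^3_p(G)} \leq C \|f(t,\cdot)\|_{\widehat{W}^{1,p}(G)}$, which is raised to the $p$-th power and integrated over $J$. The execution differs in one respect: the paper first restricts to $f \in C^\infty(\overline{J \times G}) \cap L^p(J, \widehat{W}^{1,p}(G))$, so that $\phi(t,\cdot)$ is defined for every $t$ and no measurability discussion is needed, and then passes to general $f$ by a density/approximation argument; you instead package the stationary result as a bounded linear solution operator $\cS = A_L^{-1}$ and apply it directly to an arbitrary $f \in L^p(J, \widehat{W}^{1,p}(G))$, using that composition with a bounded operator preserves strong (Bochner) measurability. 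This removes the approximation step and also makes the uniqueness part explicit (freeze $t$, invoke injectivity of $A_L$), which the paper leaves implicit; the analytic input is identical in both routes. One small inaccuracy in your write-up: the $W^{2,p}$- and $W^{3,p}$-norms are of course not equivalent on all of $D(T_p)$ (a $W^{2,p}$-function with vanishing Neumann trace need not belong to $W^{3,p}(\Omega)$); what you actually need, and what Corollary~\ref{cor_higher_regularity} provides, is that $T_p^{-1}$ maps $W^{1,p}(\Omega)$ boundedly into $W^{3,p}(\Omega)$, so that the composed solution operator $\cS$ is bounded from $\widehat{W}^{1,p}(G)$ into $K^3_p(G)$; with this reformulation the rest of your argument goes through unchanged.
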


\begin{proof}
Assume that $f \in C^\infty(\overline{J \times G}) \cap L^p(J, \widehat{W}^{1,p}(G))$.
For every $t > 0$ let $\phi(t,\,\cdot\,) \in K^3_p(G)$ be the unique solution to the problem
$$\Delta \phi(t, \cdot) = f(t, \cdot) \quad \text{in} \ G, \qquad  \partial_\nu \phi(t, \cdot) =0 \quad \text{on} \ \Gamma,  $$
which exists due to the Corollary~\ref{cor_probl_summary}.
Now, we have
\begin{align*}
\| \phi \|^p_{L^p(J, K^3_p(G))} & = \int_0^T \| \phi(t,  \cdot) \|^p_{K^3_p(G)}  dt  \leq C^p \int_0^T \| f(t, \cdot) \|^p_{\widehat{W}^{1,p}(G)} dt = C^p \| f \|^p_{L^p(J, \widehat{W}^{1,p}( G))}
\end{align*}
for a constant $C>0$ which is independent of $u, \ f$  and $t > 0$.
This shows unique solvability of \eqref{probl_laplace_neumann_zeit} for a right-hand side $f \in C^\infty(\overline{J \times G}) \cap L^p(J, \widehat{W}^{1,p}(G))$.
Now, since the latter space is dense in $L^p(J, \widehat{W}^{1,p}(G))$, an approximation argument yields the assertion for every right-hand side $f \in L^p(J, \widehat{W}^{1,p}(G))$.
\end{proof}

\section{The Stokes Equations subject to Navier Boundary Conditions}\label{sec_navierslipbc}
Let $J=(0,T)$ with $0 < T < \infty$ and let $G \subset \R^2 $ be the wedge defined as in \eqref{def_wedge} with opening angle $\theta_0 \in (0, \pi)$.
The aim of this section is to prove Theorem~\ref{thm_navierslip}, that is the unique solvability of problem \eqref{problem_navierslipbc} in the $L^p$-setting for all \linebreak $p \in (1, \infty) \setminus \{\frac{2 \theta_0}{3 \theta_0 -  \pi},\ \frac{2 \theta_0}{3 \theta_0 - 2 \pi},\ \frac{3}{2},\ 2,\ 3 \}$.
We start with a proof of the well-posedness of the Stokes equations subject to inhomogeneous perfect slip boundary conditions. 

\subsection{Inhomogeneous Perfect Slip Boundary Conditions}
Let $\mE$ and $\mF$ be defined as in \eqref{solution_class} and \eqref{data_class}, respectively.
Here we consider the system
\begin{equation}\label{probl_perfect_slip}
	 	\begin{array}{r@{\ =\ }lll}
	 		 \partial_t u - \Delta u + \nabla p &  f &
			\text{in} & J \times G, \\[0.25em]
			\div u & g& \text{in } & J \times G ,\\[0.25em]
		    \text{curl} \ u &h_1 & \text{on} & J \times \Gamma , \\[0.25em]
	 		u \cdot \nu &h_0 & \text{on} & J \times \Gamma ,\\[0.25em]
			u(0) & u_0& \text{in} & G,
	 	\end{array}
\end{equation}
where the boundary of $G$ is decomposed as in \eqref{def_boundary} as $\partial G = \Gamma \cup \{\,0\,\}$
with its smooth part given as $\Gamma = \Gamma_1 \cup \Gamma_2$.
Recall that $(\tau,\,\nu) = (\tau_j,\,\nu_j)$ for $j = 1,\,2$ denotes the positively oriented pair of unit tangential and unit outer normal vector on $\Gamma_j$ as introduced in Section~\ref{sec_intro}.
Of course, the boundary conditions in \eqref{probl_perfect_slip} have to be understood as
\begin{align*}
	\text{curl}\ u &= h^{(1)}_1 \quad \text{on} \ J \times \Gamma_1, \\
	\text{curl}\ u &= h^{(2)}_1 \quad \text{on} \ J \times \Gamma_2, \\
	 u \cdot \nu_1 &= h^{(1)}_0 \quad \text{on} \ J \times \Gamma_1, \\
	 u \cdot \nu_2 &= h^{(2)}_0 \quad \text{on} \ J \times \Gamma_2,
\end{align*}
where $h^{(j)}_\ell = h_\ell|_{\Gamma_j}$ for $\ell = 0,\,1$ and $j = 1,\,2$.
We aim at solutions $(u,p) \in \mE$
and, hence, the given data in \eqref{probl_perfect_slip} have to satisfy the regularity conditions
$$(f,\ g, \ h_1, \ h_0,\ u_0) \in \mF. $$
In order to treat problem \eqref{probl_perfect_slip} we first need the following result concerning traces on the wedge domain $G$.

\begin{proposition}\label{prop_trace}
Let $J=(0,T)$ with $0 < T < \infty$ and let $G \subseteq \R^2$ be the wedge domain defined as in \eqref{def_wedge} with opening angle $\theta_0 \in (0, \pi)$.
Let $1<p<\infty$ with $p \neq 2$.
Furthermore, let $\Gamma_1 = (-\infty,\,0) \cdot \tau_1$ and $\Gamma_2 = (0,\,\infty) \cdot \tau_2$ with
\begin{equation*}
	\tau_1 = -e_1, \quad \nu_1 = -e_2, \quad \tau_2 = (\cos \theta_0,\,\sin \theta_0)^T, \quad \nu_2 = (- \sin \theta_0,\,\cos \theta_0)^T
\end{equation*}
such that $\partial G = \Gamma_1 \overset{.}{\cup} \Gamma_2 \overset{.}{\cup} \{ 0 \} $.
Now, suppose that
\begin{equation*}
	\begin{array}{rcll}
		g_j & \in & W^{1-1/2p}_p(J, L^p(\Gamma_j)) \cap L^p(J, W^{2- 1/p}_p(\Gamma_j)),     & \qquad j =1,\,2, \\[0.5em]
		h_j & \in & W^{1/2 - 1/2p}_p(J, L^p(\Gamma_j)) \cap L^p(J, W^{1- 1/p}_p(\Gamma_j)), & \qquad j =1,\,2,
	\end{array}
\end{equation*}
such that 
\begin{equation*}
	\begin{array}{rcll}
		  \langle g_1 \rangle_1 & = & \langle g_2 \rangle_2 & \quad \textrm{in}\ J, \\[0.25em]
		  \langle \partial_{\tau_1} g_1 \rangle_1 + \cos \theta_0 \cdot \langle \partial_{\tau_2} g_2 \rangle_2 & = & \sin \theta_0 \cdot \langle h_2 \rangle_2 & \quad \textrm{in}\ J, \quad \textrm{if}\ p > 2, \\[0.25em]
		- \langle \partial_{\tau_2} g_2 \rangle_2 - \cos \theta_0 \cdot \langle \partial_{\tau_1} g_1 \rangle_1 & = & \sin \theta_0 \cdot \langle h_1 \rangle_1 & \quad \textrm{in}\ J, \quad \textrm{if}\ p > 2.
	\end{array}
\end{equation*}
Then there exists a function $u \in W^{1,p}(J, L^p(G)) \cap L^p(J, W^{2,p}(G))$ that satisfies
\begin{equation*}
	\begin{array}{rclcrcll}
		u & = & g_1 & \quad \textrm{and} \quad & \partial_{\nu_1} u & = & h_1 & \quad \textrm{on}\ J \times \Gamma_1, \\[0.25em]
		u & = & g_2 & \quad \textrm{and} \quad & \partial_{\nu_2} u & = & h_2 & \quad \textrm{on}\ J \times \Gamma_2.
	\end{array}
\end{equation*}
\end{proposition}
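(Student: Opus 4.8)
The plan is to construct a bounded right inverse of the trace map $u\mapsto(u|_{\Gamma_1},\partial_{\nu_1}u|_{\Gamma_1},u|_{\Gamma_2},\partial_{\nu_2}u|_{\Gamma_2})$ on $\mX:=W^{1,p}(J,L^p(G))\cap L^p(J,W^{2,p}(G))$ by localizing and treating the corner of $G$ separately. First I would fix $\chi_0,\chi_1,\chi_2\in C^\infty(\overline G)$ with $\chi_0+\chi_1+\chi_2\equiv1$, where $\chi_0\equiv1$ near the corner and has compact support, while $\supp\chi_k$ (for $k=1,2$) lies in a neighbourhood of $\Gamma_k$ bounded away from $0$. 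Since multiplication by each $\chi_k$ is bounded on $\mX$ and on all data spaces, it suffices to treat data of type~(i): supported on a single edge, away from the corner; and of type~(ii): supported near the corner. For type~(i), near $\Gamma_k$ the relevant part of $\partial G$ is locally a straight line, so after flattening and reinserting a cutoff the classical anisotropic trace theorem for $W^{1,p}(J,L^p(\R^2_+))\cap L^p(J,W^{2,p}(\R^2_+))$ (cf.~\cite{Denk-Hieber-Pruess:Maximal-Regularity}) furnishes a bounded right inverse of $u\mapsto(u|_\partial,\partial_\nu u|_\partial)$, hence the required $u\in\mX$; no compatibility condition enters here.

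For type~(ii) I would first absorb the corner values. Put $a:=\langle g_1\rangle_1=\langle g_2\rangle_2$, which lies in $W^{1-1/p}_p(J)$ by the iterated trace theorem, and, when $p>2$, let $b\in\R^2$ (depending on $t$) solve the overdetermined system $b\cdot\tau_j=\langle\partial_{\tau_j}g_j\rangle_j$, $b\cdot\nu_j=\langle h_j\rangle_j$ for $j=1,2$. This system is consistent \emph{precisely} because of the two derivative compatibility conditions, which say that the gradient of $u$ at the corner has a single value in the two orthonormal frames $(\tau_j,\nu_j)$; note the traces $\langle h_j\rangle_j$, $\langle\partial_{\tau_j}g_j\rangle_j$ exist only because $p>2$. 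Since the map sending $u\in\mX$ to its value and (if $p>2$) gradient at the corner admits a bounded right inverse into $\mX$, I would pick $\ell\in\mX$ with corner value $a$ and (if $p>2$) corner gradient $b$, and replace $g_j$ by $g_j-(\chi_0\ell)|_{\Gamma_j}$ and $h_j$ by $h_j-\partial_{\nu_j}(\chi_0\ell)|_{\Gamma_j}$. After this reduction $\langle g_j\rangle_j=0$ for $j=1,2$ and, when $p>2$, also $\langle\partial_{\tau_j}g_j\rangle_j=\langle h_j\rangle_j=0$, so all compatibility conditions now hold homogeneously.

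It then remains to solve the model problem at the corner. Since $\theta_0<\pi$, the wedge $G$ is contained in the half-plane $H_1$ with $\partial H_1=\R\tau_1$ and in the half-plane $H_2$ with $\partial H_2=\R\tau_2$. I would extend $g_j,h_j$ by zero from $\Gamma_j$ to $\R\tau_j$: since $p\neq2$ and the corner data now vanishes to the order just arranged, zero-extension is bounded on $W^{2-1/p}_p$ and $W^{1-1/p}_p$ and on their time--space intersections. Lifting $(g_1,h_1)$ through $H_1$ by the right inverse of the first step and restricting to $G$ gives $v\in\mX$ with $v|_{\Gamma_1}=g_1$, $\partial_{\nu_1}v|_{\Gamma_1}=h_1$, and $v$ vanishing at the corner to the order of its data; lifting the residual $\Gamma_2$-data $g_2-v|_{\Gamma_2}$, $h_2-\partial_{\nu_2}v|_{\Gamma_2}$ (which again vanishes at the corner) through $H_2$ gives $\widetilde w\in\mX$ realizing it on $\Gamma_2$, although $\widetilde w|_{\Gamma_1}$, $\partial_{\nu_1}\widetilde w|_{\Gamma_1}$ need not vanish. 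To repair this I would use a cutoff in the angular variable: with $\zeta\in C^\infty([0,\theta_0])$, $\zeta\equiv0$ near $\theta=0$, $\zeta\equiv1$ near $\theta=\theta_0$, set $w:=\zeta(\theta)\widetilde w$ (and $w:=0$ at the corner). Then $u:=\chi_0\ell+v+w$ has all four prescribed traces, and the only nontrivial point left is $w\in\mX$: the time derivative and the zeroth and first order spatial derivatives are controlled directly, whereas the second order part requires, since $|\nabla^k(\zeta(\theta))|\le C\rho^{-k}$ near the corner, the bound $\|\rho^{-2}\widetilde w\|_{L^p(G)}+\|\rho^{-1}\nabla\widetilde w\|_{L^p(G)}\le C\|\nabla^2\widetilde w\|_{L^p(G)}$ uniformly in $t$, which is Hardy's inequality on the wedge (Lemma~\ref{lemma_hardy}), available because $\widetilde w$ vanishes at the corner to the order arranged above and $p\neq2$.

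The main obstacle is this last synthesis at the corner: producing one function on $G$ out of the two single-face extensions. The compatibility conditions are exactly the obstruction to be removed for the corner data to be liftable, and the exclusion $p\neq2$ together with the threshold $p>2$ --- under which the tangential corner traces exist and the zero-extension and Hardy estimates carry the needed vanishing --- are forced here; when $p<2$ the angular cutoff step needs extra care, relying only on the available zeroth order vanishing, but the overall scheme is unchanged.
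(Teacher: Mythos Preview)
Your argument is correct and arrives at the same conclusion, but the route differs from the paper's in two places.

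For the reduction to vanishing corner data, you subtract a function $\ell\in\mX$ with prescribed corner value and (for $p>2$) corner gradient, and you identify the two derivative compatibility conditions as precisely the consistency conditions for the overdetermined $2\times 4$ system for $b=\nabla u(0)$ in the two frames $(\tau_j,\nu_j)$. The paper instead extends $(g_1,h_1)$ to the full line $\Sigma_1=\R\tau_1$, lifts through the half-space, and then \emph{computes} that the residual $\Gamma_2$-data $g_2-v|_{\Gamma_2}$, $h_2-\partial_{\nu_2}v|_{\Gamma_2}$ has vanishing corner traces, using the compatibility conditions algebraically. Your version makes the geometric meaning of the compatibility conditions more transparent; the paper's is slightly more economical since the half-space lift is re-used.

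For the corner model problem with vanishing data, the approaches genuinely diverge. The paper transforms $G$ to the quarter plane $\widetilde G=(0,\infty)^2$ by the radial-angle diffeomorphism $\Phi$, invokes Amann's corner trace theorem \cite[Thm.~VIII.1.8.5]{Amann:Parabolic-Problems-2} on $\widetilde G$, and pulls back; the only analytic issue is that $\nabla^2\Phi\sim\rho^{-1}$, so one needs $\rho^{-1}\nabla\widetilde u\in L^p$, which is Hardy (Lemma~\ref{lemma_hardy}) applied once to $\nabla\widetilde u$. You instead stay on $G$, lift each face through its own half-space (using $\theta_0<\pi$ so that $G\subseteq H_1\cap H_2$), and glue with an angular cutoff $\zeta(\theta)$; since $|\nabla^k\zeta(\theta)|\sim\rho^{-k}$, you need both $\rho^{-1}\nabla\widetilde w$ and $\rho^{-2}\widetilde w$ in $L^p$, i.e.\ Hardy iterated. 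Your approach is self-contained and avoids the black-box quarter-plane theorem, at the price of a second Hardy application and the partition-of-unity localization step, which the paper does not need. Both approaches rely on $p\neq 2$ and on the arranged vanishing at the corner in exactly the same way.
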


\begin{proof}
{\itshape Step 1.}
We first show that we can w.\,l.\,o.\,g.\ assume that $\langle g_j \rangle_j = 0$ as well as $\langle \partial_{\tau_j} g_j \rangle_j = \langle h_j \rangle_j = 0$, if $p > 2$, for $j = 1,\,2$.
Indeed, there exist extensions
\begin{equation*}
	\begin{array}{rcl}
		\hat{g}_1 & \in & W^{1 - 1/2p}_p(J, L^p(\Sigma_1)) \cap L^p(J, W^{2 - 1/p}_p(\Sigma_1)),  \\[0.5em]
		\hat{h}_1 & \in & W^{1/2 - 1/2p}_p(J, L^p(\Sigma_1)) \cap L^p(J, W^{1 - 1/p}_p(\Sigma_1))
	\end{array}
\end{equation*}
of $g_1$ and $h_1$, respectively, to the hyperplane $\Sigma_1 := \R \cdot \tau_1$; cf.~\cite[Thm.\ 4.26]{Adams-Fournier:Sobolev-Spaces}.
Now, the trace theory for anisotropic function spaces on the halfspace implies that there exists
\begin{equation*}
	v \in W^{1, p}(J, L^p(\R \times (0, \infty))) \cap L^p(J, W^{2, p}(\R \times (0, \infty)))
\end{equation*}
such that $v = \hat{g}_1$ and $\partial_{\nu_1} v = \hat{h}_1$ on $J \times \Sigma_1$.
Then we set $u = v + \hat{u}$ and infer that $\hat{u} \in W^{1,p}(J, L^p(G)) \cap L^p(J, W^{2,p}(G))$ has to satisfy the boundary conditions
\begin{equation*}
	\begin{array}{rclcrcll}
		\hat{u} & = & 0         & \quad \textrm{and} \quad & \partial_{\nu_1} \hat{u} & = & 0         & \quad \textrm{on}\ J \times \Gamma_1, \\[0.25em]
		\hat{u} & = & \hat{g}_2 & \quad \textrm{and} \quad & \partial_{\nu_2} \hat{u} & = & \hat{h}_2 & \quad \textrm{on}\ J \times \Gamma_2
	\end{array}
\end{equation*}
for $\hat{g}_2 := g_2 - v|_{\Gamma_2}$ and $\hat{h}_2 := h_2 - \partial_{\nu_2} v$.
Due to the choice of $v$ and the compatibility conditions for the boundary data we have $\langle \hat{g}_2 \rangle_2 = \langle g_2 \rangle_2 - \langle g_1 \rangle_1 = 0$ and
\begin{equation*}
	\begin{array}{rcl}
		\langle \partial_{\tau_2} \hat{g}_2 \rangle_2
			& = & \langle \partial_{\tau_2} g_2 \rangle_2 - \langle \partial_{\tau_2} v \rangle_2 \\[0.5em]
			& = & - \cos \theta_0 \cdot \langle \partial_{\tau_1} g_1 \rangle_1 - \sin \theta_0 \cdot \langle h_1 \rangle_1 - \langle \partial_{\tau_2} v \rangle_2 \\[0.5em]
			& = & \cos \theta_0 \cdot \langle \partial_{x_1} v \rangle_\bullet + \sin \theta_0 \cdot \langle \partial_{x_2} v \rangle_\bullet - \langle \partial_{\tau_2} v \rangle_2 = 0, \qquad \textrm{if}\ p > 2,
	\end{array}
\end{equation*}
as well as
\begin{equation*}
	\begin{array}{rcl}
		\langle \hat{h}_2 \rangle_2
			& = & \langle h_2 \rangle_2 - \langle \partial_{\nu_2} v \rangle_2 \\[0.5em]
			& = & \frac{1}{\sin \theta_0} \big( \langle \partial_{\tau_1} g_1 \rangle_1 + \cos \theta_0 \cdot \langle \partial_{\tau_2} g_2 \rangle_2 \big) - \langle \partial_{\nu_2} v \rangle_\bullet \\[0.5em]
			& = & \frac{1}{\sin \theta_0} \big( \langle \partial_{\tau_1} g_1 \rangle_1 - \cos^2 \theta_0 \cdot \langle \partial_{\tau_1} g_1 \rangle_1 - \sin \theta_0 \cdot \cos \theta_2 \cdot \langle h_1 \rangle_1 \big) - \langle \partial_{\nu_2} v \rangle_2 \\[0.5em]
			& = & \sin \theta_0 \cdot \langle \partial_{\tau_1} g_1 \rangle_1 - \cos \theta_0 \cdot \langle h_1 \rangle_1 - \langle \partial_{\nu_2} v \rangle_2 \\[0.5em]
			& = & - \sin \theta_0 \cdot \langle \partial_{x_1} v \rangle_\bullet + \cos \theta_0 \cdot \langle \partial_{x_2} v \rangle_\bullet - \langle \partial_{\nu_2} v \rangle_2 = 0, \qquad \textrm{if}\ p > 2.
	\end{array}
\end{equation*}
Hence, $\langle \partial_{\tau_2} \hat{g}_2 \rangle_2 = \langle \hat{h}_2 \rangle_2 = 0$, if $p > 2$.

{\itshape Step 2.}
Now, assume that $\langle g_j \rangle_j = 0$ as well as $\langle \partial_{\tau_j} g_j \rangle_j = \langle h_j \rangle_j = 0$, if $p > 2$, for $j = 1,\,2$.
Let $\widetilde{G} :=  (0, \infty)^2$ be the wedge domain with opening angle $\frac{\pi}{2}$.
Here we set $\widetilde{\Gamma}_1 := \Gamma_1$ and $\widetilde{\Gamma}_2 := \{\,0\,\} \times (0, \infty)$ to obtain the decomposition $\partial \widetilde{G} = \widetilde{\Gamma}_1 \overset{.}{\cup} \widetilde{\Gamma}_2 \overset{.}{\cup} \{\,0\,\}$ of the boundary of $\widetilde{G}$.
We abbreviate $\rho := |x| = |(x_1,x_2)|$ for $x \in \R^2$ and define a transformation
$$\Phi: G \rightarrow \widetilde{G}, \qquad \Phi(x_1, x_2) = \left( \rho \cos \left( \frac{\pi}{2\theta_0} \arccos \left(\frac{x_1}{\rho}\right) \right), \ \rho \sin \left( \frac{\pi}{2\theta_0} \arccos \left(\frac{x_1}{\rho} \right) \right) \right).$$
It is not difficult to see that $\Phi: G \rightarrow \widetilde{G}$ is a $C^\infty$-diffeomorphism.
We set $\widetilde{g}_1 := g_1$, $\widetilde{h}_1 := h_1$ as well as
\begin{equation*}
	\widetilde{g}_2(t,s e_2) := g_2(t,  s \tau_2), \qquad \widetilde{h}_2(t,s e_2) := h_2(t,  s \tau_2), \qquad t \in J,\ s > 0.
\end{equation*}
Then we have 
\begin{equation*}
	\begin{array}{rcll}
		\widetilde{g}_j & \in & W^{1-1/2p}_p(J, L^p(\widetilde{\Gamma}_j)) \cap L^p(J, W^{2- 1/p}_p(\widetilde{\Gamma}_j)),     & \qquad j =1,\,2, \\[0.5em]
		\widetilde{h}_j & \in & W^{1/2 - 1/2p}_p(J, L^p(\widetilde{\Gamma}_j)) \cap L^p(J, W^{1- 1/p}_p(\widetilde{\Gamma}_j)), & \qquad j =1,\,2,
	\end{array}
\end{equation*}
and $\lim_{s \rightarrow 0} \widetilde{g}_j(t, s e_j) = 0$ as well as $\lim_{s \rightarrow 0} \partial_{x_j} \widetilde{g}_j(t, s e_j) = \lim_{s \rightarrow 0} \widetilde{h}_j(t, s e_j) = 0$, if $p > 2$, for $t \in J$ and $j = 1,\,2$.
Now, we apply \cite[Theorem VIII.1.8.5]{Amann:Parabolic-Problems-2}, which shows that there exists $\widetilde{u} \in W^{1,p}(J, L^p(\widetilde{G})) \cap L^p(J, W^{2,p}(\widetilde{G}))$ satisfying
\begin{equation*}
	\begin{array}{rclcrcll}
		\widetilde{u} & = & \widetilde{g}_1 & \quad \textrm{and} \quad & \partial_{\nu_1} \widetilde{u} & = & \widetilde{h}_1 & \quad \textrm{on}\ J \times \widetilde{\Gamma}_1, \\[0.25em]
		\widetilde{u} & = & \widetilde{g}_2 & \quad \textrm{and} \quad & \partial_{\nu_2} \widetilde{u} & = & \widetilde{h}_2 & \quad \textrm{on}\ J \times \widetilde{\Gamma}_2.
	\end{array}
\end{equation*}
Finally, we set $u = \widetilde{u} \circ \Phi \in W^{1,p}(J, L^p(G)) \cap L^p(J, W^{2,p}(G))$.
By construction, $u$ satisfies all desired boundary conditions.
Note that we indeed have $u \in L^p(J, W^{2,p}(G))$, which can be seen as follows:
We have $\partial_j \Phi \sim \rho^0$ as $\rho \rightarrow 0$ or $\rho \rightarrow \infty$ for $j = 1,\,2$ for the first derivatives of $\Phi$
and $\partial_j \partial_k \Phi \sim \rho^{-1}$ as $\rho \rightarrow 0$ and $\rho \rightarrow \infty$ for $j,\,k = 1,\,2$ for the second derivatives of $\Phi$,
i.\,e.\ $\partial_j \Phi_n,\ \rho \partial_j \partial_k \Phi_n \in L^\infty(G)$ for $j,\,k,\,n = 1,\,2$.
Moreover, $\det \nabla \Phi \equiv \frac{\pi}{2 \theta_0}$.
However, the chain rule shows that
\begin{equation*}
	\partial_j \partial_k (\widetilde{u} \circ \Phi)
		= \!\! \sum^2_{m, n = 1} \!\! \big( (\partial_m \partial_n \widetilde{u}) \circ \Phi \big) \partial_j \Phi_m \partial_k \Phi_n + \sum^2_{n = 1} \big( (\partial_n \widetilde{u}) \circ \Phi \big) \partial_j \partial_k \Phi_n,
			\quad j,\,k = 1,\,2
\end{equation*}
and we have $\rho^{-1} \partial_j \widetilde{u} \in L^p(J, L^p(\widetilde{G}))$ for $j = 1,\,2$ due to Hardy's inequality; cf.~Lemma~\ref{lemma_hardy}.
Note that by construction we have $\partial_j \widetilde{u}(\,\cdot\,,0) = 0$ in $J$ for $j = 1,\,2$, if $p > 2$,
since $\lim_{s \rightarrow 0} \widetilde{h}_j(t, s e_j) = 0$, for $t \in J$ and $j = 1,\,2$, if $p > 2$.
\end{proof}

\begin{remark}\label{rem_trace}
(a) For $\theta_0 = \frac{\pi}{2}$ we have $\cos \theta_0 = 0$ and $\sin \theta_0 = 1$
as well as $\tau_1 = - e_1$, \linebreak $\nu_1 = - e_2$, $\tau_2 = e_2$ and $\nu_2 = -e_1$.
In this case the compatibility conditions in Proposition~\ref{prop_trace} read
\begin{equation*}
	\begin{array}{rcll}
		  \langle g_1 \rangle_1 & = & \langle g_2 \rangle_2 & \qquad \textrm{in}\ J, \\[0.25em]
		- \langle \partial_{x_1} g_1 \rangle_1 & = & \langle h_2 \rangle_2 & \qquad \textrm{in}\ J, \quad \textrm{if}\ p > 2, \\[0.25em]
		- \langle \partial_{x_2} g_2 \rangle_2 & = & \langle h_1 \rangle_1 & \qquad \textrm{in}\ J, \quad \textrm{if}\ p > 2.
	\end{array}
\end{equation*}
These are precisely the compatibility conditions \cite[(VIII.1.8.7) \& (VIII.1.8.8)]{Amann:Parabolic-Problems-2}.
This is not surprising, since for $\theta_0 = \frac{\pi}{2}$ Proposition~\ref{prop_trace} is a special case of \cite[Thm.\ VIII.1.8.5]{Amann:Parabolic-Problems-2}.

(b) The value $p = 2$ is excluded in Proposition~\ref{prop_trace} due to technical reasons, only.
Indeed, for the space $W^2_p(G)$ instead of $W^{1,p}(J, L^p(G)) \cap L^p(J, W^{2,p}(G))$ a similar result is already available also for $p = 2$.
For instance, \cite[Theorem~1.5.2.8]{Grisvard:Elliptic-Problems}, which is formulated for a bounded domain with curvilinear boundary,
and its proof can obviously be transferred to the wedge domain $G$.
However, the compatibility conditions that arise for $p \neq 2$ in form of {\itshape continuity conditions} as in Proposition~\ref{prop_trace}
and \cite[Theorem~1.5.2.8]{Grisvard:Elliptic-Problems}~(a) have to be replaced by {\itshape integrability conditions} as in \cite[Theorem~1.5.2.8]{Grisvard:Elliptic-Problems}~(b) for $p = 2$.
The phenomenon that the case $p = 2$ requires different compatibility conditions is also related to the fact that Hardy's inequality,
which is used in the proof of Proposition~\ref{prop_trace} in form of Lemma ~\ref{lemma_hardy}, is not available for $p = 2$.
\end{remark}

\begin{corollary}\label{cor_trace}
Let $J=(0,T)$ with $0 < T < \infty$ and let $G \subset \R^2$ be the wedge domain defined as in \eqref{def_wedge} with opening angle $\theta_0 \in (0, \pi)$.
Let $1<p<\infty$ with $p \neq 2$.
Furthermore, let $\Gamma_1 = (-\infty,\,0) \cdot \tau_1$ and $\Gamma_2 = (0,\,\infty) \cdot \tau_2$ with
\begin{equation*}
	\tau_1 = -e_1, \quad \nu_1 = -e_2, \quad \tau_2 = (\cos \theta_0,\,\sin \theta_0)^T, \quad \nu_2 = (- \sin \theta_0,\,\cos \theta_0)^T
\end{equation*}
such that $\partial G = \Gamma_1 \overset{.}{\cup} \Gamma_2 \overset{.}{\cup} \{ 0 \}$ and set $\Gamma = \Gamma_1 \cup \Gamma_2$.
Now, suppose that
\begin{equation*}
	\begin{array}{rcll}
		h^{(j)}_0 & \in & W^{1-1/2p}_p(J, L^p(\Gamma_j)) \cap L^p(J, W^{2- 1/p}_p(\Gamma_j)),     & \qquad j =1,\,2, \\[0.5em]
		h^{(j)}_1 & \in & W^{1/2 - 1/2p}_p(J, L^p(\Gamma_j)) \cap L^p(J, W^{1- 1/p}_p(\Gamma_j)), & \qquad j =1,\,2,
	\end{array}
\end{equation*}
such that $\jump{h_1} = 0$ in $J$, if $p > 2$, and
\begin{equation*}
	\langle \partial_{\tau_1} h_0 \rangle_1 + \langle \partial_{\tau_2} h_0 \rangle_2 = \trace{h_1} \quad \textrm{in}\ J,
		\qquad \textrm{if}\ \theta_0 = {\textstyle \frac{\pi}{2}}\ \textrm{and}\ p > 2.
\end{equation*}
Then there exists a function $u \in W^{1,p}(J, L^p(G, \R^2)) \cap L^p(J, W^{2,p}(G, \R^2))$ that satisfies
\begin{equation}\label{prob_curl_bc}
	u \cdot \nu = h_0 \quad \textrm{and} \quad \mbox{curl}\,u = h_1 \quad \textrm{on}\ J \times \Gamma.
\end{equation}
\end{corollary}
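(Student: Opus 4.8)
The plan is to reduce the two scalar conditions in \eqref{prob_curl_bc} to the Dirichlet--Neumann trace problem of Proposition~\ref{prop_trace}, applied to the two Cartesian components of $u$ separately. The link is the boundary identity
\begin{equation*}
	\mathrm{curl}\,u = \partial_{\tau_j}(u \cdot \nu_j) - \partial_{\nu_j}(u \cdot \tau_j) \qquad \textrm{on}\ J \times \Gamma_j, \quad j = 1,\,2,
\end{equation*}
valid for every $u \in W^{1,p}(J, L^p(G, \R^2)) \cap L^p(J, W^{2,p}(G, \R^2))$; it follows from $\nu_j = R_{\pi/2}\tau_j$ (recall that $(\tau_j, \nu_j)$ is positively oriented) together with the flatness of $\Gamma_j$, so that no curvature term occurs. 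Hence \eqref{prob_curl_bc} is the same as prescribing, on each $\Gamma_j$, the normal component $u \cdot \nu_j = h_0^{(j)}$ of the Dirichlet trace of $u$ and the tangential component $\partial_{\nu_j}(u \cdot \tau_j) = \partial_{\tau_j} h_0^{(j)} - h_1^{(j)} =: \phi_j$ of its normal derivative, while the tangential component $u \cdot \tau_j$ of the Dirichlet trace and the normal component $\partial_{\nu_j}(u \cdot \nu_j)$ of the normal derivative remain free.

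First I would check that the prescribed data lie in the classes required by Proposition~\ref{prop_trace}: $h_0^{(j)}$ is in the Dirichlet class by hypothesis, and $\phi_j$ is in the Neumann class $W^{1/2 - 1/2p}_p(J, L^p(\Gamma_j)) \cap L^p(J, W^{1 - 1/p}_p(\Gamma_j))$, because $h_1^{(j)}$ is by hypothesis and $\partial_{\tau_j} h_0^{(j)}$ is as well, since $h_0^{(j)}$ lies in $W^{1-1/2p}_p(J, L^p(\Gamma_j)) \cap L^p(J, W^{2-1/p}_p(\Gamma_j))$, which is one spatial order above the Neumann class (anisotropic mixed-derivative embedding). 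It then remains to choose the free data $u \cdot \tau_j$ and $\partial_{\nu_j}(u \cdot \nu_j)$ in the respective classes so that the resulting full Dirichlet data $g_j := (u \cdot \tau_j)\tau_j + h_0^{(j)}\nu_j$ and full Neumann data $\tilde h_j := \phi_j \tau_j + (\partial_{\nu_j}(u \cdot \nu_j))\nu_j$ satisfy the corner compatibility conditions of Proposition~\ref{prop_trace} for each of $u_1$ and $u_2$; once this is done, Proposition~\ref{prop_trace} applied componentwise produces $u$ with $u|_{\Gamma_j} = g_j$ and $\partial_{\nu_j} u|_{\Gamma_j} = \tilde h_j$, and the identity above shows that this $u$ solves \eqref{prob_curl_bc}.

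The core of the argument is the corner bookkeeping. Conditions $(1)$ and $(2)$--$(3)$ of Proposition~\ref{prop_trace}, imposed on both components of $g_j$ and $\tilde h_j$, say exactly that there exist a vector $a \in \R^2$ (the prospective value $u(0)$) and a matrix $M \in \R^{2 \times 2}$ (the prospective Jacobian $\nabla u(0)$) with $\langle g_j \rangle_j = a$, $\langle \partial_{\tau_j} g_j \rangle_j = M\tau_j$ and $\langle \tilde h_j \rangle_j = M \nu_j$ for $j = 1,\,2$. The condition on $a$ forces $a \cdot \nu_j = \langle h_0^{(j)} \rangle_j$, which determines $a$ uniquely since $\nu_1, \nu_2$ are linearly independent, and then $\langle u \cdot \tau_j \rangle_j = a \cdot \tau_j$ is fixed; the conditions on $M$ reduce, after contracting with $\tau_j$ and $\nu_j$, to the four scalar equations $\nu_j^T M \tau_j = \langle \partial_{\tau_j} h_0^{(j)} \rangle_j$ and $\tau_j^T M \nu_j = \langle \partial_{\tau_j} h_0^{(j)} \rangle_j - \langle h_1^{(j)} \rangle_j$ ($j = 1,\,2$), the remaining entries $\tau_j^T M \tau_j$ and $\nu_j^T M \nu_j$ being unconstrained, i.e. realizable by a one-dimensional lifting of the free data. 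Since $\nu_j^T M \tau_j - \tau_j^T M \nu_j = M_{21} - M_{12}$ is independent of $j$ (it is $\mathrm{curl}$ at the corner), the four linear functionals $M \mapsto \nu_j^T M \tau_j$, $M \mapsto \tau_j^T M \nu_j$ always obey one relation, and the four equations have a solution $M$ if and only if $\langle h_1^{(1)} \rangle_1 = \langle h_1^{(2)} \rangle_2$, i.e. $\jump{h_1} = 0$; this is exactly the compatibility condition imposed in the statement for $p > 2$, and it is necessary, since for $p > 2$ the function $\mathrm{curl}\,u \in L^p(J, W^{1,p}(G))$ is continuous up to the corner, so $h_1$ must have a single corner trace there. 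When $\theta_0 = \frac{\pi}{2}$ the rank of the four functionals drops to two (geometrically, $\nu_1 \perp \nu_2$ and $\tau_j \parallel \nu_{3-j}$), and the resulting second relation forces the additional condition $\langle \partial_{\tau_1} h_0 \rangle_1 + \langle \partial_{\tau_2} h_0 \rangle_2 = \trace{h_1}$. For $1 < p < 2$, conditions $(2)$--$(3)$ of Proposition~\ref{prop_trace} are vacuous, so only $\langle g_1 \rangle_1 = \langle g_2 \rangle_2$, i.e. the choice $\langle g_j \rangle_j = a$, is needed and no compatibility condition appears --- precisely as in the statement. Thus, under the stated hypotheses, $a$ and $M$ exist, the free data can be constructed with the prescribed corner value and corner tangential derivative by standard one-dimensional trace theory on $\Gamma_j$, and Proposition~\ref{prop_trace} closes the argument.

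The main obstacle is this corner analysis: faithfully translating the Cartesian-coordinate compatibility conditions of Proposition~\ref{prop_trace} into the intrinsic conditions on $h_0$ and $h_1$, and in particular tracking the rank drop of the four corner functionals at $\theta_0 = \frac{\pi}{2}$, which is responsible for the extra compatibility condition there. A secondary, purely technical point is the anisotropic mixed-derivative embedding invoked above to place $\partial_{\tau_j} h_0^{(j)}$ in the correct Neumann trace class.
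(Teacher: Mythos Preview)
Your proposal is correct and follows the same strategy as the paper: rewrite $\mathrm{curl}\,u = \partial_\tau(u\cdot\nu) - \partial_\nu(u\cdot\tau)$ so that \eqref{prob_curl_bc} prescribes the $\nu$-component of the Dirichlet trace and the $\tau$-component of the Neumann trace of $u$, fill in the two free components so that the corner compatibility of Proposition~\ref{prop_trace} is met, and apply that proposition componentwise. The only difference is in presentation. The paper writes down explicit formulas for the auxiliary free data $\widetilde h_0,\widetilde h_1,g_j$ and then verifies the three corner conditions of Proposition~\ref{prop_trace} by direct substitution; you instead encode the corner problem as the solvability of a linear system for a pair $(a,M)\in\R^2\times\R^{2\times2}$ (the prospective values of $u(0)$ and $\nabla u(0)$), and read off from the rank of that system exactly the hypotheses $\jump{h_1}=0$ and, at $\theta_0=\pi/2$, the additional relation on $h_0,h_1$. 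Your framing has the advantage of explaining \emph{why} those particular compatibility conditions appear --- they are precisely the obstructions to the linear system for $M$ --- whereas the paper's explicit construction is more self-contained and avoids the abstract step of lifting prescribed corner values and corner tangential derivatives to the free data.
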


\begin{proof}
First note that for $v \in W^{1,p}(J, L^p(G, \R^2)) \cap L^p(J, W^{2,p}(G, \R^2))$ we have
\begin{equation*}
	\text{curl}\,v = \partial_1 v_2 - \partial_2 v_1 = \partial_\tau (v \cdot \nu) - \partial_\nu (v \cdot \tau) \quad \text{on} \ J \times \Gamma.
\end{equation*}
Hence, if $v \cdot \nu = h_0$ and $\mbox{curl}\,v = h_1$ on $J \times \Gamma$, then $\partial_\nu (v \cdot \tau) = \partial_\tau h_0 - h_1$ on $J \times \Gamma$.

Now, we choose $g_j \in W^{1-1/2p}_p(J, L^p(\Gamma_j)) \cap L^p(J, W^{2- 1/p}_p(\Gamma_j))$ for $j = 1,\,2$ such that
\begin{equation*}
	\begin{array}{rcrl}
		\cos \theta_0 \cdot \partial_{\tau_1} g_1 & = & - (1 - \cos \theta_0) \cdot \partial_{\tau_1} h^{(1)}_0 + \frac{1}{2} \sin^2 \theta_0 \cdot h^{(1)}_1 & \qquad \textrm{at}\ J \times \{ 0 \}, \\[0.5em]
		\cos \theta_0 \cdot \partial_{\tau_2} g_2 & = &   (1 - \cos \theta_0) \cdot \partial_{\tau_2} h^{(2)}_0 - \frac{1}{2} \sin^2 \theta_0 \cdot h^{(2)}_1 & \qquad \textrm{at}\ J \times \{ 0 \}
	\end{array}
\end{equation*}
and $g_j(\,\cdot\,,0) = 0$ in $J$ for $j = 1,\,2$, if $\theta_0 \neq \frac{\pi}{2}$ and $p > 2$, and $g_j := 0$ for $j = 1,\,2$, if $\theta_0 = \frac{\pi}{2}$.

Next, we define $\widetilde{h}^{(j)}_0 \in W^{1-1/2p}_p(J, L^p(\Gamma_j)) \cap L^p(J, W^{2- 1/p}_p(\Gamma_j))$ for $j = 1,\,2$ as
\begin{equation*}
	\begin{array}{rcll}
		\sin \theta_0 \cdot \widetilde{h}^{(1)}_0(t, - s \tau_1) & := &   h^{(2)}_0(t,   s \tau_2) + \cos \theta_0 \cdot h^{(1)}_0(t, - s \tau_1) + g_1(t, - s \tau_1), & \quad t \in J,\ s > 0, \\[0.5em]
		\sin \theta_0 \cdot \widetilde{h}^{(2)}_0(t,   s \tau_2) & := & - h^{(1)}_0(t, - s \tau_1) - \cos \theta_0 \cdot h^{(2)}_0(t,   s \tau_2) + g_2(t,   s \tau_2), & \quad t \in J,\ s > 0,
	\end{array}
\end{equation*}
and $H_0 \in W^{1-1/2p}_p(J, L^p(\Gamma, \R^2)) \cap L^p(J, W^{2- 1/p}_p(\Gamma, \R^2))$ as $H_0 := \widetilde{h}_0 \cdot \tau + h_0 \cdot \nu$.
By construction we then have $H_0 \cdot \nu = h_0$ on $J \times \Gamma$.

Finally, we define $\widetilde{h}^{(j)}_1 \in W^{1/2 - 1/2p}_p(J, L^p(\Gamma_j)) \cap L^p(J, W^{1 - 1/p}_p(\Gamma_j))$ for $j = 1,\,2$ as
\begin{equation*}
	\begin{array}{rcll}
		\sin \theta_0 \cdot \widetilde{h}^{(1)}_1(t, - s \tau_1) & := & (\partial_{\tau_2} g_2)(t,   s \tau_2) + (1 - \cos \theta_0) \cdot (\partial_{\tau_1} h^{(1)}_0)(t, - s \tau_1), & \qquad t \in J,\ s > 0, \\[0.5em]
		\sin \theta_0 \cdot \widetilde{h}^{(2)}_1(t,   s \tau_2) & := & (\partial_{\tau_1} g_1)(t, - s \tau_1) - (1 - \cos \theta_0) \cdot (\partial_{\tau_2} h^{(2)}_0)(t,   s \tau_2), & \qquad t \in J,\ s > 0,
	\end{array}
\end{equation*}
and $H_1 \in W^{1/2 - 1/2p}_p(J, L^p(\Gamma, \R^2)) \cap L^p(J, W^{1 - 1/p}_p(\Gamma, \R^2))$ as $H_1 := (\partial_\tau h_0 - h_1) \cdot \tau + \widetilde{h}_1 \cdot \nu$.
By construction we then have $H_1 \cdot \tau = \partial_\tau h_0 - h_1$ on $J \times \Gamma$.

Now, it is readily checked that
\begin{equation*}
	\begin{array}{rcll}
		  \langle H_0 \rangle_1 & = & \langle H_0 \rangle_2 & \quad \textrm{in}\ J, \\[0.25em]
		  \langle \partial_{\tau_1} H_0 \rangle_1 + \cos \theta_0 \cdot \langle \partial_{\tau_2} H_0 \rangle_2 & = & \sin \theta_0 \cdot \langle H_1 \rangle_2 & \quad \textrm{in}\ J, \quad \textrm{if}\ p > 2, \\[0.25em]
		- \langle \partial_{\tau_2} H_0 \rangle_2 - \cos \theta_0 \cdot \langle \partial_{\tau_1} H_0 \rangle_1 & = & \sin \theta_0 \cdot \langle H_1 \rangle_1 & \quad \textrm{in}\ J, \quad \textrm{if}\ p > 2,
	\end{array}
\end{equation*}
Therefore, due to Proposition~\ref{prop_trace} there exists $u \in W^{1,p}(J, L^p(G, \R^2)) \cap L^p(J, W^{2,p}(G, \R^2))$ that satisfies
\begin{equation*}
	u = H_0 \quad \textrm{and} \quad \partial_\nu u = H_1 \quad \textrm{on}\ J \times \Gamma.
\end{equation*}
By construction this function satisfies the desired boundary conditions.
\end{proof}

\begin{remark}\label{rem_trace_curl}
For $\theta_0 = \frac{\pi}{2}$ we have $\cos \theta_0 = 0$ and $\sin \theta_0 = 1$
as well as $\tau_1 = - e_1$, $\nu_1 = - e_2$, $\tau_2 = e_2$ and $\nu_2 = -e_1$.
In this case the compatibility conditions in Corollary~\ref{cor_trace} read
\begin{equation*}
	\begin{array}{rcll}
		  \jump{h_1} & = & 0 & \qquad \textrm{in}\ J, \quad \textrm{if}\ p > 2, \\[0.25em]
		- \langle \partial_{x_1} h_0 \rangle_1 + \langle \partial_{x_2} h_0 \rangle_2 & = & \trace{h_1} & \qquad \textrm{in}\ J, \quad \textrm{if}\ p > 2,
	\end{array}
\end{equation*}
which explains the additional compatibility condition between $h_0$ and $h_1$ that is necessary in this case:
\begin{equation*}
	- \langle \partial_{x_1} h_0 \rangle_1 + \langle \partial_{x_2} h_0 \rangle_2
		= \langle \partial_{x_1} u_2 \rangle_1 - \langle \partial_{x_2} u_1 \rangle_2
		= \langle \text{curl}\,u \rangle_\bullet
		= \trace{h_1}, \quad \textrm{if}\ p > 2,
\end{equation*}
for every $u \in W^{1,p}(J, L^p(G, \R^2)) \cap L^p(J, W^{2,p}(G, \R^2))$
that satisfies $u \cdot \nu = h_0$ as well as $\text{curl}\,u = h_1$ on $J \times \Gamma$.
\end{remark}

The next auxiliary result is important,
since it allows for the inhomogeneous divergence constraint in problem \eqref{probl_perfect_slip}.

\begin{proposition}\label{prop_div_eq}
Let $J=(0,T)$ with $0 < T < \infty$ and let $G \subset \R^2$ be the wedge domain defined as in \eqref{def_wedge} with opening angle $\theta_0 \in (0, \pi)$
and let $\Gamma = \partial G \setminus \{ 0 \}$.
Assume that $p \in (1, \infty) \setminus \{ \frac{2 \theta_0}{3 \theta_0- \pi},\ \frac{2 \theta_0}{3 \theta_0 - 2 \pi},\ 2\,\}$.
Then for each
\begin{equation*}
	g \in W^{1,p}(J, \widehat{W}^{-1,p}(G)) \cap L^p(J, \widehat{W}^{1,p}(G))
\end{equation*}
there exists a function $u \in W^{1,p}(J, L^p(G, \R^2)) \cap L^p(J, W^{2,p}(G, \R^2))$ such that
\begin{equation}\label{prob_div}
	 	\begin{array}{r@{\ =\ }lll}
	 		 \div u &  g & \quad \text{in} & J \times G, \\[0.25em]
	 		 \text{curl} \ u =0, \ u \cdot \nu  & 0& \quad \text{on} & J \times \Gamma .\
	 	\end{array}
\end{equation}
\end{proposition}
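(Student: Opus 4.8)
The plan is to solve \eqref{prob_div} by a scalar potential. One seeks $u$ in the form $u := \nabla \phi$, where, for (almost) every $t$, the function $\phi(t,\cdot\,)$ solves the inhomogeneous Neumann--Laplace problem
\[
  \Delta \phi = g \quad \text{in } J \times G, \qquad \partial_\nu \phi = 0 \quad \text{on } J \times \Gamma .
\]
For any such $u$ one automatically has $\operatorname{curl} u = \partial_1 \partial_2 \phi - \partial_2 \partial_1 \phi = 0$ (everywhere, hence on $\Gamma$), $u \cdot \nu = \partial_\nu \phi = 0$ on $J \times \Gamma$, and $\div u = \Delta \phi = g$, so that \eqref{prob_div} holds as soon as $\phi$ is constructed with $\nabla \phi \in W^{1,p}(J, L^p(G, \R^2)) \cap L^p(J, W^{2,p}(G, \R^2))$. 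Thus the whole content of the proposition is a regularity assertion for $\nabla\phi$, and the two hypotheses on $g$ will be used separately: $g \in L^p(J, \widehat{W}^{1,p}(G))$ for the top spatial derivatives, and $g \in W^{1,p}(J, \widehat{W}^{-1,p}(G))$ for the lower-order terms and the time derivative.

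For the spatial regularity I would invoke Corollary~\ref{cor_probl_zeit}: since $g \in L^p(J, \widehat{W}^{1,p}(G))$ it yields $\phi \in L^p(J, K^3_p(G))$, and in particular $\nabla^2 u = \nabla^3 \phi \in L^p(J, L^p(G))$ (this is where the exceptional values $p \in \{ \tfrac{2\theta_0}{3\theta_0 - \pi},\, \tfrac{2\theta_0}{3\theta_0 - 2\pi},\, 2 \}$ must be excluded, through the spectral condition \eqref{cond_eigenv} and Hardy's inequality, Lemma~\ref{lemma_hardy}). To control the lower-order terms I would establish, by the same transformation to the layer $\Omega$ and the same operator-sum scheme as in Section~\ref{sectionlaplaceneumann} but one rung lower in the Sobolev/weighted scale, that the Neumann--Laplace operator $\eta\mapsto\Delta\eta$ is an isomorphism from $\{\eta\in\widehat{W}^{1,p}(G)/\R:\partial_\nu\eta=0\}$ onto $\widehat{W}^{-1,p}(G)$, consistently with Corollary~\ref{cor_probl_summary} (the solution being unique up to a constant, so $\nabla\phi$ is unambiguous). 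Since $g\in W^{1,p}(J,\widehat{W}^{-1,p}(G))\hookrightarrow L^p(J,\widehat{W}^{-1,p}(G))$, this gives $u=\nabla\phi\in L^p(J,L^p(G,\R^2))$; interpolating $u\in L^p(J,L^p(G))$ with $\nabla^2 u\in L^p(J,L^p(G))$ on the (convex) wedge $G$ then yields $\nabla u\in L^p(J,L^p(G))$, hence $u\in L^p(J,W^{2,p}(G,\R^2))$. For the time regularity, differentiating the equation in $t$ shows that $\partial_t u=\nabla\partial_t\phi$ solves the Neumann--Laplace problem with right-hand side $\partial_t g\in L^p(J,\widehat{W}^{-1,p}(G))$, so the same isomorphism gives $\partial_t u\in L^p(J,L^p(G,\R^2))$, i.e.\ $u\in W^{1,p}(J,L^p(G,\R^2))$; as usual one argues first for smooth $g$ (as in the proof of Corollary~\ref{cor_probl_zeit}) and passes to the limit.

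The main obstacle is precisely this lower-order step, i.e.\ bridging the \emph{homogeneous}, weighted Kondrat'ev estimates furnished by Section~\ref{sectionlaplaceneumann} to membership in the \emph{non-homogeneous} space $W^{1,p}(J, L^p(G, \R^2)) \cap L^p(J, W^{2,p}(G, \R^2))$. Near the corner the Kondrat'ev weights are harmless (there $\rho$ is bounded), but controlling $u$ and $\partial_t u$ in $L^p(G)$ at infinity is impossible without the constraint $g \in W^{1,p}(J, \widehat{W}^{-1,p}(G))$: for a generic right-hand side the potential $\phi$ carries a contribution that behaves like $\log\rho$ at infinity, so that $\nabla\phi\sim\rho^{-1}\notin L^p(G)$ for $p\le 2$. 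Ruling this out is exactly the hidden mean-value compatibility encoded in $\widehat{W}^{-1,p}(G)$ (the fact that $g(t,\cdot\,)$ annihilates constants, cf.\ Remark~\ref{rem_compat}). Hence the technical heart of the argument is to make the $\widehat{W}^{-1,p}\to\widehat{W}^{1,p}$ solvability of the Neumann--Laplace equation on the wedge precise and compatible with the $K^3_p$-theory of Section~\ref{sectionlaplaceneumann}; once this is available, the potential ansatz $u=\nabla\phi$ delivers the assertion directly.
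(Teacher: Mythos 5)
Your proposal is correct and follows essentially the same route as the paper: the potential ansatz $u = \nabla\phi$ with $\phi$ solving the Neumann--Laplace problem via Corollary~\ref{cor_probl_zeit}, the hypothesis $g \in W^{1,p}(J, \widehat{W}^{-1,p}(G))$ to obtain $\nabla\phi \in W^{1,p}(J, L^p(G,\R^2))$ through weak solvability, and interpolation (Gagliardo--Nirenberg) to recover the first-order spatial derivatives. The only difference is one of emphasis: you spell out the $\widehat{W}^{-1,p}\to\widehat{W}^{1,p}$ weak-solvability step that the paper's proof merely asserts in passing.
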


\begin{proof}
Let $\phi \in L^p(J, K^3_p(G))$ be the unique solution of the problem
\begin{equation*} 
 	\begin{array}{r@{\ =\ }lll}
		\Delta \phi &  g &\text{in} & J \times G, \\[0.25em]
		\partial_\nu \phi  & 0& \text{on} & J \times \Gamma,
	\end{array}
\end{equation*}
which exists according to Corollary \ref{cor_probl_zeit}, since $g \in L^p(J, \widehat{W}^{1,p}(G))$.

By the fact that we also have $g \in  W^{1,p}(J, \widehat{W}^{-1,p}(G))$
it follows that $\phi$ is also a weak solution to the above problem, i.\,e.\ $\nabla \phi \in  W^{1,p}(J, L^p(G, \R^2))$.
Note that we have \linebreak $\partial^\alpha \phi \in L^p(J, L^p(G))$ for $|\alpha| = 3$, since $\phi \in L^p(J, K^3_p(G))$.
Now, let $u := \nabla \phi$.
We then have $u, \partial^\alpha u \in L^p(J, L^p(G, \R^2)) $ for $|\alpha| = 2$.
Interpolation (e.\,g.\ using the Gagliardo-Nirenberg inequality) yields that $\partial^\alpha u \in L^p(J, L^p(G, \R^2))$ also for $|\alpha|=1$.
Summarizing we have $u \in  W^{1,p}(J, L^p(G, \R^2)) \cap L^p(J, W^{2,p}(G, \R^2)).$
Moreover, $\mbox{div}\,u = \Delta \phi = g$ in $J \times G$ and $\mbox{curl}\,u = \mbox{curl}\,\nabla \phi = 0$ on $J \times \Gamma$.
Finally, $u \cdot \nu = \partial_\nu \phi = 0$ on $J \times \Gamma$.
\end{proof} 
 
Now, we are in position to prove the main result of this subsection.
 
\begin{theorem}\label{thm_perfect_slip}
Let $J=(0,T)$ with $0 < T < \infty$ and let $G \subset \R^2$ be the wedge domain defined as in \eqref{def_wedge} with opening angle $\theta_0 \in (0, \pi)$
and let $\Gamma = \partial G \setminus \{\,0\,\}$.
Assume that $p \in (1, \infty) \setminus \{ \frac{2 \theta_0}{3 \theta_0- \pi},\ \frac{2 \theta_0}{3 \theta_0 - 2 \pi},\ \frac{3}{2},\ 2,\ 3\,\}$.
Suppose the data satisfy the regularity condition
\begin{equation*}
(f,\ g,\ h_1,\ h_0,\ u_0) \in \mF
\end{equation*}
and the compatibility conditions
\begin{equation*}
	\begin{array}{rcll}
		\mbox{div}\ u_0 & = & g|_{t=0}, & \quad \textrm{if}\ p > 2, \\[0.5em]
		u_0 \cdot \nu & = & h_0|_{t=0}, & \quad \textrm{if}\ p> \frac{3}{2}, \\[0.5em]
		\mbox{curl}\ u_0 & = & h_1|_{t=0}, & \quad \textrm{if}\ p> 3,
	\end{array}
\end{equation*}
as well as
\begin{equation*}
	F(g, h_0) \in W^{1,p}(J, \widehat{W}^{-1,p}(G))
\end{equation*}
and $\jump{h_1} = 0$ in $J$, if $p > 2$, and
\begin{equation*}
	\langle \partial_{\tau_1} h_0 \rangle_1 + \langle \partial_{\tau_2} h_0 \rangle_2 = \trace{h_1} \quad \textrm{in}\ J,
		\qquad \textrm{if}\ \theta_0 = {\textstyle \frac{\pi}{2}}\ \textrm{and}\ p > 2.
\end{equation*}
Then there exists a unique solution $(u,p) \in \mE$ to problem \eqref{probl_perfect_slip}.
\end{theorem}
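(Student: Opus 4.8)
The plan is to reduce problem \eqref{probl_perfect_slip} by two successive lifts to the case of homogeneous boundary data together with a vanishing divergence constraint, and then to invoke the optimal regularity result \cite[Corollary~1]{Koehne-Saal-Westermann:Stokes-Wedge} for homogeneous perfect slip conditions. The order of the two reductions matters: I would remove the normal and vorticity boundary data \emph{first}, because eliminating the divergence constraint first via Proposition~\ref{prop_div_eq} would require $F(g,0)$ --- rather than the available $F(g,h_0)$ --- to lie in $W^{1,p}(J,\widehat{W}^{-1,p}(G))$. Concretely, I would first apply Corollary~\ref{cor_trace}: the compatibility hypotheses $\jump{h_1}=0$ (for $p>2$) and, when $\theta_0=\frac{\pi}{2}$, $\langle\partial_{\tau_1}h_0\rangle_1+\langle\partial_{\tau_2}h_0\rangle_2=\trace{h_1}$ (for $p>2$) are precisely those needed there, so one obtains $v\in\mE_u$ with $v\cdot\nu=h_0$ and $\mathrm{curl}\,v=h_1$ on $J\times\Gamma$; by the trace theorem for the maximal regularity class $\mE_u$ one also has $v|_{t=0}\in W^{2-2/p}_p(G,\R^2)=\mF_0$.

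Next I would set $\tilde g:=g-\div v\in L^p(J,\widehat{W}^{1,p}(G))$ and check that in fact $\tilde g\in W^{1,p}(J,\widehat{W}^{-1,p}(G))$ as well. Integrating by parts and using $v\cdot\nu=h_0$ gives
\[
	[F(\tilde g,0)](\phi)=-(\tilde g,\phi)_G=[F(g,h_0)](\phi)-(v,\nabla\phi)_G,\qquad \phi\in W^{1,p'}(G),
\]
and since $v\in W^{1,p}(J,L^p(G,\R^2))$ the functional $\phi\mapsto(v,\nabla\phi)_G$ lies in $W^{1,p}(J,\widehat{W}^{-1,p}(G))$ by Corollary~\ref{Homogeneous-Density-Wedge}, while $F(g,h_0)$ does by hypothesis; hence so does $F(\tilde g,0)$, which by Remark~\ref{rem_compat} means $\tilde g\in W^{1,p}(J,\widehat{W}^{-1,p}(G))$. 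Since $p$ avoids the excluded values, Proposition~\ref{prop_div_eq} then provides $w_1\in\mE_u$ with $\div w_1=\tilde g$, $\mathrm{curl}\,w_1=0$ and $w_1\cdot\nu=0$ on $J\times\Gamma$; in particular $\div(v+w_1)=g$, $(v+w_1)\cdot\nu=h_0$ and $\mathrm{curl}(v+w_1)=h_1$.

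It then remains to solve the homogeneous problem for $w:=u-(v+w_1)$. Putting $\tilde f:=f-\partial_t(v+w_1)+\Delta(v+w_1)\in\mF_f$ and $\tilde u_0:=u_0-(v+w_1)|_{t=0}\in\mF_0$, the compatibility conditions assumed on $u_0$, $h_0$, $h_1$ together with the identities just recorded yield $\div\tilde u_0=0$ if $p>2$, $\tilde u_0\cdot\nu=0$ if $p>\frac{3}{2}$ and $\mathrm{curl}\,\tilde u_0=0$ if $p>3$. By \cite[Corollary~1]{Koehne-Saal-Westermann:Stokes-Wedge} there is then a unique $(w_2,q)\in\mE$ with $\partial_t w_2-\Delta w_2+\nabla q=\tilde f$, $\div w_2=0$, $\mathrm{curl}\,w_2=0$, $w_2\cdot\nu=0$, $w_2(0)=\tilde u_0$, and unwinding the construction shows that $(u,p):=(v+w_1+w_2,\,q)\in\mE$ solves \eqref{probl_perfect_slip}. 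For uniqueness I would simply note that the difference of two solutions in $\mE$ solves the fully homogeneous perfect slip problem, hence vanishes --- up to an additive constant in the pressure --- by the same reference.

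The hard part will not be any single estimate --- the analytic substance sits in Corollary~\ref{cor_trace}, Proposition~\ref{prop_div_eq} and \cite[Corollary~1]{Koehne-Saal-Westermann:Stokes-Wedge} --- but rather the two bookkeeping points above: verifying the negative-order regularity $\tilde g\in W^{1,p}(J,\widehat{W}^{-1,p}(G))$ of the reduced divergence datum, which is precisely where the somewhat hidden compatibility condition $F(g,h_0)\in W^{1,p}(J,\widehat{W}^{-1,p}(G))$ gets consumed, and making sure the two lifts are performed in the right order so that no boundary condition is reintroduced.
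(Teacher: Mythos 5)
Your proposal is correct and follows essentially the same route as the paper: lift the boundary data via Corollary~\ref{cor_trace}, correct the divergence via Proposition~\ref{prop_div_eq} (with the reduced datum shown to lie in $W^{1,p}(J,\widehat{W}^{-1,p}(G))$ exactly as you argue, cf.\ Remark~\ref{rem_compat}), and solve the remaining homogeneous problem with \cite[Corollary~1]{Koehne-Saal-Westermann:Stokes-Wedge}, which also gives uniqueness. Your explicit verification of $F(\tilde g,0)\in W^{1,p}(J,\widehat{W}^{-1,p}(G))$ spells out a point the paper only indicates briefly, but the decomposition and the order of the lifts coincide with the paper's proof.
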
 

\begin{proof}
The uniqueness of the solution $(u,p) \in \mE$ follows directly from \cite[Corollary~1]{Koehne-Saal-Westermann:Stokes-Wedge}.

To show the existence of the solution to \eqref{probl_perfect_slip} we proceed in three steps:
First, we employ Corollary~\ref{cor_trace} and choose $u_1 \in \mE_u$ such that
\begin{equation*}
	 	\begin{array}{r@{\ =\ }lll}
	 		\text{curl} \ u_1 &  h_1 & \quad \text{on} & J \times \Gamma, \\[0.25em]
	 		u_1 \cdot \nu &h_0 & \quad \text{on} & J \times \Gamma.
	 	\end{array}
\end{equation*}
Next, we employ Proposition~\ref{prop_div_eq} and choose $u_2 \in \mE_u$ such that
\begin{equation*}
	 	\begin{array}{r@{\ =\ }lll}
	 		 \div u_2 &  g - \div u_1 & \quad \text{in} & J \times G, \\[0.25em]
	 		 \text{curl} \ u_2 =0, \ u_2 \cdot \nu  & 0& \quad \text{on} & J \times \Gamma.
	 	\end{array}
\end{equation*}
Note that the compatibility conditions and the fact that $u_1 \cdot \nu = h_0$ on $J \times \Gamma$ ensure that
$g - \text{div}\,u_1 \in W^{1,p}(J, \widehat{W}^{-1,p}(G)) \cap L^p(J, W^{1,p}(G))$; cf.~Remark~\ref{rem_compat}.
Finally, we employ \mbox{\cite[Corollary~1]{Koehne-Saal-Westermann:Stokes-Wedge}} and choose $(u_3, p_3) \in \mE$ such that
\begin{equation*}
	 	\begin{array}{r@{\ =\ }lll}
	 		 \partial_t u_3 - \Delta u_3 + \nabla p &  f - \partial_t u_1 + \Delta u_1 - \partial_t u_2 + \Delta u_2 &
			\quad \text{in} & J \times G, \\[0.25em]
			\div u_3 & 0& \quad \text{in} & J \times G, \\[0.25em]
		    \text{curl} \ u_3 =0, \ u_3 \cdot \nu &0 &  \quad  \text{on} & J \times  \Gamma ,\\[0.25em]
			u_3(0) & u_0- u_1(0) - u_2(0)& \quad \text{in} & G.
	 	\end{array}
\end{equation*}
By construction $(u, p) := (u_1 + u_2 + u_3, p) \in \mE$ is a solution to \eqref{probl_perfect_slip}.
\end{proof}

\subsection{Inhomogeneous Free and Perfect Slip Boundary Conditions}

Let $\mE$ and $\mF$ be defined as in \eqref{solution_class} and \eqref{data_class}, respectively.
Here we consider the system \eqref{probu2perfectslip} and show that it is uniquely solvable within the maximal regularity class $\mE$.
Recall that the boundary of $G$ is decomposed as in \eqref{def_boundary} as $\partial G = \Gamma \cup \{\,0\,\}$
with its smooth part given as $\Gamma = \Gamma_1 \cup \Gamma_2$.
Also recall that $(\tau,\,\nu) = (\tau_j,\,\nu_j)$ for $j = 1,\,2$ denotes the positively oriented pair of unit tangential and unit outer normal vector on $\Gamma_j$ as introduced in Section~\ref{sec_intro}.

For the boundary conditions in problem \eqref{probu2perfectslip} we observe that
\begin{align*}
\tau^T D_\pm(u)\nu  &= \frac{1}{2} \begin{pmatrix}
\partial_{x_1} u_1 \pm  \partial_{x_1} u_1 &  \partial_{x_1} u_2 \pm  \partial_{x_2} u_1 \\ \partial_{x_2} u_1 \pm  \partial_{x_1} u_2   & \partial_{x_2} u_2 \pm  \partial_{x_2} u_2
\end{pmatrix} \nu \cdot \tau \\[0.5em]
& = \frac{1}{2} \begin{pmatrix}
\partial_{x_1} (u \cdot \nu) \\ \partial_{x_2} (u \cdot \nu)
\end{pmatrix} \cdot \tau \pm \frac{1}{2} \begin{pmatrix}
\partial_\nu u_1 \\ \partial_\nu u_2
\end{pmatrix} \cdot \tau \\[0.5em]
& = {\textstyle \frac{1}{2}} \partial_\tau (u \cdot \nu) \pm {\textstyle \frac{1}{2}} \partial_\nu (u \cdot \tau) \qquad \textrm{on}\ J \times \Gamma,
\end{align*}
which implies that
\begin{equation*}
	\begin{array}{rcl}
		\tau^T D_+(u)\nu & = & {\textstyle \frac{1}{2}} \partial_\tau (u \cdot \nu) + {\textstyle \frac{1}{2}} \partial_\nu (u \cdot \tau) \\[0.25em]
			& = & \partial_\tau (u \cdot \nu) - {\textstyle \frac{1}{2}} \partial_\tau (u \cdot \nu) + {\textstyle \frac{1}{2}} \partial_\nu (u \cdot \tau) = \partial_\tau (u \cdot \nu) - {\textstyle \frac{1}{2}} \textrm{curl}\,u \qquad \textrm{on}\ J \times \Gamma
	\end{array}
\end{equation*}
as well as
\begin{equation*}
	\tau^T D_-(u)\nu = {\textstyle \frac{1}{2}} \partial_\tau (u \cdot \nu) - {\textstyle \frac{1}{2}} \partial_\nu (u \cdot \tau)
		= {\textstyle \frac{1}{2}} \textrm{curl}\,u \qquad \textrm{on}\ J \times \Gamma.
\end{equation*}
Therefore, if the tangential boundary condition in \eqref{probu2perfectslip} is posed based on $D_+$,
then \eqref{probu2perfectslip} is equivalent to
\begin{equation}\label{probu2curlformmain}
	 	\begin{array}{r@{\ =\ }lll}
	 		 \partial_t u - \Delta u + \nabla p & f &
			\quad \text{in} & J \times G, \\[0.25em]
			\div u & g & \quad \text{in } & J \times G, \\[0.25em]
	 		u \cdot \nu & h_0 & \quad \text{on} & J \times \Gamma,\\[0.25em]
			u(0) & u_0& \quad \text{in} & G
	 	\end{array}
\end{equation}
together with the boundary condition
\begin{equation}\label{probu2curlformplus}
	\textrm{curl}\,u = 2 (\partial_\tau h_0 + h_1) \quad \text{on}\ J \times \Gamma.
\end{equation}
Analogously, if the tangential boundary condition in problem \eqref{probu2perfectslip} is posed based on $D_-$,
then \eqref{probu2perfectslip} is equivalent to \eqref{probu2curlformmain}
together with the boundary condition
\begin{equation}\label{probu2curlformminus}
	\textrm{curl}\,u = - 2 h_1 \quad \text{on}\ J \times \Gamma.
\end{equation}
Both systems (\ref{probu2curlformmain}, \ref{probu2curlformplus}) and (\ref{probu2curlformmain}, \ref{probu2curlformminus})
are uniquely solvable using Theorem~\ref{thm_perfect_slip} and, hence, we obtain the following result.
\begin{corollary}\label{cor_free_perfect_slip}
Let $J=(0,T)$ with $0 < T < \infty$ and let $G \subset \R^2$ be the wedge domain defined as in \eqref{def_wedge} with opening angle $\theta_0 \in (0, \pi)$
and let $\Gamma = \partial G \setminus \{\,0\,\}$.
Assume that $p \in (1, \infty) \setminus \{ \frac{2 \theta_0}{3 \theta_0- \pi},\ \frac{2 \theta_0}{3 \theta_0 - 2 \pi},\ \frac{3}{2},\ 2,\ 3\,\}$.
Suppose the data satisfy the regularity condition
\begin{equation*}
(f,\ g,\ h_1,\ h_0,\ u_0) \in \mF
\end{equation*}
and the compatibility conditions
\begin{equation*}
	\begin{array}{rcll}
		\mbox{div}\ u_0 & = & g|_{t=0}, & \quad \textrm{if}\ p > 2, \\[0.5em]
		u_0 \cdot \nu & = & h_0|_{t=0}, & \quad \textrm{if}\ p> \frac{3}{2}, \\[0.5em]
		- \tau^T D_\pm(u_0) \nu & = & h_1|_{t=0}, & \quad \textrm{if}\ p> 3,
	\end{array}
\end{equation*}
as well as
\begin{equation*}
	F(g, h_0) \in W^{1,p}(J, \widehat{W}^{-1,p}(G)).
\end{equation*}
If the boundary condition is posed based on $D_+$,
then assume the compatibility conditions $\jump{\partial_\tau h_0 + h_1} = 0$ in $J$, if $p > 2$, and
\begin{equation*}
	{\textstyle \frac{1}{2}} \langle \partial_{\tau_1} h_0 \rangle_1 + {\textstyle \frac{1}{2}} \langle \partial_{\tau_2} h_0 \rangle_2 = \trace{\partial_\tau h_0 + h_1} \quad \textrm{in}\ J,
		\qquad \textrm{if}\ \theta_0 = {\textstyle \frac{\pi}{2}}\ \textrm{and}\ p > 2.
\end{equation*}
If the boundary condition is posed based on $D_-$,
then assume the compatibility conditions $\jump{h_1} = 0$ in $J$, if $p > 2$, and
\begin{equation*}
	- {\textstyle \frac{1}{2}} \langle \partial_{\tau_1} h_0 \rangle_1 - {\textstyle \frac{1}{2}} \langle \partial_{\tau_2} h_0 \rangle_2 = \trace{h_1} \quad \textrm{in}\ J,
		\qquad \textrm{if}\ \theta_0 = {\textstyle \frac{\pi}{2}}\ \textrm{and}\ p > 2.
\end{equation*}
Then there exists a unique solution $(u,p) \in \mE$ to problem \eqref{probu2perfectslip}.
\end{corollary}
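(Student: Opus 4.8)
The plan is to reduce Corollary~\ref{cor_free_perfect_slip} to Theorem~\ref{thm_perfect_slip} by means of the algebraic boundary identities derived just above its statement. First I would record that for every $u \in \mE_u$ one has, on $J \times \Gamma$,
\[
	\tau^T D_+(u)\nu = \partial_\tau(u\cdot\nu) - {\textstyle\frac12}\,\mathrm{curl}\,u, \qquad \tau^T D_-(u)\nu = {\textstyle\frac12}\,\mathrm{curl}\,u,
\]
so that once the boundary condition $u \cdot \nu = h_0$ is imposed, the tangential condition $-\tau^T D_\pm(u)\nu = h_1$ in \eqref{probu2perfectslip} becomes $\mathrm{curl}\,u = \tilde h_1$ on $J \times \Gamma$, where $\tilde h_1 := 2(\partial_\tau h_0 + h_1)$ in the $D_+$ case and $\tilde h_1 := -2 h_1$ in the $D_-$ case. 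Hence $(u,p) \in \mE$ solves \eqref{probu2perfectslip} if and only if it solves problem \eqref{probl_perfect_slip} with data $(f, g, \tilde h_1, h_0, u_0)$; since this is a genuine equivalence of problems (the two identities hold for all $u \in \mE_u$), uniqueness will follow immediately from the uniqueness part of Theorem~\ref{thm_perfect_slip}, and existence will follow once the hypotheses of that theorem are checked for the modified data.

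The next step is to verify $\tilde h_1 \in \mF_\tau$. This is trivial in the $D_-$ case. In the $D_+$ case it reduces to $\partial_\tau h_0|_{\Gamma_j} \in \mF^{(j)}_\tau$ for $j = 1,2$, which I would obtain from $h_0|_{\Gamma_j} \in \mF^{(j)}_\nu$ by the mixed-derivative theorem for anisotropic Sobolev--Slobodeckij spaces: in the parabolic scale underlying $\mF^{(j)}_\nu$ a tangential derivative lowers the spatial order by one and the temporal order by $\tfrac12$, so that $W^{1-1/2p}_p(J,L^p(\Gamma_j)) \cap L^p(J,W^{2-1/p}_p(\Gamma_j))$ is mapped by $\partial_\tau$ into $W^{1/2-1/2p}_p(J,L^p(\Gamma_j)) \cap L^p(J,W^{1-1/p}_p(\Gamma_j)) = \mF^{(j)}_\tau$. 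The remaining regularity requirements on $f$, $g$, $h_0$, $u_0$, as well as $F(g,h_0) \in W^{1,p}(J,\widehat{W}^{-1,p}(G))$, are inherited verbatim.

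Then I would translate the compatibility conditions. Those on $\mathrm{div}\,u_0$ (for $p>2$) and on $u_0\cdot\nu$ (for $p>\tfrac32$) are unchanged. For $p>3$ one uses $u_0\cdot\nu = h_0|_{t=0}$ (in force since $p>3>\tfrac32$) to rewrite $-\tau^T D_\pm(u_0)\nu = h_1|_{t=0}$ via the identities above as $\mathrm{curl}\,u_0 = \tilde h_1|_{t=0}$, which is exactly the third compatibility condition of Theorem~\ref{thm_perfect_slip}. For $p>2$ the hypotheses $\jump{\partial_\tau h_0 + h_1}=0$ (resp.\ $\jump{h_1}=0$) are precisely $\jump{\tilde h_1}=0$, and when $\theta_0 = \tfrac\pi2$ multiplying the stated corner condition by $2$ (resp.\ by $-2$) gives $\langle\partial_{\tau_1}h_0\rangle_1 + \langle\partial_{\tau_2}h_0\rangle_2 = \trace{\tilde h_1}$. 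At that point Theorem~\ref{thm_perfect_slip} applies and delivers the unique $(u,p) \in \mE$, which by the equivalence above is the desired solution of \eqref{probu2perfectslip}.

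I do not expect a serious obstacle: the argument is just Theorem~\ref{thm_perfect_slip} combined with a purely algebraic boundary reformulation. The only point requiring genuine care is the bookkeeping above --- chiefly checking that $\partial_\tau h_0$ indeed lands in $\mF_\tau$ and that the jump and corner-trace conditions transform correctly under $h_1 \mapsto \tilde h_1$ with the right numerical factors. The exclusion of $p \in \{\tfrac32,2,3\}$ and of the two angle-dependent exponents is simply inherited from Theorem~\ref{thm_perfect_slip} and the results it rests on, and need not be revisited.
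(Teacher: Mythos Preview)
Your proposal is correct and follows exactly the approach of the paper: the text immediately preceding the corollary derives the identities $\tau^T D_+(u)\nu = \partial_\tau(u\cdot\nu) - \tfrac12\,\mathrm{curl}\,u$ and $\tau^T D_-(u)\nu = \tfrac12\,\mathrm{curl}\,u$, rewrites \eqref{probu2perfectslip} as \eqref{probu2curlformmain} with the curl condition \eqref{probu2curlformplus} or \eqref{probu2curlformminus}, and then simply invokes Theorem~\ref{thm_perfect_slip}. Your write-up just spells out in more detail the regularity of $\partial_\tau h_0$ and the translation of the compatibility conditions, which the paper leaves implicit.
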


\subsection{Proof of Theorem \ref{thm_navierslip}}

A unique solution to \eqref{problem_navierslipbc} can now be obtained with the aid of the usual perturbation argument.
To this end, we denote by $L: \mE \longrightarrow \mF$ the linear operator induced by the left-hand side of problem \eqref{probu2perfectslip}.
Now, if $(f, g, h_1, h_0, u_0) \in \mF$ satisfy all compatibility conditions stated in Theorem~\ref{thm_navierslip},
then \eqref{problem_navierslipbc} is equivalent to
\begin{equation*}
	L(u, p) = (f, g, h_1 - \alpha u \cdot \tau, h_0 , u_0).
\end{equation*}
Now, we choose $\widetilde{h}_1 \in \mF_\tau$ such that $\trace{\widetilde{h}_1} = 0$, if $p > 2$
and $\widetilde{h}_1|_{t = 0} = \alpha u_0|_\Gamma \cdot \tau$.
This is possible, since $\trace{\alpha u_0 \cdot \tau} = 0$, if $p > 2$, due to the requirement $\trace{\alpha} = 0$.
By construction, the data $(f, g, h_1 - \widetilde{h}_1, h_0, u_0) \in \mF$ satisfy all compatibility conditions stated in Corollary~\ref{cor_free_perfect_slip}.

Hence, Corollary~\ref{cor_free_perfect_slip} shows that there exists a unique solution $(u_\ast, p_\ast)$
to the problem $L(u_\ast, p_\ast) = (f, g, h_1 - \widetilde{h}_1, h_0, u_0)$.
Thus, the ansatz $(u, p) = (u_\ast, p_\ast) + (v, q)$ leads to the problem
\begin{equation*}
	\mL(v, q) = \widetilde{h}_1 - \alpha u_\ast|_\Gamma \cdot \tau - \alpha v|_\Gamma \cdot \tau, \qquad \qquad (v, q) \in {}_0 \mE,
\end{equation*}
where the linear operator $\mL: {}_0 \mE \longrightarrow {}_0 \mF_\tau$ between the spaces
\begin{equation*}
	{}_0 \mE := \left\{\,(w, r) \in \mE\,: \begin{array}{c} \partial_t w - \Delta w + \nabla r = 0\ \textrm{in}\ J \times G,\ \textrm{div}\,w = 0\ \textrm{in}\ J \times G \\[0.5em] w \cdot \nu = 0\ \textrm{on}\ J \times \Gamma,\ w|_{t = 0} = 0\ \textrm{in}\ G \end{array} \right\}
\end{equation*}
and
\begin{equation*}
	{}_0 \mF_\tau := \Big\{\,h \in \mF_\tau\,:\,\trace{h} = 0,\ \textrm{if}\ p > 2,\ h|_{t = 0} = 0,\ \textrm{if}\ p > 3\,\Big\}
\end{equation*}
is given as $\mL(w, r) := (\textrm{curl}\,w)|_\Gamma$ for $(w, r) \in {}_0 \mE$.

Thanks to the homogeneous initial conditions
the operator $\mL$ is a linear isomorphism by Corollary~\ref{cor_free_perfect_slip},
where the operator norm of $\mL^{-1}$ does not depend on the length $T > 0$ of the time interval $J = (0,T)$ under consideration.
Moreover, we have
\begin{equation*}
	\trace{\widetilde{h}_1 - \alpha u_\ast \cdot \tau} = \trace{\alpha v \cdot \tau} = 0, \quad \textrm{if}\ p > 2,
\end{equation*}
since $\trace{\alpha} = \trace{\widetilde{h}_1} = 0$, as well as
\begin{equation*}
	(\widetilde{h}_1 - \alpha u_\ast|_\Gamma \cdot \tau)|_{t = 0}
		= \widetilde{h}_1|_{t = 0} - \alpha u_0|_\Gamma \cdot \tau = 0, \quad \textrm{if}\ p > 3,
\end{equation*}
which shows that $\widetilde{h}_1 - \alpha u_\ast|_\Gamma \cdot \tau \in {}_0 \mF_\tau$.
Clearly, we also have $\alpha v|_\Gamma \cdot \tau \in {}_0 \mF_\tau$ for all $(v, q) \in {}_0 \mE$
and we are left with the task to solve the problem
\begin{equation*}
	(1 - \mL^{-1} R)(v, q) = \mL^{-1}(\widetilde{h}_1 - \alpha u_\ast|_\Gamma \cdot \tau), \qquad \qquad (v, q) \in {}_0 \mE,
\end{equation*}
where the linear operator $R: {}_0 \mE \longrightarrow {}_0 \mF_\tau$ is given as
$R(v, q) := - \alpha v|_\Gamma \cdot \tau$.
However, this operator is of lower order and the usual estimates employed within perturbation arguments for parabolic problems
show that $1 - \mL^{-1} R$ is invertible by a Neumann series argument, at least for small values $T < T^\ast$.
Here $T^\ast > 0$ is independent of the data.
Consequently, problem \eqref{problem_navierslipbc} may be solved this way successively on small time intervals,
which cover any given time interval $J = (0, T)$ after finitely many steps.
This completes the proof of Theorem~\ref{thm_navierslip}.
\qed
\appendix 

\section{Hardy's inequality on the 2D wedge domain}
\label{sec_hardy}
The famous Hardy's inequality is well known and many proofs exist within the literature.
However, a proper formulation for the wedge requires boundary conditions at the corner point $x = 0$
or at infinity, if one wants to have a version of Hardy's inequality at hand,
that is not only valid for equivalence classes of functions that differ by additive constants.
A version in the latter sense is easily deduced as a consequence of, for instance, \cite[Corollary~VIII.1.5.3]{Amann:Parabolic-Problems-2}.

However, we note that $L^p_\gamma(0, \infty) \hookrightarrow L^1_{\textrm{\upshape loc}}(0, \infty)$ for $1 < p < \infty$ by H{\"o}lder's inequality,
provided that $\gamma \in \R$ with $\gamma < p - 1$.
Hence, if $u \in L^p_{\textrm{\upshape loc}}(0, \infty)$ and $u^\prime \in L_\gamma^p(0, \infty)$,
then $u \in W^{1, 1}_{\textrm{\upshape loc}}(0, \infty)$, if $\gamma < p - 1$,
which shows that the trace $u(0)$ is well-defined in this case.
Analogously, the value $u(\infty) = \lim_{x \rightarrow \infty} u(x)$ is well-defined, if $\gamma > p - 1$.
Indeed, if $u \in L^p_{\textrm{\upshape loc}}(0, \infty)$ and $u^\prime \in L_\gamma^p(0, \infty)$ with $\gamma > p - 1$,
then for $v: (0,\,\infty) \longrightarrow \C$ given as $v(x) := u(\frac{1}{x})$ for $x > 0$ we obtain $v \in L^p_{\textrm{\upshape loc}}(0, \infty)$
as well as
\begin{equation*}
	\|v^\prime\|^p_{L^p_\gamma(0, a)}
		= \int^a_0 |v^\prime(x)|^p x^\gamma d(x)
		= \int^\infty_{1 / a} |u^\prime(y)|^p y^{2 (p - 1) - \gamma} d(y)
		\leq \|u^\prime\|^p_{L^p_\gamma(0, \infty)}
\end{equation*}
for $0 < a < 1$, where we used that $2 (p - 1) - \gamma < \gamma$,
i.\,e.\ $v \in W^{1, 1}_{\textrm{\upshape loc}}(0, a)$
and $u(\infty) := v(0)$ is well-defined.
Now, with the same proof as given in \cite{Amann:Parabolic-Problems-2} we obtain the following version of \cite[Corollary~VIII.1.5.3]{Amann:Parabolic-Problems-2},
which is Hardy's inequality on the halfline $(0, \infty)$.

\begin{corollary}\label{lem_hardy_amann}
Suppose $1<p< \infty$, $\gamma \in \R$ and $\gamma \neq p-1$.
Let $u \in L^p_{\textrm{\upshape loc}}(0, \infty)$ with $u' \in L_\gamma^p(0, \infty)$
such that $u(0) = 0$, if $\gamma < p - 1$, and $u(\infty) = 0$, if $\gamma > p - 1$, respectively.
Then we have
\begin{equation*}
	\left\| \frac{u}{x} \right\|_{L_{\gamma}^p(0, \infty)} \leq C(p, \gamma)\,\| u' \|_{L_\gamma^p((0, \infty))}
\end{equation*}
with a constant $C(p, \gamma) > 0$ that is independent of $u$.
\end{corollary}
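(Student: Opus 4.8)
The plan is to reproduce the classical one-dimensional Hardy argument in weighted form, treating the cases $\gamma < p-1$ and $\gamma > p-1$ separately and reducing the second to the first by the inversion $x \mapsto 1/x$ already prepared in the discussion preceding the statement; this is precisely the proof of \cite[Corollary~VIII.1.5.3]{Amann:Parabolic-Problems-2} rewritten with the weight $x^\gamma$.

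\emph{Case $\gamma < p - 1$.} First I would observe that H\"older's inequality gives $\int_0^1 |u'(s)|\,ds \le \big(\int_0^1 |u'(s)|^p s^\gamma\,ds\big)^{1/p}\big(\int_0^1 s^{-\gamma/(p-1)}\,ds\big)^{(p-1)/p} < \infty$, the last factor being finite precisely because $\gamma/(p-1) < 1$; hence $u' \in L^1_{\mathrm{loc}}([0,\infty))$ and, together with $u(0) = 0$, the fundamental theorem of calculus yields $u(x) = \int_0^x u'(s)\,ds$, so that $|u(x)| \le \int_0^x |u'(s)|\,ds$ for every $x > 0$. Writing $\frac1x\int_0^x |u'(s)|\,ds = \int_0^1 |u'(xt)|\,dt$ and invoking Minkowski's integral inequality in $L^p_\gamma(0,\infty)$, I obtain
\begin{equation*}
	\left\| \frac{u}{x} \right\|_{L^p_\gamma(0,\infty)}
		\le \left\| \frac1x \int_0^x |u'(s)|\,ds \right\|_{L^p_\gamma(0,\infty)}
		\le \int_0^1 \left( \int_0^\infty |u'(xt)|^p x^\gamma\,dx \right)^{1/p} dt
		= \left( \int_0^1 t^{-(\gamma+1)/p}\,dt \right) \|u'\|_{L^p_\gamma(0,\infty)},
\end{equation*}
where the last equality uses the substitution $y = xt$. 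Since $\gamma < p-1$ the remaining integral converges and equals $\tfrac{p}{p-1-\gamma}$, which proves the assertion with $C(p,\gamma) = \tfrac{p}{p-1-\gamma}$.

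\emph{Case $\gamma > p - 1$.} Here I would set $v(x) := u(1/x)$ and use the computations in the paragraph before the corollary: $v \in L^p_{\mathrm{loc}}(0,\infty)$, $v(0) = u(\infty) = 0$, $v'(x) = -x^{-2}u'(1/x)$, and for $\gamma' := 2(p-1) - \gamma$ the substitution $y = 1/x$ gives both $\|v'\|_{L^p_{\gamma'}(0,\infty)} = \|u'\|_{L^p_\gamma(0,\infty)}$ and $\|u/x\|_{L^p_\gamma(0,\infty)} = \|v/x\|_{L^p_{\gamma'}(0,\infty)}$. As $\gamma > p-1$ we have $\gamma' < p-1$, so the first case applies to $v$; translating back yields the inequality for $u$ with $C(p,\gamma) = \tfrac{p}{\gamma-(p-1)}$, so that in both cases $C(p,\gamma) = p/|p-1-\gamma|$.

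I do not expect a genuine obstacle. The two points needing care are exactly the two where the hypothesis $\gamma \neq p-1$ enters: the absolute continuity of $u$ up to $0$ (respectively of $v$ up to $0$), which rests on the local integrability forced by $\gamma/(p-1) < 1$, and the bookkeeping of exponents under the map $x \mapsto 1/x$ — and both have already been set up in the remarks preceding the statement.
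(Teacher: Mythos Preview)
Your argument is correct and coincides with the paper's approach: the paper does not spell out a proof but states that the corollary follows ``with the same proof as given in \cite{Amann:Parabolic-Problems-2},'' and your write-up is exactly that classical Hardy argument (integral representation plus Minkowski for $\gamma<p-1$, then the inversion $x\mapsto 1/x$ for $\gamma>p-1$), with the constants $C(p,\gamma)=p/|p-1-\gamma|$ coming out right.
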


In the following let $\psi:= \psi_p  \circ \psi_E : \Omega \rightarrow G$ be the transformation from the wedge onto the layer domain defined at the beginning of Section 3.
As consequences of Corollary~\ref{lem_hardy_amann} we obtain:

\begin{lemma}\label{lemma_hardy}
Let $1<p< \infty$, $\gamma \in \R$ such that $\gamma \neq p-2$ and $\rho:= |(x_1, x_2)|$.
Let $u \in L_{\textrm{\upshape loc}}^p(G)$ with $\nabla u \in L^p_\gamma(G)$
such that $u(0) = 0$, if $\gamma < p - 2$, and $u(\infty) = 0$, if $\gamma > p - 2$, respectively.
Then we have
\begin{equation*}
\| \rho^{-1} u \|_{L_\gamma^p(G)} \leq C(p, \gamma)\,\| \nabla u \|_{L_\gamma^p(G)}
\end{equation*}
with a constant $C(p, \gamma) > 0$ that is independent of $u$.
\end{lemma}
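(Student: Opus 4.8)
The plan is to reduce the statement to the one-dimensional Hardy inequality of Corollary~\ref{lem_hardy_amann} by passing to polar coordinates $(r,\theta) \in (0,\infty) \times (0,\theta_0)$. In these coordinates we have $\rho = r$, the volume element is $r\,dr\,d\theta$, and
\[
	|\nabla u|^2 = |\partial_r u|^2 + r^{-2}|\partial_\theta u|^2 \geq |\partial_r u|^2,
\]
so that, writing $\tilde\gamma := \gamma + 1$ for the shifted exponent induced by the factor $r$ in the volume element,
\[
	\|\nabla u\|_{L^p_\gamma(G)}^p \geq \int_0^{\theta_0}\!\! \int_0^\infty |\partial_r u(r,\theta)|^p\,r^{\tilde\gamma}\,dr\,d\theta
	\qquad \text{and} \qquad
	\|\rho^{-1} u\|_{L^p_\gamma(G)}^p = \int_0^{\theta_0}\!\! \int_0^\infty \Big|\tfrac{u(r,\theta)}{r}\Big|^p\,r^{\tilde\gamma}\,dr\,d\theta.
\]
Since $\gamma \neq p - 2$ is equivalent to $\tilde\gamma \neq p - 1$, the halfline Hardy inequality of Corollary~\ref{lem_hardy_amann} is available with weight exponent $\tilde\gamma$, and $\gamma < p-2$ (resp.\ $\gamma > p-2$) corresponds exactly to the case $\tilde\gamma < p-1$ (resp.\ $\tilde\gamma > p-1$) there.

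Next I would invoke Fubini's theorem: from $\nabla u \in L^p_\gamma(G)$ it follows that for almost every $\theta \in (0,\theta_0)$ the radial slice $v_\theta := u(\,\cdot\,,\theta)$ satisfies $v_\theta' \in L^p_{\tilde\gamma}(0,\infty)$ and $v_\theta \in L^p_{\mathrm{loc}}(0,\infty)$; in particular, by the Hölder argument recorded just before Corollary~\ref{lem_hardy_amann}, $v_\theta$ admits a locally absolutely continuous representative, so that $v_\theta(0)$ is well-defined when $\tilde\gamma < p-1$ and $v_\theta(\infty)$ when $\tilde\gamma > p-1$. The crux of the argument is to transfer the hypothesis $u(0) = 0$ (resp.\ $u(\infty) = 0$) on $G$ to these slices, i.e.\ to show that $v_\theta(0) = 0$ for a.e.\ $\theta$ if $\gamma < p-2$, resp.\ $v_\theta(\infty) = 0$ for a.e.\ $\theta$ if $\gamma > p-2$. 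This is the delicate step, since the corner trace is a single scalar value and one must see that the radial limits vanish along almost every ray; I expect this to be handled either by first establishing the inequality for $u \in C_c^\infty(\overline{G})$ with the appropriate support condition and then passing to the limit (density of such functions being available through the homogeneous approximation results), or by a direct measure-theoretic argument exploiting that $\int_0^1 |\partial_r u(r,\theta)|\,dr < \infty$ for a.e.\ $\theta$ together with the vanishing of the corner trace.

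Granting this, for a.e.\ $\theta$ Corollary~\ref{lem_hardy_amann} applied to $v_\theta$ with exponent $\tilde\gamma$ gives
\[
	\int_0^\infty \Big|\tfrac{u(r,\theta)}{r}\Big|^p\,r^{\tilde\gamma}\,dr
		\leq C(p,\tilde\gamma)^p \int_0^\infty |\partial_r u(r,\theta)|^p\,r^{\tilde\gamma}\,dr,
\]
with a constant depending only on $p$ and $\tilde\gamma = \gamma+1$. Integrating over $\theta \in (0,\theta_0)$ and combining with the two displays from the first step yields
\[
	\|\rho^{-1} u\|_{L^p_\gamma(G)}^p \leq C(p,\gamma+1)^p \int_0^{\theta_0}\!\! \int_0^\infty |\partial_r u(r,\theta)|^p\,r^{\gamma+1}\,dr\,d\theta \leq C(p,\gamma+1)^p\,\|\nabla u\|_{L^p_\gamma(G)}^p,
\]
which is the asserted estimate with $C(p,\gamma) := C(p,\gamma+1)$. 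Apart from the slice-wise transfer of the vanishing condition at the corner or at infinity, the proof consists only of polar coordinates, Fubini, and the pointwise bound $|\partial_r u| \leq |\nabla u|$.
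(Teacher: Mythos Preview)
Your approach is essentially the same as the paper's: both reduce to the one-dimensional Hardy inequality on $(0,\infty)$ with the shifted weight $\tilde\gamma=\gamma+1$ by passing to polar coordinates and using $|\partial_r u|\le|\nabla u|$. The only cosmetic difference is that the paper routes the computation through the Euler transformation $r=e^x$ (i.e.\ through the diffeomorphism $\psi:\Omega\to G$) before applying Corollary~\ref{lem_hardy_amann}, whereas you stay in the $(r,\theta)$ variables; the substitution $r=e^x$ is inessential and your version is slightly more direct.

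On the point you flag as delicate---transferring the corner/infinity vanishing condition to almost every radial slice---the paper does not spell this out either; it simply writes ``Then the above calculation implies'' and applies the one-dimensional estimate slice-wise. So your treatment is at least as careful as the paper's on this issue, and your suggested justification (via density or via the absolute continuity of $v_\theta$ together with the corner trace) is the right way to close the gap that the paper leaves implicit.
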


\begin{proof}
Let $\tilde{\gamma} \in \R$ such that $\tilde{\gamma} \neq p-1$.
Let $v \in L^p_{\textrm{\upshape loc}}(0, \infty)$ with $ v' \in L^p_{\tilde{\gamma}}(0, \infty)$
such that $v(0) = 0$, if $\tilde{\gamma} < p - 1$, and $v(\infty) = 0$, if $\tilde{\gamma} > p - 1$, respectively.
Then by Lemma~\ref{lem_hardy_amann} we have that
\begin{align}\label{ungl_hardy}
\int_\R e^{(\tilde{\gamma} -p)x} |v (e^x)|^p e^x dx &= \int_{0}^\infty \left| \frac{v(y)}{y} \right|^p y^{\tilde{\gamma}} dy  \leq C(p, \tilde{\gamma}) \int_{0}^\infty | v'(y)|^p y^{ \tilde{\gamma}} dy \notag \\[0.5em]
& =C(p, \tilde{\gamma})\int_\R e^{\tilde{\gamma}x} |v'(e^x)|^p e^x dx.
\end{align}

Now, let $ u \in L^p_{\textrm{\upshape loc}}(G) $ with $\nabla u \in L_\gamma^p(G)$
such that $u(0) = 0$, if $\gamma < p - 2$, and $u(\infty) = 0$, if $\gamma > p - 2$, respectively, and set $\tilde{\gamma} := \gamma + 1$.
Then the above calculation implies
\begin{samepage}
\begin{align*}
\| \rho^{-1} u \|^p_{L^p_\gamma(G)} &= \int_{G} \left|\frac{u(x_1, x_2)}{\rho(x_1, x_2)} \right|^p \rho^{\gamma} d(x_1, x_2) = \int_0^{\theta_0} \int_\R e^{(\gamma -p) x} | u(\psi (x, \theta))|^p e^{2x} dx  d \theta \\[0.5em]
&= \int_0^{\theta_0} \int_\R e^{(\gamma-p + 1) x} |u(\psi (x, \theta))|^p e^x dx d\theta \\[0.5em]
& \leq  C(p, \gamma + 1) \int_0^{\theta_0} \int_\R e^{(\gamma + 1 )x} | \nabla u(\psi(x, \theta)) |^p e^x dx d\theta\\[0.5em]
& =C(p, \gamma +1 ) \int_{0}^{\theta_0} \int_\R e^{\gamma  x} | \nabla u(\psi (x, \theta))|^p e^{2x} dx d\theta \\[0.5em]
& = C(p, \gamma + 1 ) \int_G \rho^{\gamma} | \nabla u(x_1, x_2) |^p d(x_1, x_2) \\[1.0em]
&= C(p, \gamma + 1 )\,\| \nabla u \|^p_{L_\gamma^p(G)}.
\end{align*}
\end{samepage}
Note that $\tilde{\gamma} \gtrless p - 1$, if and only if $\gamma \gtrless p - 2$.
\end{proof}	

\begin{lemma}
Let $1<p< \infty$, $\gamma \in \R$ such that $\gamma \neq -2$ and $\rho:= |(x_1, x_2)|$.
Let $u \in L_{\textrm{\upshape loc}}^p(G)$ with $\rho \nabla u \in L^p_\gamma(G)$
such that $u(0) = 0$, if $\gamma < - 2$, and $u(\infty) = 0$, if $\gamma > - 2$, respectively.
Then we have
\begin{equation*}
\| u \|_{L^p_\gamma(G)} \leq C(p, \gamma)\,\| \rho \nabla u \|_{L^p_\gamma(G)}
\end{equation*}
with a constant $C(p, \gamma) > 0$ that is independent of $u$.
\end{lemma}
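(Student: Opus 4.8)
The plan is to deduce the estimate directly from Lemma~\ref{lemma_hardy} by shifting the weight exponent, which reduces the matter to a routine computation. Observe first that the hypothesis $\rho\nabla u\in L^p_\gamma(G)$ says precisely, by the definition of the Kondrat'ev weight, that
\[
	\|\nabla u\|_{L^p_{\gamma+p}(G)}^p=\int_G|\nabla u|^p\rho^{\gamma+p}\,d(x_1,x_2)=\int_G\rho^p|\nabla u|^p\rho^\gamma\,d(x_1,x_2)=\|\rho\nabla u\|_{L^p_\gamma(G)}^p<\infty,
\]
so that $u\in L^p_{\textrm{\upshape loc}}(G)$ with $\nabla u\in L^p_{\gamma+p}(G)$. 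I would then apply Lemma~\ref{lemma_hardy} with the parameter $\gamma$ replaced throughout by $\gamma+p$: its exceptional value becomes $\gamma+p=p-2$, i.e.\ $\gamma=-2$, which is exactly the value excluded in the present statement, while the condition that decides whether one imposes $u(0)=0$ or $u(\infty)=0$ (namely whether $\gamma+p$ is less than or greater than $p-2$) translates verbatim into whether $\gamma$ is less than or greater than $-2$, as assumed above. This gives
\[
	\|u\|_{L^p_\gamma(G)}^p=\int_G|u|^p\rho^{-p}\rho^{\gamma+p}\,d(x_1,x_2)=\|\rho^{-1}u\|_{L^p_{\gamma+p}(G)}^p\leq C(p,\gamma+p)^p\,\|\nabla u\|_{L^p_{\gamma+p}(G)}^p=C(p,\gamma+p)^p\,\|\rho\nabla u\|_{L^p_\gamma(G)}^p,
\]
which is the asserted inequality with $C(p,\gamma):=C(p,\gamma+p)$.

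Should a self-contained argument be preferred, one repeats the proof of Lemma~\ref{lemma_hardy} almost verbatim, the only change being the choice $\tilde\gamma:=\gamma+p+1$ in place of $\tilde\gamma:=\gamma+1$. Concretely, after transforming to the layer domain $\Omega$ via the diffeomorphism $\psi$ from the beginning of Section~\ref{sectionlaplaceneumann}, one writes
\[
	\|u\|_{L^p_\gamma(G)}^p=\int_0^{\theta_0}\!\int_\R e^{(\gamma+1)x}|u(\psi(x,\theta))|^p e^{x}\,dx\,d\theta,
\]
applies \eqref{ungl_hardy} fibrewise in $\theta$ with $\tilde\gamma=\gamma+p+1$, and invokes the pointwise bound $|\partial_x(u\circ\psi)(x,\theta)|=e^{x}|(\partial_r u)(\psi(x,\theta))|\leq|(\rho\nabla u)(\psi(x,\theta))|$ together with Fubini in order to transfer the integrability and the trace conditions at $0$ and $\infty$ to almost every fibre; here again $\tilde\gamma\neq p-1$ is equivalent to $\gamma\neq-2$.

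I do not expect a genuine obstacle, since the statement is an immediate corollary of Lemma~\ref{lemma_hardy}. The only delicate point, namely that the traces $u(0)$ and $u(\infty)$ are well defined under the stated hypotheses, is handled exactly as in the discussion preceding Corollary~\ref{lem_hardy_amann}, now with the weight exponent $\gamma$ there replaced by $\gamma+p$, and is in any case subsumed once Lemma~\ref{lemma_hardy} is used as a black box.
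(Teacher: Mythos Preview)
Your proposal is correct. Your primary argument --- applying Lemma~\ref{lemma_hardy} directly with the shifted weight exponent $\gamma+p$ --- is in fact cleaner than what the paper does: the paper redoes the transformation to the layer and the fibrewise application of \eqref{ungl_hardy} from scratch with $\tilde\gamma=\gamma+p+1$, which is exactly your ``self-contained'' alternative. Your black-box reduction avoids this repetition, and the translation of the exceptional value ($\gamma+p=p-2\Leftrightarrow\gamma=-2$) and of the trace conditions is handled correctly. Either route works; yours is the more economical one.
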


\begin{proof}
Let $\tilde{\gamma} \in \R$ such that $\tilde{\gamma} \neq p-1$.
Let $v \in L^p_{\textrm{\upshape loc}}(0, \infty)$ with $ v' \in L^p_{\tilde{\gamma}}(0, \infty)$
such that $v(0) = 0$, if $\tilde{\gamma} < p - 1$, and $v(\infty) = 0$, if $\tilde{\gamma} > p - 1$, respectively.
Then as above by Lemma~\ref{lem_hardy_amann} we obtain \eqref{ungl_hardy}.

Now, let $ u \in L^p_{\textrm{\upshape loc}}(G) $ with $\rho \nabla u \in L^p_\gamma(G)$
i.\,e.\ $\rho^{1 + \frac{\gamma}{p}}\nabla u \in L^p(G)$,
such that $u(0) = 0$, if $\gamma < - 2$, and $u(\infty) = 0$, if $\gamma > - 2$, respectively,
and set $\tilde{\gamma} := \gamma + p + 1$.
Then the above calculation implies
\begin{align*}
\| u \|^p_{L^p_\gamma(G)} &= \int_{G} |u(x_1, x_2)|^p \rho^{\gamma} d(x_1, x_2) = \int_0^{\theta_0} \int_\R e^{\gamma x} | u(\psi (x, \theta))|^p e^{2x} dx  d \theta \\[0.5em]
&= \int_0^{\theta_0} \int_\R e^{(\gamma + 1) x} |u(\psi (x, \theta))|^p e^x dx d\theta \\[0.5em]
& \leq  C(p, \gamma + p + 1) \int_0^{\theta_0} \int_\R e^{(\gamma + p + 1)x} | \nabla u(\psi(x, \theta)) |^p e^x dx d\theta\\[0.5em]
& =C(p, \gamma + p + 1) \int_{0}^{\theta_0} \int_\R e^{(\gamma + p) x} | \nabla u(\psi (x, \theta))|^p e^{2x} dx d\theta \\[0.5em]
& = C(p, \gamma + p + 1) \int_G \rho^{\gamma+ p} | \nabla u(x_1, x_2) |^p d(x_1, x_2) \\[1.0em]
&= C(p, \gamma + p + 1)\,\| \rho \nabla u \|^p_{L_\gamma^p(G)}.
\end{align*}
Note that $\tilde{\gamma} \gtrless p - 1$, if and only if $\gamma \gtrless - 2$.
\end{proof}	

\section{Density of Smooth Functions in $\widehat{W}^{m, p}(\Omega)$}
\label{sec_density}
In this section we prove that the space of smooth, compactly supported functions
\begin{equation*}
	C^\infty_c(\bar{\Omega}) := \big\{\,\phi|_\Omega\,:\,\phi \in C^\infty_c(\R^n)\,\big\}
\end{equation*}
is dense in the homogeneous Sobolev space $\widehat{W}^{m, p}(\Omega)$ for a large class of domains $\Omega \subseteq \R^n$ with $n \in \N$.
Our approach relies on the availability of Poincar{\'e}'s inequality for a certain family of subdomains of $\Omega$
and the controllability of the constants that appear in these inequalities.
To this end, we denote for $x \in \R^n$ and $\varepsilon > 0$ by $R_\varepsilon(x) := B_\varepsilon(x) \setminus \bar{B}_{\varepsilon / 2}(x)$
the open ring with inner and outer radius $\varepsilon / 2$ and $\varepsilon$, respectively.
Given $x \in \R^n$ and $k \in \N$ we denote by $\varphi_k(\,\cdot\,,\,x): \R^n \longrightarrow \R^n$
\begin{equation*}
	\varphi_k(y,\,x) := x + k(y - x), \qquad \qquad y \in \R^n,
\end{equation*}
a dilatation w.\,r.\,t.\ $x$.
Now, we say that a domain $\Omega \subseteq \R^n$ has the {\itshape uniform dilatation property},
if there exist $x \in \R^n$ and $r > 0$ such that
\begin{equation*}
	\varphi_k(\Omega \cap R_r(x),\,x) = \Omega \cap R_{k r}(x) \quad \textrm{for all}\ k \in \N.
\end{equation*}
For instance, if $G \subseteq \R^2$ is a wedge domain defined in \eqref{def_wedge} with opening angle $\theta_0 \in (0,\,2 \pi)$
and $\Omega \subseteq \R^2$ is a domain such that $\Omega \setminus \bar{B}_r(0) = G \setminus \bar{B}_r(0)$ for some $r > 0$,
then $\Omega$ has the uniform dilatation property (for $x = 0$).
In particular, $G$ has the uniform dilatation property (for $x = 0$ and every $r > 0$).
More generally, if $\Omega_1,\,\dots,\,\Omega_m \subseteq \R^2$ are domains such that $\Omega_k \setminus \bar{B}_r(0) = G_k \setminus \bar{B}_r(0)$ are pairwise disjoint
for some $r > 0$ and (rotated) wedge domains $G_1,\,\dots,\,G_m \subseteq \R^2$, then $\Omega = \Omega_1 \cup \ldots \cup \Omega_m$ has the uniform dilatation property.
This includes a certain class of aperture domains.
Other examples for domains that have the uniform dilatation property are given by perturbed cones in $\R^n$.
Now, the importance of this class of domains stems from the following generalized version of Poincar{\'e}'s inequality.

\begin{lemma}
\label{Uniform-Dilatation-Domain-Poincare}
Let $n \in \N$ and let $\Omega \subseteq \R^n$ be a domain.
Moreover, let $U \in \{\,B,\,R\,\}$, let $x \in \R^n$ and let $r > 0$ such that $\varphi_k(\Omega \cap U_r(x),\,x) = \Omega \cap U_{k r}(x)$ for all $k \in \N$.
Assume that $\Omega \cap U_r(x) \neq \varnothing$ is a domain that has the cone property.
Then for every $1 \leq p < \infty$ there exists a constant $C = C(p) > 0$ such that
\begin{equation*}
	\|u - \mu(u,\,\Omega \cap U_{k r}(x))\|_{L^p(\Omega\,\cap\,U_{k r}(x))} \leq k C \|\nabla u\|_{L^p(\Omega\,\cap\,U_{k r}(x))}, \quad u \in W^{1, p}(\Omega \cap U_{k r}(x)),
\end{equation*}
for all $k \in \N$.
Here, $\mu(v,\,\Omega \cap U_{k r}(x)) := |\Omega \cap U_{k r}(x)|^{-1} \int_{\Omega\,\cap\,U_{k r}(x)} v(y)\,\mbox{d}y$
denotes the mean value of a function $v \in L^1(\Omega \cap U_{k r}(x))$ for $k \in \N$.
\end{lemma}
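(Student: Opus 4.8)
The plan is to establish the inequality for $k = 1$ and then reduce the general case to it by an elementary scaling argument built on the dilatations $\varphi_k(\,\cdot\,,\,x)$. Throughout I abbreviate $\Omega_k := \Omega \cap U_{kr}(x)$, so that the hypothesis says precisely that $\varphi_k(\,\cdot\,,\,x)$ maps $\Omega_1$ onto $\Omega_k$; since $\varphi_k(\,\cdot\,,\,x)$ is an affine isomorphism of $\R^n$ (with inverse $y \mapsto x + k^{-1}(y - x)$), this restriction is a smooth diffeomorphism of $\Omega_1$ onto $\Omega_k$, hence induces by composition an isomorphism $W^{1,p}(\Omega_k) \to W^{1,p}(\Omega_1)$.

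First I would treat $k = 1$. The set $\Omega_1 = \Omega \cap U_r(x)$ is nonempty, open, connected (it is assumed to be a domain), bounded (as $U_r(x) \subseteq B_r(x)$), and has the cone property; therefore the embedding $W^{1,p}(\Omega_1) \hookrightarrow L^p(\Omega_1)$ is compact by the Rellich--Kondrachov theorem for cone-property domains. The Poincar\'e--Wirtinger inequality
\[
	\|v - \mu(v, \Omega_1)\|_{L^p(\Omega_1)} \leq C \|\nabla v\|_{L^p(\Omega_1)}, \qquad v \in W^{1,p}(\Omega_1),
\]
with $C = C(p) > 0$ then follows by the usual compactness--contradiction argument: if it failed, one picks $v_j$ with $\mu(v_j, \Omega_1) = 0$, $\|v_j\|_{L^p(\Omega_1)} = 1$ and $\|\nabla v_j\|_{L^p(\Omega_1)} \to 0$, extracts an $L^p$-convergent subsequence with limit $v$ satisfying $\nabla v = 0$, so $v$ is constant on the connected set $\Omega_1$ with zero mean, hence $v = 0$, contradicting $\|v\|_{L^p(\Omega_1)} = 1$.

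For the scaling step I would fix $k \in \N$ and $u \in W^{1,p}(\Omega_k)$ and set $v := u \circ \varphi_k(\,\cdot\,,\,x) \in W^{1,p}(\Omega_1)$, so that $\nabla v = k\,(\nabla u)\circ \varphi_k(\,\cdot\,,\,x)$. The change of variables $z = \varphi_k(y,x)$, whose Jacobian is $k^n$, gives $|\Omega_k| = k^n |\Omega_1|$, hence $\mu(v, \Omega_1) = \mu(u, \Omega_k)$, together with
\[
	\|v - \mu(v, \Omega_1)\|_{L^p(\Omega_1)}^p = k^{-n}\,\|u - \mu(u, \Omega_k)\|_{L^p(\Omega_k)}^p \qquad\text{and}\qquad \|\nabla v\|_{L^p(\Omega_1)}^p = k^{p-n}\,\|\nabla u\|_{L^p(\Omega_k)}^p.
\]
Plugging these into the $k = 1$ inequality applied to $v$ and taking $p$-th roots, the prefactors $k^{-n/p}$ and $k^{(p-n)/p} = k^{1 - n/p}$ combine to leave exactly a factor $k$ on the right-hand side, yielding the claim with the same constant $C = C(p)$ for every $k \in \N$.

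The only substantial ingredient is the $k = 1$ case, i.e.\ the Poincar\'e--Wirtinger inequality on the bounded cone-property domain $\Omega \cap U_r(x)$; everything else is bookkeeping of the scaling exponents. The point to be careful about is that the constant must be chosen once and for all for $k = 1$ and then transported unchanged, which is exactly what the homogeneity of the dilatations delivers; connectedness of $\Omega \cap U_r(x)$ (part of the hypothesis that it is a domain) is used precisely to conclude that a gradient-free $L^p$-limit is constant.
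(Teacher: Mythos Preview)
Your proof is correct and follows essentially the same route as the paper: establish the Poincar\'e--Wirtinger inequality on the base domain $\Omega \cap U_r(x)$ via the compact embedding furnished by the cone property and Rellich--Kondrachov, then transport it to $\Omega \cap U_{kr}(x)$ by pulling back along the dilatation $\varphi_k(\,\cdot\,,x)$ and tracking the scaling exponents. Your exposition is in fact slightly more detailed than the paper's, spelling out the compactness--contradiction argument and the role of connectedness where the paper simply invokes Poincar\'e's inequality as a known consequence of the compact embedding.
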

\begin{proof}
We fix $1 \leq p < \infty$.
Since $\Omega \cap U_r(x) \neq \varnothing$ is a domain that has the cone property,
it follows from the Rellich-Kondrachov theorem, see e.\,g.\ \cite[Thm.\ 6.2]{Adams-Fournier:Sobolev-Spaces},
that the inclusion $W^{1, p}(\Omega \cap U_r(x)) \subseteq L^p(\Omega \cap U_r(x))$ is compact.
Hence, Poincar{\'e}'s inequality is available and there exists a constant $C = C(p) > 0$
such that
\begin{equation*}
	\|v - \mu(v,\,\Omega \cap U_r(x))\|_{L^p(\Omega\,\cap\,U_r(x))} \leq C \|\nabla v\|_{L^p(\Omega\,\cap\,U_r(x))}
\end{equation*}
for all $v \in W^{1, p}(\Omega \cap U_r(x))$.
Now, fix $k \in \N$ and $u \in W^{1, p}(\Omega \cap U_{k r}(x))$. \pagebreak
Then we obviously have $v := u \circ \varphi_k(\,\cdot\,,\,x) \in W^{1, p}(\Omega \cap U_r(x))$.
Moreover, we have
\begin{equation*}
	\begin{array}{l}
		\mu(v,\,\Omega \cap U_r(x)) \cdot |\Omega \cap U_r(x)|
			= {\displaystyle \!\!\!\!\!\!\!\! \int\limits_{\Omega\,\cap\,U_r(x)} \!\!\!\!\!\!\!\! v(y)\,\mbox{d}y} \\[2.5em]
			\qquad = {\displaystyle k^{- n} \!\!\!\!\!\!\!\!\! \int\limits_{\Omega\,\cap\,U_{k r}(x)} \!\!\!\!\!\!\!\!\! w(y)\,\mbox{d}y}
			= k^{- n} \mu(u,\,\Omega \cap U_{k r}(x)) \cdot |\Omega \cap U_{k r}(x)|,
	\end{array}
\end{equation*}
which implies that $\mu(v,\,\Omega \cap U_r(x)) = \mu(u,\,\Omega \cap U_{k r}(x))$ in view of the scaling property
$|\Omega \cap U_r(x)| = k^{- n} |\Omega \cap U_{k r}(x)|$,
as well as
\begin{equation*}
	\begin{array}{l}
		\|v - \mu(v,\,\Omega \cap U_r(x))\|^p_{L^p(\Omega\,\cap\,U_r(x))}
			= {\displaystyle \!\!\!\!\!\!\!\! \int\limits_{\Omega\,\cap\,U_r(x)} \!\!\!\!\!\!\! |v(y) - \mu(v,\,\Omega \cap U_r(x))|^p\,\mbox{d}y} \\[2.5em]
			\qquad = {\displaystyle k^{- n} \!\!\!\!\!\!\!\!\! \int\limits_{\Omega\,\cap\,U_{k r}(x)} \!\!\!\!\!\!\!\! |w(y) - \mu(v,\,\Omega \cap U_r(x))|^p\,\mbox{d}y}
			= k^{- n} \|u - \mu(u,\,\Omega \cap U_{k r}(x))\|^p_{L^p(\Omega\,\cap\,U_{k r}(x))}
	\end{array}
\end{equation*}
and
\begin{equation*}
	\|\nabla v\|^p_{L^p(\Omega\,\cap\,U_r(x))}
		= \!\!\!\!\!\!\!\! \int\limits_{\Omega\,\cap\,U_r(x)} \!\!\!\!\!\!\! |\nabla v(y)|^p\,\mbox{d}y
		= k^{p - n} \!\!\!\!\!\!\!\!\! \int\limits_{\Omega\,\cap\,U_{k r}(x)} \!\!\!\!\!\!\!\! |\nabla w(y)|^p\,\mbox{d}y
		= k^{p - n} \|\nabla u\|^p_{L^p(\Omega\,\cap\,U_{k r}(x))}.
\end{equation*}
This yields the desired estimate.
\end{proof}

In order to control lower order by higher order derivatives,
we need to adjust the functions that shall be estimated not only by an additive constant,
but by a polynomial of a certain degree.
To this end, we fix $m \in \N$ and a bounded, open set $U \subseteq \R^n$.
Now, we fix $u \in W^{m, p}(U)$ and we define $p_0[u] := \mu(u,\,U)$.
Given $0 < \ell \leq m$ we further define $q_\ell[u](x) := \sum_{|\alpha| = \ell} \frac{1}{\alpha!} \mu(\partial^\alpha u,\,U) x^\alpha$ for $x \in \R^n$
and $p_\ell[u] := q_\ell[u] + p_{\ell - 1}[u - q_\ell[u]]$.
This way we recursively define a family of polynomials $p_0[u],\,\dots,\,p_m[u]$,
where $p_\ell[u]$ is of degree $\ell$ for $0 \leq \ell \leq m$ and
\begin{equation*}
	\mu(\partial^\alpha(u - p_\ell[u]),\,U) = 0, \qquad |\alpha| \leq \ell, \qquad 0 \leq \ell \leq m.
\end{equation*}
Indeed, the assertion is obvious for $\ell = 0$.
For $0 < \ell \leq m$ and $|\alpha| = \ell$ we have
\begin{equation*}
	\partial^\alpha(u - p_\ell[u]) = \partial^\alpha(u - q_\ell[u]) = \partial^\alpha u - \mu(\partial^\alpha u,\,U),
\end{equation*}
which yields the assertion.
Finally, for $0 < \ell \leq m$ and $|\alpha| < \ell$ we have
\begin{equation*}
	\partial^\alpha(u - p_\ell[u])
		= \partial^\alpha((u - q_\ell[u]) - p_{\ell - 1}[u - q_\ell[u]]) = \partial^\alpha(v - p_{\ell - 1}[v])
\end{equation*}
with $v := u - q_\ell[u]$, so that we obtain the assertion by an induction w.\,r.\,t.\ $\ell$.
Note that we have $p_0,\,\dots,\,p_{m - 1} \in \widehat{W}^{m, p}(\R^n)$.
Since the construction of $p_0,\,\dots,\,p_m$ depends on $U$,
we also write $p_\ell[u,\,U]$ instead of $p_\ell[u]$, if we want to emphasize, which bounded, open set the construction is based on.
Now, we obtain the following generalization of Lemma~\ref{Uniform-Dilatation-Domain-Poincare} to higher order derivatives.

\begin{lemma}
\label{Uniform-Dilatation-Domain-Poincare-Higher-Order}
Let $n \in \N$ and let $\Omega \subseteq \R^n$ be a domain.
Moreover, let $U \in \{\,B,\,R\,\}$, let $x \in \R^n$ and let $r > 0$ such that $\varphi_k(\Omega \cap U_r(x),\,x) = \Omega \cap U_{k r}(x)$ for all $k \in \N$.
Assume that $\Omega \cap U_r(x) \neq \varnothing$ is a domain that has the cone property.
Then for every $m \in \N$ and every $1 \leq p < \infty$ there exists a constant $C = C(m,\,n,\,p) > 0$ such that
\begin{equation*}
	\|\partial^\alpha(u - p_{m - 1}[u,\,\Omega \cap U_{k r}(x)])\|_{L^p(\Omega\,\cap\,U_{k r}(x))} \leq k^{m - |\alpha|} C \|\nabla^m u\|_{L^p(\Omega\,\cap\,U_{k r}(x))}
\end{equation*}
for all $u \in W^{m, p}(\Omega \cap U_{k r}(x))$, all $k \in \N$ and all $\alpha \in \N^n_0$ with $|\alpha| < m$.
\end{lemma}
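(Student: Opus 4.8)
The plan is to argue by induction on $m \in \N$, using Lemma~\ref{Uniform-Dilatation-Domain-Poincare} as the base case. For $m = 1$ one has $p_0[u,\,W] = \mu(u,\,W)$ for every bounded open set $W$, so the asserted estimate (only $|\alpha| = 0$ occurs) is exactly Lemma~\ref{Uniform-Dilatation-Domain-Poincare} for the given $U$. So assume $m \geq 2$ and that the statement of Lemma~\ref{Uniform-Dilatation-Domain-Poincare-Higher-Order} holds with $m - 1$ in place of $m$; note that the geometric hypotheses on $\Omega,\,U,\,x,\,r$ do not involve $m$, so the induction hypothesis is available for the same data.

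The key algebraic ingredient is the commutation identity
\[
	\partial_i\,p_\ell[u,\,W] = p_{\ell - 1}[\partial_i u,\,W], \qquad i = 1,\,\dots,\,n,\ \ 0 \le \ell \le m - 1,
\]
valid for every bounded open set $W$ and every $u \in W^{\ell + 1, p}(W)$, with the convention $p_{-1} \equiv 0$. This follows from the recursive definition: one first checks by a direct computation with the multinomial coefficients that $\partial_i\,q_\ell[u,\,W] = q_{\ell - 1}[\partial_i u,\,W]$, and then obtains the identity for $p_\ell$ by an induction on $\ell$ via $p_\ell[u] = q_\ell[u] + p_{\ell - 1}[u - q_\ell[u]]$. (Equivalently, for $W$ connected $p_{m-1}[u,\,W]$ is the unique polynomial of degree $\le m - 1$ with $\mu(\partial^\alpha(u - p_{m-1}[u,\,W]),\,W) = 0$ for all $|\alpha| \le m - 1$, and both sides of the displayed identity satisfy the moment conditions characterising $p_{\ell - 1}[\partial_i u,\,W]$.)

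With this at hand, fix $k \in \N$ and set $W_k := \Omega \cap U_{k r}(x)$; let $u \in W^{m,p}(W_k)$. For a multi-index $\alpha$ with $1 \le |\alpha| < m$ write $\alpha = \beta + e_i$ with $|\beta| = |\alpha| - 1 < m - 1$. Since $\partial_i u \in W^{m-1,p}(W_k)$, the commutation identity gives
\[
	\partial^\alpha\bigl(u - p_{m-1}[u,\,W_k]\bigr) = \partial^\beta\bigl(\partial_i u - p_{m-2}[\partial_i u,\,W_k]\bigr),
\]
so the induction hypothesis applied to $\partial_i u$, together with $(m-1) - |\beta| = m - |\alpha|$ and $\|\nabla^{m-1}\partial_i u\|_{L^p(W_k)} \le \|\nabla^m u\|_{L^p(W_k)}$, yields $\|\partial^\alpha(u - p_{m-1}[u,\,W_k])\|_{L^p(W_k)} \le k^{m-|\alpha|}\,C\,\|\nabla^m u\|_{L^p(W_k)}$. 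It remains to treat $\alpha = 0$: put $w := u - p_{m-1}[u,\,W_k]$, which satisfies $\mu(w,\,W_k) = 0$ by construction, so Lemma~\ref{Uniform-Dilatation-Domain-Poincare} gives $\|w\|_{L^p(W_k)} \le k\,C\,\|\nabla w\|_{L^p(W_k)}$; since each component $\partial_i w = \partial_i u - p_{m-2}[\partial_i u,\,W_k]$ was already estimated in the previous step (the case $|\alpha| = 1$), we get $\|\nabla w\|_{L^p(W_k)} \le k^{m-1}\,C\,\|\nabla^m u\|_{L^p(W_k)}$, and combining finishes the induction. Taking the maximum over $i$ and absorbing the finitely many multiplicative constants produces a constant depending only on $m$, $n$ and $p$.

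The part needing most care is the commutation identity $\partial_i\,p_\ell[u,\,W] = p_{\ell-1}[\partial_i u,\,W]$: this is exactly what lets one shed one derivative while lowering the degree of the polynomial correction in step with the order of differentiation, and it must be verified against the recursive construction of $p_0[u],\,\dots,\,p_m[u]$ (or, equivalently, against the moment characterisation of these polynomials). Everything else is a routine combination of this identity, the induction hypothesis, and Lemma~\ref{Uniform-Dilatation-Domain-Poincare}.
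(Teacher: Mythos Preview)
Your proof is correct, but it takes a different route from the paper's. The paper works directly with $v := u - p_{m-1}[u,\,W_k]$ and simply iterates the first-order Poincar\'e inequality of Lemma~\ref{Uniform-Dilatation-Domain-Poincare}: since by construction $\mu(\partial^\beta v,\,W_k) = 0$ for every $|\beta| < m$, one may apply Lemma~\ref{Uniform-Dilatation-Domain-Poincare} successively to $\partial^\alpha v$, then to each $\partial^{\alpha + e_j} v$, and so on, picking up a factor $k$ at each step until the order of differentiation reaches $m$, where $\nabla^m v = \nabla^m u$. This avoids your commutation identity $\partial_i p_\ell[u] = p_{\ell-1}[\partial_i u]$ entirely; only the mean-zero property of the derivatives of $v$ is used. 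Your approach trades that simplicity for a clean induction on $m$: the commutation identity lets you reduce the estimate for $u - p_{m-1}[u]$ to the estimate for $\partial_i u - p_{m-2}[\partial_i u]$, which is the inductive step. Both arguments are short; the paper's is a little more direct because it needs only the defining property of $p_{m-1}[u]$, whereas yours requires the extra verification of the commutation identity (which you have done correctly).
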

\begin{proof}
We fix $k \in \N$ and $u \in W^{m, p}(\Omega\,\cap\,U_{k r}(x))$ as well as $\alpha \in \N^n_0$ with $|\alpha| < m$.
Moreover, we set $q := p_{m - 1}[u,\,\Omega \cap U_{k r}(x)]$ and note that $q$ is a polynomial of degree $m - 1$ such that
\begin{equation*}
	\mu(\partial^\beta (u - q),\,\Omega \cap U_{k r}(x)) = 0, \qquad \qquad |\beta| < m.
\end{equation*}
Now, Lemma~\ref{Uniform-Dilatation-Domain-Poincare} yields
\begin{equation*}
	\|\partial^\alpha(u - q)\|_{L^p(\Omega\,\cap\,U_{k r}(x))}
		\leq k C \|\nabla \partial^\alpha(u - q)\|_{L^p(\Omega\,\cap\,U_{k r}(x))}
\end{equation*}
and if $|\alpha| = m - 1$, we have $\nabla \partial^\alpha q = 0$ so that we obtain the desired estimate.
If $|\alpha| < m - 1$, then $\beta_j := \alpha + e_j$ satisfies $|\beta_j| = |\alpha| + 1 < m$ for $j = 1,\,\dots,\,n$
and we apply Lemma~\ref{Uniform-Dilatation-Domain-Poincare} again to obtain
\begin{equation*}
	\|\partial^{\beta_j}(u - q)\|_{L^p(\Omega\,\cap\,U_{k r}(x))}
		\leq k C \|\nabla \partial^{\beta_j}(u - q)\|_{L^p(\Omega\,\cap\,U_{k r}(x))}, \qquad j = 1,\,\dots,\,n,
\end{equation*}
which yields
\begin{equation*}
	\|\partial^\alpha(u - q)\|_{L^p(\Omega\,\cap\,U_{k r}(x))}
		\leq k^2 C^\prime \|\nabla^2 \partial^\alpha(u - q)\|_{L^p(\Omega\,\cap\,U_{k r}(x))}.
\end{equation*}
Hence, if $|\alpha| = m - 2$, we obtain the desired estimate.
Otherwise, if $|\alpha| < m - 2$, we apply Lemma~\ref{Uniform-Dilatation-Domain-Poincare} again.
Then, clearly, after finitely many steps we arrive at the desired estimate.
Note that $\nabla^m q = 0$ so that the polynomial $q$ does not appear on the right-hand side within the final estimate.
\end{proof}

For the proof of Lemma~\ref{Uniform-Dilatation-Domain-Poincare} and Lemma~\ref{Uniform-Dilatation-Domain-Poincare-Higher-Order} the cone property played an important role.
For our main result we even need a slightly stronger assumption.
Thus, we say that a domain $\Omega \subseteq \R^n$ has the {\itshape regular, uniform dilatation property},
if there exist $x \in \R^n$ and $r > 0$ such that
\begin{enumerate}[(i)]
	\item $\Omega$ has the segment property,
	\item we have $\varphi_k(\Omega \cap R_r(x),\,x) = \Omega \cap R_{k r}(x)$ for all $k \in \N$,
	\item $\Omega \cap R_r(x) \neq \varnothing$ is a domain that has the cone property, and
	\item $\Omega \cap B_{k r}(x) \neq \varnothing$ is a (bounded) domain with a locally Lipschitz continuous boundary for all $k \in \N$.
\end{enumerate}
Clearly, the wedge domain $G \subseteq \R^2$ with arbitrary opening angle $\theta_0 \in (0,\,2 \pi)$ defined as in (\ref{def_wedge}) 
has the regular, uniform dilatation property (for $x = 0$ and every $r > 0$).
Now, we obtain the following approximation result.

\begin{theorem}
Let $m,\,n \in \N$, let $\Omega \subseteq \R^n$ be a domain that has the regular, uniform dilatation property and let $1 \leq p < \infty$.
Then the space $C^\infty_c(\bar{\Omega})$ is dense in $\widehat{W}^{m, p}(\Omega)$.
\end{theorem}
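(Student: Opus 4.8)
The plan is to argue in two steps and then diagonalize. First I would approximate an arbitrary $u \in \widehat{W}^{m,p}(\Omega)$ in the homogeneous seminorm by functions of class $W^{m,p}(\Omega)$ with compact support; second I would invoke the classical density theorem on domains with the segment property to replace each of these by a function in $C^\infty_c(\bar\Omega)$. Throughout I may take $x = 0$ in the definition of the regular, uniform dilatation property. The engine of the first step is the scaled higher-order Poincar\'e inequality of Lemma~\ref{Uniform-Dilatation-Domain-Poincare-Higher-Order}, applied on the annuli $\Omega \cap R_{2kr}(0)$; conditions (ii) and (iii) are precisely the hypotheses needed to apply that lemma there, condition (iv) enters only to guarantee that restrictions of $u$ to bounded subdomains genuinely lie in $W^{m,p}$, and condition (i) is used only in the second step.

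For the first step I would fix $u \in \widehat{W}^{m,p}(\Omega)$ and observe that, since $u \in L^1_{\mathrm{loc}}(\Omega)$ restricts to a distribution on the bounded Lipschitz domain $V_k := \Omega \cap B_{2kr}(0)$ (property (iv)) with $\partial^\alpha u \in L^p(V_k)$ for $|\alpha| = m$, a standard argument forces $u|_{V_k} \in W^{m,p}(V_k)$: one descends from order $m$ to order $0$ using that on a bounded Lipschitz domain a distribution whose first derivatives lie in $W^{-1,p}$ belongs to $L^p$ (Ne\v{c}as' lemma). In particular $u \in W^{m,p}(\Omega \cap R_{2kr}(0))$, so the polynomial $q_k := p_{m-1}[u,\,\Omega \cap R_{2kr}(0)]$ of degree $\le m-1$ from the construction preceding Lemma~\ref{Uniform-Dilatation-Domain-Poincare-Higher-Order} is well defined. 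Choosing cutoffs $\eta_k \in C^\infty_c(\R^n)$ with $\eta_k \equiv 1$ on $B_{kr}(0)$, $\mathrm{supp}\,\eta_k \subseteq B_{2kr}(0)$ and $\|\nabla^j \eta_k\|_\infty \le C_j (kr)^{-j}$ — so that every derivative of $\eta_k$ of positive order is supported in the ring $R_{2kr}(0) = B_{2kr}(0)\setminus\bar B_{kr}(0)$ — I would set $\phi_k := \eta_k\,(u - q_k)$, extended by zero to $\Omega$. Since $q_k$ is a polynomial and $u|_{V_k} \in W^{m,p}(V_k)$, this $\phi_k$ lies in $W^{m,p}(\Omega)$ and has compact support in $\bar\Omega$.

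It then remains to check that $\|\nabla^m(\phi_k - u)\|_{L^p(\Omega)} \to 0$. For $|\alpha| = m$, the Leibniz rule together with $\partial^\alpha q_k = 0$ gives
\[
	\partial^\alpha \phi_k - \partial^\alpha u = (\eta_k - 1)\,\partial^\alpha u + \sum_{0 \neq \gamma \le \alpha} \binom{\alpha}{\gamma}\,\partial^\gamma \eta_k\;\partial^{\alpha - \gamma}(u - q_k).
\]
The first term vanishes on $B_{kr}(0)$ and is dominated by $|\partial^\alpha u| \in L^p(\Omega)$, hence tends to $0$ in $L^p(\Omega)$ by dominated convergence. For each summand, $\partial^\gamma \eta_k$ is supported in $R_{2kr}(0)$ with $\|\partial^\gamma \eta_k\|_\infty \le C(kr)^{-|\gamma|}$, while $|\alpha - \gamma| = m - |\gamma| < m$, so Lemma~\ref{Uniform-Dilatation-Domain-Poincare-Higher-Order} (with $U = R$ and $k$ replaced by $2k$) yields
\[
	\bigl\| \partial^\gamma \eta_k\;\partial^{\alpha-\gamma}(u - q_k) \bigr\|_{L^p(\Omega)} \le C\,(kr)^{-|\gamma|}\,(2k)^{|\gamma|}\,\|\nabla^m u\|_{L^p(\Omega \cap R_{2kr}(0))} \le C'\,\|\nabla^m u\|_{L^p(\Omega \cap R_{2kr}(0))},
\]
the powers of $k$ cancelling. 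Since $\nabla^m u \in L^p(\Omega)$ and $\Omega \cap R_{2kr}(0) \subseteq \Omega \cap \{\,|x| > kr\,\}$, the right-hand side tends to $0$ as $k \to \infty$, which gives $\phi_k \to u$ in $\widehat{W}^{m,p}(\Omega)$.

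For the second step, each $\phi_k$ lies in $W^{m,p}(\Omega)$ and $\Omega$ has the segment property by (i), so \cite[Thm.\ 3.22]{Adams-Fournier:Sobolev-Spaces} provides $\psi_{k,j} \in C^\infty_c(\bar\Omega)$ with $\|\psi_{k,j} - \phi_k\|_{W^{m,p}(\Omega)} \to 0$ as $j \to \infty$, and a fortiori $\|\nabla^m(\psi_{k,j} - \phi_k)\|_{L^p(\Omega)} \to 0$; a diagonal sequence $(\psi_{k,\,j(k)})_k$ then converges to $u$ in $\widehat{W}^{m,p}(\Omega)$. I expect the genuine difficulty to lie entirely in the first step: because a function in the homogeneous space need not have lower-order derivatives in $L^p(\Omega)$, one cannot cut off $u$ directly, and the device of subtracting the annulus-adapted polynomial $q_k$ is exactly what makes the commutator derivatives $\partial^\gamma\eta_k$ of positive order act on $u - q_k$, whose lower-order derivatives are controlled on $\Omega \cap R_{2kr}(0)$ by the scaled Poincar\'e inequality; the decisive bookkeeping is that the gain $(kr)^{-|\gamma|}$ from differentiating the cutoff exactly balances the loss $(2k)^{|\gamma|}$ in Lemma~\ref{Uniform-Dilatation-Domain-Poincare-Higher-Order}. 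The only other point requiring care is the preliminary observation that $u$ is of class $W^{m,p}$ on bounded Lipschitz subdomains, for which property (iv) is used.
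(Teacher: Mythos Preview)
Your proof is correct and follows essentially the same approach as the paper's: subtract a Taylor-type polynomial adapted to the annulus where the cutoff's derivatives live, apply the scaled higher-order Poincar\'e inequality of Lemma~\ref{Uniform-Dilatation-Domain-Poincare-Higher-Order} so that the powers of $k$ cancel, and then invoke density of $C^\infty_c(\bar\Omega)$ in $W^{m,p}(\Omega)$ via the segment property. The only cosmetic difference is that you base the polynomial $q_k$ on the ring $\Omega \cap R_{2kr}(0)$ whereas the paper bases it on the ball $\Omega \cap B_{kr}(x)$; your choice actually matches the hypothesis of Lemma~\ref{Uniform-Dilatation-Domain-Poincare-Higher-Order} (with $U = R$) more directly.
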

\begin{proof}
Let $x \in \R^n$ and let $r > 0$ such that $\varphi_k(\Omega \cap R_r(x),\,x) = \Omega \cap R_{k r}(x)$ for all $k \in \N$
and such that $\Omega \cap R_r(x) \neq \varnothing$ is a domain that has the cone property
and such that $\Omega\,\cap\,B_{k r}(x) \neq \varnothing$ is a domain with a locally Lipschitz continuous boundary for all $k \in \N$.
Let $\chi \in C^\infty(\R,\,[0,\,1])$ such that $\chi|_{(- \infty, 5 r / 8]} \equiv 1$ and $\chi|_{[7 r / 8, \infty)} \equiv 0$.
For $k \in \N$ let $\chi_k \in C^\infty_c(\R^n)$ be defined as $\chi_k(y) := \chi(k |y - x|)$ for $y \in \R^n$.
Then we have $\mbox{supp}\,\chi_k \subseteq B_{k r}(x)$ and $\mbox{supp}\,\nabla \chi_k \subseteq R_{k r}(x)$
as well as $\chi_k|_{\bar{B}_{k r / 2}(x)} \equiv 1$ for all $k \in \N$.
Moreover, there exists a constant $M > 0$ such that $\|\partial^\alpha \chi_k\|_{L^\infty(\R^n)} \leq k^{|\alpha|} M$ for all $|\alpha| \leq m$ and all $k \in \N$.
\pagebreak

Now, we fix $u \in \widehat{W}^{m, p}(\Omega)$.
For $k \in \N$ we then have $u|_{\Omega\,\cap\,B_{k r}(x)} \in \widehat{W}^{m, p}(\Omega \cap B_{k r}(x))$
and $\widehat{W}^{m, p}(\Omega \cap B_{k r}(x)) = W^{m, p}(\Omega \cap B_{k r}(x))$ algebraically,
since $\Omega \cap B_{k r}(x)$ has a locally Lipschitz continuous boundary, for all $k \in \N$; cf.~\cite[Remark~II.6.1]{Galdi:Navier-Stokes}.
Hence,
\begin{equation*}
	u_k := (u - p_{m - 1}[u,\,\Omega \cap B_{k r}(x)]) \chi_k \in \widehat{W}^{m, p}(\Omega)
\end{equation*}
is well-defined with $u_k|_{\Omega\,\cap\,B_{k r}(x)} \in W^{m, p}(\Omega\,\cap\,B_{k r}(x))$ for all $k \in \N$.
By construction we have
\begin{equation*}
	\|\partial^\alpha u_k\|_{L^p(\Omega)} = \|\partial^\alpha u_k\|_{L^p(\Omega\,\cap\,B_{k r}(x))} \leq \|u_k\|_{W^{m, p}(\Omega\,\cap\,B_{k r}(x))} < \infty,
		\qquad |\alpha| \leq m,
\end{equation*}
which shows that $u_k \in W^{m, p}(\Omega)$ for all $k \in \N$.
Now, we set $q_k := p_{m - 1}[u,\,\Omega \cap B_{k r}(x)]$ for $k \in \N$.
We fix $\alpha \in \N^n_0$ with $|\alpha| = m$ and observe that $\partial^\alpha q_k \equiv 0$ for all $k \in \N$.
Using Lemma~\ref{Uniform-Dilatation-Domain-Poincare-Higher-Order} we obtain
\begin{equation*}
	\begin{array}{l}
		\|\partial^\alpha(u_k - u)\|_{L^p(\Omega)}
			\leq \|(1 - \chi_k) \cdot \partial^\alpha u\|_{L^p(\Omega)}
				+ {\displaystyle \!\! \sum_{0 < \beta \leq \alpha} \!\! {\alpha \choose \beta} \|\partial^{\alpha - \beta} (u - q_k) \cdot \partial^\beta \chi_k\|_{L^p(\Omega)}} \\[2.0em]
			\qquad \leq \|\partial^\alpha u\|_{L^p(\Omega \setminus \bar{B}_{k r / 2}(x))}
				+ {\displaystyle \!\! \sum_{0 < \beta \leq \alpha} \!\! {\alpha \choose \beta} \|\partial^{\alpha - \beta} (u - q_k) \cdot \partial^\beta \chi_k\|_{L^p(\Omega\,\cap\,R_{k r}(x))}} \\[2.0em]
			\qquad \leq \|\partial^\alpha u\|_{L^p(\Omega \setminus \bar{B}_{k r / 2}(x))}
				+ {\displaystyle \!\! \sum_{0 < \beta \leq \alpha} \!\! {\alpha \choose \beta} k^{m - |\alpha - \beta|} C \|\nabla^m u\|_{L^p(\Omega\,\cap\,R_{k r}(x))} \cdot k^{|\beta|} M} \\[2.0em]
			\qquad \leq \|\partial^\alpha u\|_{L^p(\Omega \setminus \bar{B}_{k r / 2}(x))}
				+ {\displaystyle C M \!\! \sum_{0 < \beta \leq \alpha} \!\! {\alpha \choose \beta} \|\nabla^m u\|_{L^p(\Omega \setminus \bar{B}_{k r / 2}(x))}} \rightarrow 0 \quad \textrm{as}\ k \rightarrow \infty,
	\end{array}
\end{equation*}
since $\partial^\beta u \in L_p(\Omega)$ for all $\beta \in \N^n_0$ with $|\beta| = m$.
This shows that $u_k \rightarrow u$ in $\widehat{W}^{1, p}(\Omega)$ as $k \rightarrow \infty$.

Finally, since $\Omega$ has the segment property, for every $k \in \N$
there exists $v_k \in C^\infty_c(\bar{\Omega})$ such that $\|v_k - u_k\|_{W^{m, p}(\Omega)} < 2^{- k}$.
Therefore, we obtain
\begin{equation*}
	\|v_k - u\|_{\widehat{W}^{m, p}(\Omega)}
		\leq \|v_k - u_k\|_{W^{m, p}(\Omega)} + \|u_k - u\|_{\widehat{W}^{m, p}(\Omega)} \rightarrow 0 \quad \textrm{as}\ k \rightarrow \infty
\end{equation*}
and the proof is complete.
\end{proof}

\begin{corollary}
\label{Homogeneous-Density-Wedge}
Let $G \subseteq \R^2$ be the wedge domain with opening angle $\theta_0 \in (0, 2 \pi)$ defined as in (\ref{def_wedge}) 
and let $m \in \N$ and $1 \leq p < \infty$.
Then the space $C^\infty_c(\bar{G})$ is dense in $\widehat{W}^{m, p}(G)$.
\end{corollary}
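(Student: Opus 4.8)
The plan is to deduce the statement directly from the preceding theorem by checking that the wedge domain $G$ possesses the regular, uniform dilatation property; all of its defining conditions (i)--(iv) are verified with the choice $x = 0$ and an arbitrary radius $r > 0$.

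First I would record that $G$ is a Lipschitz domain for every $\theta_0 \in (0, 2\pi)$: away from the corner its boundary is a union of two open rays and hence smooth, while near the corner $\partial G$ is, after a rotation, the graph of a Lipschitz function if $\theta_0 \neq \pi$ (for $\pi < \theta_0 < 2\pi$ one views $G$ locally as the complement of a convex wedge of opening angle $2\pi - \theta_0 < \pi$) and a hyperplane if $\theta_0 = \pi$; in particular $G$ has the segment property, which is condition~(i). Condition~(ii), namely $\varphi_k(G \cap R_r(0),\,0) = G \cap R_{kr}(0)$ for all $k \in \N$, is immediate since $G$ is a cone with vertex at the origin and $\varphi_k(\,\cdot\,,0)$ is multiplication by $k$, which carries the ring $R_r(0)$ onto $R_{kr}(0)$. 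For condition~(iii) I note that $G \cap R_r(0)$ is the nonempty annular sector $\{\,(\varrho \cos \theta,\,\varrho \sin \theta)\,:\,r/2 < \varrho < r,\ 0 < \theta < \theta_0\,\}$, a bounded Lipschitz domain, hence a domain with the cone property. Finally, $G \cap B_{kr}(0)$ is the bounded circular sector $\{\,(\varrho \cos \theta,\,\varrho \sin \theta)\,:\,0 < \varrho < kr,\ 0 < \theta < \theta_0\,\}$, again a bounded, locally Lipschitz domain for all $k \in \N$, which is condition~(iv).

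With all hypotheses verified, it remains only to apply the preceding theorem with $n = 2$ and $\Omega = G$ in order to conclude that $C^\infty_c(\bar{G})$ is dense in $\widehat{W}^{m, p}(G)$. There is no substantial obstacle in this argument: the real work is contained in Lemma~\ref{Uniform-Dilatation-Domain-Poincare}, Lemma~\ref{Uniform-Dilatation-Domain-Poincare-Higher-Order} and the theorem above, and the only point that deserves (a little) care is the verification that the non-convex wedges with $\theta_0 \in (\pi, 2\pi)$ are still Lipschitz, which has been addressed above.
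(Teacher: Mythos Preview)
Your proposal is correct and follows exactly the approach intended by the paper: the corollary is an immediate consequence of the preceding theorem once one observes that the wedge $G$ has the regular, uniform dilatation property for $x = 0$ and any $r > 0$, which the paper states without proof just before the theorem. Your explicit verification of conditions~(i)--(iv), including the remark on the Lipschitz character of non-convex wedges, simply fills in the details behind the paper's word ``Clearly''.
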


\bibliographystyle{plain}
\bibliography{references}

\begin{thebibliography}{10}

\bibitem{Adams-Fournier:Sobolev-Spaces}
R.~A. Adams and J.~J.~F. Fournier.
\newblock {\em {S}obolev {S}paces}, volume 140 of {\em Pure and Applied
  Mathematics}.
\newblock Academic Press, 2 edition, 2003.

\bibitem{Amann:Parabolic-Problems-2}
H.~Amann.
\newblock {\em {L}inear and {Q}uasilinear {P}arabolic {P}roblems. {V}olume
  {II}: {F}unction {S}paces}, volume 106 of {\em Monographs in Mathematics}.
\newblock Birkh{\"a}user, 2019.

\bibitem{Dauge:Stationary-Stokes-With-Corners}
M.~Dauge.
\newblock {S}tationary {S}tokes and {N}avier-{S}tokes {S}ystems on {T}wo- or
  {T}hree-{D}imensional {D}omains with {C}orners. {P}art {I}. {L}inearized
  {E}quations.
\newblock {\em SIAM J. Math. Anal.}, 20(1):74--97, 1989.

\bibitem{Denk-Hieber-Pruess:Maximal-Regularity}
R.~Denk, M.~Hieber, and J.~Pr{\"u}{\ss}.
\newblock {\em $\mathcal{R}$-{B}oundedness, {F}ourier-{M}ultipliers and
  {P}roblems of {E}lliptic and {P}arabolic {T}ype}, volume 166 of {\em Mem.
  Amer. Math. Soc.}
\newblock American Mathematical Society, 2003.

\bibitem{Deuring:Stokes-With-Conical-Boundary-Points}
P.~Deuring.
\newblock ${L}^p$-{T}heory for the {S}tokes {S}ystem in 3{D} {D}omains with
  {C}onical {B}oundary {P}oints.
\newblock {\em Indiana Univ. Math. J.}, 47(1):11--47, 1998.

\bibitem{Duong:Functional-Calculus-Elliptic-Operators}
X.~T. Duong.
\newblock ${H}_\infty$ {F}unctional {C}alculus of {E}lliptic {O}perators with
  ${C}^\infty$ {C}oefficients on ${L}^p$ {S}paces of {S}mooth {D}omains.
\newblock {\em J. Austra. Math. Soc. (Series A)}, 48:113--123, 1990.

\bibitem{Farwig-Rosteck:Stokes-Navier-Boundary-Conditions}
R.~Farwig and V.~Rosteck.
\newblock {R}esolvent {E}stimates for the {S}tokes {S}ystem with {N}avier
  {B}oundary {C}onditions in {G}eneral {U}nbounded {D}omains.
\newblock {\em Adv. Differ. Equ.}, 21(5--6):401--428, 2016.

\bibitem{Galdi:Navier-Stokes}
G.~P. Galdi.
\newblock {\em {A}n {I}ntroduction to the {M}athematical {T}heory of the
  {N}avier-{S}tokes {E}quations. {S}teady-{S}tate {P}roblems}.
\newblock Springer, 2nd edition, 2011.

\bibitem{Grisvard:Elliptic-Problems}
P.~Grisvard.
\newblock {\em {E}lliptic {P}roblems in {N}onsmooth {D}omains}.
\newblock SIAM, 2011.

\bibitem{Guo-Schwab:Stokes-Polygonal-Domains}
B.~Guo and C.~Schwab.
\newblock {A}nalytic {R}egularity of {S}tokes {F}low on {P}olygonal {D}omains
  in {C}ountably {W}eighted {S}obolev {S}paces.
\newblock {\em J. Comput. Appl. Math.}, 190(1--2):487--519, 2006.

\bibitem{Hieber-Saal:Stokes-Equation}
M.~Hieber and J.~Saal.
\newblock {T}he {S}tokes {E}quation in the ${L}^p$-{S}etting:
  {W}ell-{P}osedness and {R}egularity {P}roperties.
\newblock In Y.~Giga and A.~Novotn{\'y}, editors, {\em {H}andbook of
  {M}athematical {A}nalysis in {M}echanics of {V}iscous {F}luids}, pages
  117--206. Springer International Publishing, 2018.

\bibitem{Hobus-Saal:Partial-Slip}
P.~Hobus and J.~Saal.
\newblock {S}tokes and {N}avier-{S}tokes {E}quations {S}ubject to {P}artial
  {S}lip on {U}niform ${C}^{2,1}$-{D}omains in ${L}^q$-{S}paces.
\newblock {\em J. Differential Equations}, 284:374--432, 2021.

\bibitem{Kalton-Weis:Operator-Sums}
N.~J. Kalton and L.~Weis.
\newblock {T}he ${H}^\infty$-{C}alculus and {S}ums of {C}losed {O}perators.
\newblock {\em Math. Ann.}, 321:319--345, 2001.

\bibitem{Kellogg-Osborn:Stokes-Convex-Polygon}
R.~B. Kelloog and J.~E. Osborn.
\newblock {A} {R}egularity {R}esult for the {S}tokes {P}roblem in a {C}onvex
  {P}olygon.
\newblock {\em J. Funct. Anal.}, 21(4):397--431, 1976.

\bibitem{Koehne-Saal-Westermann:Stokes-Wedge}
M.~K{\"o}hne, J.~Saal, and L.~Westermann.
\newblock {O}ptimal {S}obolev {R}egularity for the {S}tokes {E}quations on a
  $2{D}$ {W}edge {D}omain.
\newblock {\em Math. Ann.}, 379(1--2):377--413, 2021.

\bibitem{Kondratev:Elliptic-Equations-Conical-Angular-Points}
V.~A. Kondrat'ev.
\newblock {B}oundary {V}alue {P}roblems for {E}lliptic {E}quations in {D}omains
  with {C}onical or {A}ngular {P}oints.
\newblock {\em Trans. Moscow Math. Soc.}, 16:227--313, 1967.

\bibitem{Kozlov-Rossmann:Nonstationary-Stokes-Cone}
V.~Kozlov and J.~Rossmann.
\newblock {O}n the {N}onstationary {S}tokes {S}ystem in a {C}one.
\newblock {\em J. Differential Equations}, 260(12):8277--8315, 2016.

\bibitem{Maier-Saal:Stokes-Wedge}
S.~Maier and J.~Saal.
\newblock {S}tokes and {N}avier {S}tokes-{E}quations with {P}erfect {S}lip on
  {W}edge {T}ype {D}omains.
\newblock {\em Discrete Contin. Dyn. Syst. Ser. S}, 7(5):1045--1063, 2014.

\bibitem{Mazya-Rossmann:Elliptic-Equations}
V.~G. Maz'ya and J.~Rossmann.
\newblock {\em {E}lliptic {E}quations in {P}olyhedral {D}omains}, volume 162 of
  {\em Mathematical Surveys and Monographs}.
\newblock American Mathematical Society, 2010.

\bibitem{Monniaux-Shen:Stokes-Irregular-Domains}
S.~Monniaux and Z.~Shen.
\newblock {S}tokes {P}roblems in {I}rregular {D}omains with {V}arious
  {B}oundary {C}onditions.
\newblock In Y.~Giga and A.~Novotn{\'y}, editors, {\em {H}andbook of
  {M}athematical {A}nalysis in {M}echanics of {V}iscous {F}luids}, pages
  207--248. Springer International Publishing, 2018.

\bibitem{Nau-Saal:Cyindrical-Boundary-Value-Problems}
T.~Nau and J.~Saal.
\newblock ${H}^\infty$-{C}alculus for {C}ylindrical {B}oundary {V}alue
  {P}roblems.
\newblock {\em Adv. Differential Equations}, 17(7--8):767--800, 2012.

\bibitem{Nazarov:Heat-Equation}
A.~I. Nazarov.
\newblock ${L}_p$-{E}stimates for a {S}olution to the {D}irichlet {P}roblem and
  to the {N}eumann {P}roblem for the {H}eat {E}quation in a {W}edge with {E}dge
  of {A}rbitrary {C}odimension.
\newblock {\em J. Math. Sci.}, 106(3):2989--3014, 2001.

\bibitem{Kunstmann-Weis:Parabolic-Equations}
L.~Weis P.~C.~Kunstmann.
\newblock {M}aximal ${L}_p$-{R}egularity for {P}arabolic {E}quations, {F}ourier
  {M}ultiplier {T}heorems and ${H}^\infty$-{F}unctional {C}alculus.
\newblock In M.~Iannelli, R.~Nagel, and S.~Piazzera, editors, {\em {F}unctional
  {A}nalytic {M}ethods for {E}volution {E}quations}, volume 1855 of {\em
  Lecture Notes in Mathematics}, pages 65--311. Springer, 2004.

\bibitem{Shibata-Shimada:Stokes-Robin-Resolvent-Estimate}
Y.~Shibata and R.~Shimada.
\newblock {O}n a {G}eneralized {R}esolvent {E}stimate for the {S}tokes {S}ystem
  with {R}obin {B}oundary {C}onditions.
\newblock {\em J. Math. Soc. Japan}, 59(2):469--519, 2007.

\bibitem{Weis:Maximal-Regularity}
L.~Weis.
\newblock {O}perator-{V}alued {F}ourier {M}ultiplier {T}heorems and {M}aximal
  ${L}_p$-{R}egularity.
\newblock {\em Math. Ann.}, 319(4):735--758, 2001.

\bibitem{Westermann:Stokes-Equations}
L.~Westermann.
\newblock {\em {O}ptimal {R}egularity for the {S}tokes {E}quations on a 2{D}
  {W}edge {D}omain {S}ubject to {P}erfect {S}lip, {D}irichlet and {N}avier
  {B}oundary {C}onditions}.
\newblock Shaker Verlag, 2021.

\end{thebibliography}


\end{document}